\documentclass[bj,authoryear]{imsart}

\usepackage{apalike}
\usepackage{xcolor}
\usepackage{booktabs}
\usepackage{multirow}
\usepackage{float}

\startlocaldefs
\numberwithin{equation}{section}
\theoremstyle{plain}
\newtheorem{theo}{Theorem}[section]
\newtheorem{prop}{Proposition}[section]
\newtheorem{lem}{Lemma}[section]
\theoremstyle{definition}

\newtheorem{remark}{Remark}

\newcommand{\p}{\mathbb{P}}

\endlocaldefs

\begin{document}

\begin{frontmatter}
\title{Cram\'{e}r-type moderate deviation for double index permutation statistics}
\runtitle{Cram\'{e}r-type moderate deviation for double index permutation statistics}

\begin{aug}
\author[A]{\inits{SH Liu}\fnms{Song-hao}~\snm{Liu}\ead[label=e1]{liusonghao@dlut.edu.cn}}
\author[B,C]{\inits{QM Shao}\fnms{Qi-man}~\snm{Shao}\ead[label=e2]{shaoqm@sustech.edu.cn}}
\author[C]{\inits{JY Xu}\fnms{Jing-yu}~\snm{Xu}\ead[label=e3]{12131253@mail.sustech.edu.cn}}
\address[A]{School of Mathematical Sciences, Dalian University of Technology, Dalian, Liaoning, China\printead[presep={,\ }]{e1}}
\address[B]{Department of Statistics and Data Science, Shenzhen International Center for Mathematics, Southern University of Science and Technology, Shenzhen, Guangdong, 518055, China\printead[presep={,\ }]{e2}}
\address[C]{Department of Statistics and Data Science, Southern University of Science and Technology, Shenzhen, Guangdong, 518055, China\printead[presep={,\ }]{e3}}
\end{aug}

\begin{abstract}

We establish a Cram\'{e}r-type moderate deviation theorem for double-index permutation statistics (DIPS). To the best of our knowledge, previous results only provided Berry-Esseen type bounds for DIPS, which cannot yield moderate deviation results and are insufficient to capture the optimal convergence rates for some relatively sparse DIPS. Our result overcome these limitations: it not only recover the optimal convergence rates for classical DIPS, such as the Mann-Whitney-Wilcoxon statistic, but also extend to sparse statistics, including the number of descents in permutations and Chatterjee's rank correlation coefficient, for which previous approaches do not apply. To prove this result, we establish a Cram\'{e}r-type moderate deviation of normal approximation for bounded exchangeable pairs. Compared with existing results, our theorem requires more easily verifiable conditions.

\end{abstract}

\begin{keyword}
\kwd{Stein's Method}
\kwd{exchangeable pair approach}
\kwd{Cram\'{e}r-type moderate deviation}
\kwd{double index permutation statistics}
\end{keyword}

\end{frontmatter}
\section{Introduction}
Let $\{\xi(i,j,k,l)\}_{i,j,k,l\in [n]}$ be a 4-index real number array. We are interested in the double-indexed permutation statistics (DIPS) of the general form 
\begin{align}
    \sum_{i,j}\xi(i,j,\pi(i),\pi(j)),\notag
\end{align}
where $\pi$ is a random permutation chosen uniformly from $S_n$ (the symmetric group of degree $n$). The DIPS of the restricted form $\sum_{i,j}a_{ij}b_{\pi(i)\pi(j)}$ was first investigated by \cite{daniels1944relation} in the study of the generalized correlation coefficient, with Kendall's $\tau$ and Spearman's $\rho$ being special cases. Daniels gave a set of sufficient conditions for their asymptotic normality as $n\to \infty$. Later, various results under weakened conditions were introduced in \cite{Bloemena1976SamplingFA}, \cite{jogdeo1968asymptotic}, \cite{abe1969central}, \cite{shapiro1979asymptotic}, \cite{barbour1986random} and \cite{pham1989asymptotic}. The use of DIPS has diversely been suggested by \cite{friedman1979multivariate} (this paper is the start of graph-based tests), \cite{friedman1983graph}, \cite{schilling1986multivariate} and \cite{vigna2015weighted} in graph-based tests, by \cite{hubert1976quadratic} in clustering studies, by \cite{cliff1981spatial} in geography, by \cite{chatterjee2021new} and \cite{shi2022distribution} in testing dependence.

The asymptotic properties of DIPS have also been widely studied by people. Using Stein's method, \cite{zhao1997error} proved a Berry-Esseen type theorem for general form DIPS. However their results do not apply to statistics such as the number of descents, the number of inversions of permutation, or Chatterjee's rank correlation coefficient, which appear to be "too sparse". In contrast, by constructing a special exchangeable pair, \cite{fulman2004stein} obtained convergence rate of order $n^{-1/2}$ in the Kolmogorov metric for both the number of descents and the number of inversions of permutation. Unfortunately, their method cannot be applied to general form DIPS. Later, \cite{fang2015rates} extended \cite{fulman2004stein}'s result to the multivariate setting and remove a certain condition arising from the requirement of exchangeability. Nevertheless, both \cite{fulman2004stein} and \cite{fang2015rates} results address only certain special cases of DIPS and are not applicable to the general setting. Moreover, none of the existing works provide Cram\'{e}r-type moderate deviation results for DIPS.

Berry-Esseen bound describe the absolute error for distributional approximation while the Cram\'{e}r-type moderate deviation describes the relative error. More precisely, let $\{Y_i\}_{i = 1}^{n}$ be a sequence of random variables that converge to $Y$ in distribution, the Cram\'{e}r-type moderate deviation is 
\begin{align}
    \frac{\mathbb{P}(Y_n>x)}{\mathbb{P}(Y>x)} = 1+\text{error term} \to 1,\notag
\end{align}
for $0\leq x\leq a_n$, where $a_n\to \infty$. Specially, for the normalized sum of i.i.d. random variables with finite moment generating functions, the range $0\leq x\leq n^{1/6}$ and the order of the error term $n^{-1/2}(1+x^3)$ are optimal, refer to \cite{petrov2012sums} for details. 

The purpose of this paper is to establish a Cram\'{e}r-type moderate deviation theorem for double-indexed permutation statistics (DIPS) of the general form $\sum_{i,j}\xi(i,j,\pi(i),\pi(j))$ with an optimal convergence rate. We hope our results can not only be used to yield the optimal convergence rate for some well-known statistics, such as Kendall's $\tau$, Spearman's $\rho$ and the Mann-Whitney-Wilcoxon statistic, but also for some 'sparse' statistics such as the number of descents and the number of inversions of permutations and Chatterjee's rank correlation coefficient. To achieve this goal, we apply Stein's method and the exchangeable pair approach to the above statistics.

Stein's method was first introduced by \cite{stein1972bound}, an introduction and a survey of Stein's method can be found in \cite{chen2010normal}. The exchangeable pair approach of Stein's method is a powerful tool for estimating the convergence rates for distributional approximation. \cite{chen2013stein} developed the method to prove Cram\'{e}r-type moderate deviation results in normal approximation without skewness correction for dependent random variables under a boundedness condition. \cite{chen2013moderate} and \cite{shao2021cramer} considered Poisson approximation and nonnormal approximations, respectively. \cite{zhang2023cramer} refined the results in \cite{chen2013stein} by relaxing the boundedness condition. 

For exchangeable pair approach, let $W$ be the random variable of interest, and we say $(W,W^{\prime})$ an exchangeable pair if $(W,W^{\prime})\stackrel{d}{=} (W^{\prime},W)$. Let $\Delta = W - W^{\prime}$. It is often to assume that (see, e.g., \cite{rinott1997coupling}) there exits a constant $\lambda >0$ and a random variable $R$ such that
\begin{align}
    \mathbb{E}\{\Delta|W\} = \lambda(W+R).\notag
\end{align}
but unfortunately, this assumption is not satisfied for general DIPS. \cite{zhang2023cramer} refined this assumption, assuming that there exits $D:= \Psi(W,W^{\prime})$ which is an antisymmetric function satisfying $\mathbb{E}(D \mid W)=\lambda(W+R)$. So we can construct a suitable $D$ for general DIPS and use the exchangeable pair approach to derive a Cram\'{e}r-type moderate deviation.


By employing the powerful tools mentioned above, we establish a Cram\'{e}r-type moderate deviation result for bounded exchangeable pairs in Theorem \ref{thm-general-MD}, which can be viewed as a special case of the result obtained in \cite{zhang2023cramer}. The key distinction is that we optimize the proof so that our theorem removes a technical condition in \cite{zhang2023cramer} which is not easy to verify in practice. Building on this result, we further derive a Cram\'{e}r-type moderate deviation result for doubly-indexed permutation statistics in Theorem \ref{thm-general-double-index-permutation-statistics-a-b-form}. This theorem overcomes the limits of former results, it applies not only to classical DIPS but also to relatively sparse ones, and in both cases we are able to achieve the optimal convergence rate.

The rest of this paper is organized as follows. In Section 2, we present our main result Theorem \ref{thm-general-double-index-permutation-statistics-a-b-form}, a Cram\'{e}r-type moderate deviation theorem for double index permutation statistics. In Section 3, we provide applications of our results to some well-known statistics such as Mann-Whitney-Wilcoxon statistic, and some other "sparse" statistics such as the number of descents and inversions of permutations and Chatterjee's rank correlation coefficient. In Section 4, we prove the general bound and the application.

\section{Main Results}
Let $\{\xi(i,j,k,l)\}_{i,j,k,l\in[N]}$ be real numbers, and the double indexed permutation statistic (DIPS) is defined as
\begin{align}\label{general_DIPS}
    DIPS = \sum_{i,j = 1}^{n}\xi(i,j,\pi(i),\pi(j)),
\end{align} 
where $\pi$ is a random permutation chosen uniformly from $S_{n}$ (symmetric group of degree $n$). Inspired by the proof of the Combinatorial Central Limit Theorem, \cite{zhao1997error} converted the general form of DIPS (\ref{general_DIPS}) to the form of $\sum_{i = 1}^{n}a(i,\pi(i))+\sum_{i,j}^{\prime}b(i,j,\pi(i),\pi(j))$, \footnote{Throughout this paper, $\sum_{i,j}^{\prime}$ denotes $\sum_{i,j,i\neq j}$.} where $\{a(i,k)\}_{i,k\in[N]}$ is a real matrix and $\{b(i,j,k,l)\}_{i,j,k,l\in[N]}$ is a 4-index real array. We also use this normalized form of DIPS and consider some boundedness conditions that hold true for most cases. For the sake of simplicity in writing, some notations we use in this article are as follows
\begin{align}
    a(i,\cdot) = \frac{1}{n}\sum_{k}a(i,j),\quad &a(\cdot,\cdot) = \frac{1}{n^2}\sum_{i,k}a(i,k),\notag \\
    b(i,j,k,\cdot) = \frac{1}{n}\sum_{l}b(i,j,k,l), \quad &b(i,j,\cdot,\cdot) = \frac{1}{n^2}\sum_{k,l}b(i,j,k,l),\notag\\
    b(i,\cdot,\cdot,\cdot) = \frac{1}{n^3}\sum_{j,k,l}b(i,j,k,l),\quad &b(\cdot,\cdot,\cdot,\cdot) = \frac{1}{n^4}\sum_{i,j,k,l}b(i,j,k,l),\notag
\end{align}

\begin{prop}\label{general-DIPS-convert}
By a suitable normalization, the general DIPS (\ref{general_DIPS}) can be converted into one of the following forms
\begin{align}
    W_n =& \sum_{i}a(i,\pi(i))+{\sum_{i,j}}^{\prime}b(i,j,\pi(i),\pi(j)), \label{normalized-DIPS-a-b-form} 
\end{align}
where $\{a(i,k)\}_{i,k\in[N]}$ is a real number matrix and $\{b(i,j,k,l)\}_{i,j,k,l\in[N]}$ is a 4-index real number array, and $\{a(i,k)\}_{i,k\in[N]}$ satisfies
\begin{equation}\label{a=0 or not}
    a(i,k)\equiv0 \quad \text{ or } \quad \sum_{i,j}a^2(i,k) = n-1,
\end{equation}
and
\begin{align}
    a(i,\cdot) = a(\cdot,k) = 0, \quad \label{equ-a-condition-origin}
    \end{align}
    and $\{b(i,j,k,l)\}_{i,j,k,l\in[N]}$ satisfies
    \begin{align}
        b(i,j,k\cdot) = b(i,j,\cdot,l) = b(i,\cdot,k,l) = b(\cdot,j,k,l) = 0, \label{equ-b-condition-origin}
\end{align}
no matter what $\{a(i,k)\}_{i,k\in[N]}$ is.
\end{prop}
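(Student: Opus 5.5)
The plan is a Hoeffding/ANOVA-type decomposition of the kernel with respect to the uniform permutation, in the spirit of the combinatorial central limit theorem (as in \cite{zhao1997error}). First I separate the diagonal: $\sum_i \xi(i,i,\pi(i),\pi(i)) =: \sum_i c(i,\pi(i))$ is a single-index statistic, and the off-diagonal part is $\sum_{i,j}^{\prime}\xi(i,j,\pi(i),\pi(j))$. For the off-diagonal kernel I define the fully doubly centered array
\begin{align}
 b_0(i,j,k,l) = \xi(i,j,k,l) - \xi(i,j,k,\cdot) - \xi(i,j,\cdot,l) - \xi(i,\cdot,k,l) - \xi(\cdot,j,k,l) + \cdots,\notag
\end{align}
built by inclusion--exclusion over the four averaging operators. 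The averages are taken over $j\neq i$, $l\neq k$, so that they match the joint law of $(\pi(i),\pi(j))$ for $i\neq j$, namely uniform on ordered pairs of distinct values. Each averaging operator is then an idempotent projection, and $b_0$ satisfies (\ref{equ-b-condition-origin}). The removed pieces depend on at most three of the four indices. The terms depending on $(i,k)$ only, on $(j,l)$ only, on $(i,l)$, on $(j,k)$, or on one index, when summed over $i\neq j$ with $k=\pi(i)$, $l=\pi(j)$, collapse into single-index sums $\sum_i \tilde a(i,\pi(i))$ plus deterministic constants. This uses $\sum_{j\neq i} f(\pi(j)) = \sum_l f(l) - f(\pi(i))$.

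Next I collect all single-index contributions into a matrix $\tilde a(i,k)$ and center it in the usual Hoeffding way:
\begin{align}
 a_0(i,k) = \tilde a(i,k) - \tilde a(i,\cdot) - \tilde a(\cdot,k) + \tilde a(\cdot,\cdot).\notag
\end{align}
Then $\sum_i \tilde a(i,\pi(i)) = \sum_i a_0(i,\pi(i)) + \text{const}$, and $a_0$ satisfies (\ref{equ-a-condition-origin}). The remaining step is normalization. If $a_0\equiv 0$, I set $a=0$ and rescale $b_0$ by the standard deviation of the DIPS, using a standardization convenient for later sections. Otherwise I set $\sigma_a^2 = \sum_{i,k} a_0^2(i,k)/(n-1)$ and divide the whole centered statistic (minus its mean) by $\sigma_a$, taking $a = a_0/\sigma_a$ and $b = b_0/\sigma_a$. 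This gives (\ref{a=0 or not}). The variance of $\sum_i a(i,\pi(i))$ is then exactly $\frac{1}{n-1}\sum a^2 = 1$. Linear rescaling and shifting preserve the centering conditions, which is why the proposition says "by a suitable normalization."

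I expect the main obstacle to be the bookkeeping in the inclusion--exclusion for $b$. The three-index marginals, for instance $\xi(i,j,k,\cdot)$, produce terms that depend on two positions and only one value. These are not immediately of single-index form, so I must check that they really reduce to an $a$-part plus a remainder that can itself be recentered into $b$. Averaging over $j\neq i$ versus over all $j$ also creates $O(1/n)$ correction terms. I would handle both issues by defining $b$ as the orthogonal projection of the off-diagonal kernel onto the subspace annihilated by all four averaging operators (with the distinct-index averages). I would then verify, via $\sum_{j\neq i}$ identities, that the residual $\xi - b$ evaluated along $\pi$ is a linear permutation statistic plus a constant. I would iterate the recentering once if needed. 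Because the operators commute, a single application suffices.
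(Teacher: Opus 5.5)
\textbf{Gap: the distinct-index averaging operators do not commute.} Your explicit construction fails at its central claim. The four \emph{distinct-index} averages you use are conditional expectations under the uniform law on $\{i\neq j\}\times\{k\neq l\}$. Under that law $k$ and $l$ are dependent, and likewise $i$ and $j$, so the $k$-average and the $l$-average do not commute. Idempotence alone does not make the inclusion--exclusion array annihilated by all four operators, and your closing sentence ``because the operators commute, a single application suffices'' is false.

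A one-line check: take $\xi(i,j,k,l)=g(l)$, which is purely single-index, and set $G=\sum_l g(l)$. Then
\begin{itemize}
\item $\xi(k,\cdot)=\frac{1}{n-1}\sum_{l'\neq k}g(l')=\frac{G-g(k)}{n-1}$,
\item $\xi(\cdot,l)=g(l)$ and $\xi(\cdot,\cdot)=G/n$,
\end{itemize}
so $b_0(k,l)=\frac{g(k)-G/n}{n-1}\neq 0$ and $\sum_{l\neq k}b_0(k,l)=g(k)-G/n\neq 0$. Thus $b_0$ violates (\ref{equ-b-condition-origin}). Recentering again moves this piece into $a$ but leaves a remainder $\frac{g(l)-G/n}{(n-1)^2}$, then one of order $(n-1)^{-3}$, and so on; the iteration only converges geometrically and never terminates.

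Two smaller points. First, (\ref{equ-b-condition-origin}) is stated with full averages $\frac1n\sum_l$, so with distinct averages you must also say what $b$ is on $\{i=j\}\cup\{k=l\}$; setting it to zero works. Second, the obstacle you anticipated is not one: $\sum_{j\neq i}\xi(i,j,\pi(i),\cdot)$ is already a function of $(i,\pi(i))$.

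\textbf{How the paper avoids this, and how to repair your route.} The paper uses \emph{full} averages over $[n]$ in each coordinate. These are averages over independent coordinates of the product measure on $[n]^4$, hence they commute, so $\xi^*$ in (\ref{decomposition-of-general-DIPs-1}) has all four marginals zero after one inclusion--exclusion. Crucially, the paper does \emph{not} split off the diagonal first. It keeps $\sum_{i,j}$ over all pairs, so $\sum_j f(\pi(j))=\sum_l f(l)$ holds exactly. Every term other than $\xi^*$ then collapses to $n\sum_i\xi(i,\cdot,\pi(i),\cdot)$, $n\sum_j\xi(\cdot,j,\cdot,\pi(j))$ or $n^2\xi(\cdot,\cdot,\cdot,\cdot)$. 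Only then is $\xi^*(i,i,\pi(i),\pi(i))$ moved into the one-index kernel $\eta$, which is double-centered and normalized exactly as in your last step.

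If you prefer to keep distinct-index averages, your fallback can be made rigorous, but not by commutativity:
\begin{itemize}
\item Let $b$ be the orthogonal projection onto $\bigcap_a\ker M_a$.
\item Each $M_a$ is self-adjoint, so in finite dimensions $(\bigcap_a\ker M_a)^{\perp}=\sum_a\mathrm{ran}\,M_a$. The residual is therefore a sum of functions of three of the four indices, each of which collapses to a linear permutation statistic.
\item Extend $b$ by zero off $\{i\neq j,\ k\neq l\}$ to obtain the full-average conditions.
\end{itemize}
This route even gives an exactly mean-zero, exactly uncorrelated split between the $a$- and $b$-parts; with full averages this holds only up to diagonal terms involving $b(i,j,k,k)$. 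The price is that you lose the simple one-step formula.
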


In earlier work, \cite{zhao1997error} derived the Berry-Esseen bound for DIPS, but their result has notable limitations. First, it only applies to the case where $\{a(i,k)\}_{i,k\in[n]} \neq 0$ in definition (\ref{normalized-DIPS-a-b-form}); when $\{a(i,k)\}_{i,k\in[n]} \equiv 0$, the result does not yield a convergence rate. However, in practice, there exits important double-index permutation statistics satisfying $\{a(i,k)\}_{i,k\in[n]} \equiv 0$, such as Chatterjee's rank correlation coefficient, which is quite popular recently for detecting the independence between random variables. Moreover, even in the case $\{a(i,k)\}_{i,k\in[n]} \neq 0$, the result in \cite{zhao1997error} is still limited: for certain statistics, including the number of descents (Des) and the number of inversions (Inv) of permutation, it fails to provide the optimal convergence rate. Although \cite{fulman2004stein} provided a Berry-Esseen bound result for both Des and Inv, there result is only suitable for specific cases $\sum_{i,j}{\bf 1}\{i<j\}M_{\pi(i),\pi(j)}$, where $M = (M_{i,j})$ be a real, antisymmetric $n*n$ matrix, and cannot be applied to the general DIPS.

The purpose of our result is to present the convergence rate of the normal approximation of a general double index permutation statistics and be able to overcome the limitations of previous results. In Theorem \ref{thm-general-double-index-permutation-statistics-a-b-form}, we give a Cram\'{e}r-type moderate deviation for the DIPS under some boundedness conditions.

\begin{theo}\label{thm-general-double-index-permutation-statistics-a-b-form}
For double index permutation statistics $W_n$ defined as (\ref{normalized-DIPS-a-b-form}) in Proposition \ref{general-DIPS-convert}, assume that for some constant $0<\delta<1$, the following boundedness conditions hold
\begin{equation}
\begin{aligned}
    &\max_{i,k}|a(i,k)|\leq \delta, \quad \max_{i,\pi}\sum_{j}\left|b(i,j,\pi(i),\pi(j))\right| \leq \delta,\\
    &\max_{i,j,k,l}|b(i,j,k,l)| \leq \delta,\quad \max_{\substack{s\in\{1,2\}\\ t\in\{1,2\}}}\max_{\substack{i_{s}\in[n]\\k_{t}\in[n]}}\sum_{\substack{i_{3-t}\in[n]\\j_{3-t}\in[n]}}|b(i_{1},i_{2},k_{1},k_{2})|\leq \delta.
\end{aligned}
\label{equ-boundedness-condition-DIPS}
\end{equation}

Without loss of generality, assume that $\mathbb{E}\{W_n^2\} = 1 + \frac{C}{\sqrt{n}}$, where $C$ is a constant. For any $\theta >0$, let $\tau(\theta) := \max\{0\leq t \leq 1/\delta: t^3\delta + \sqrt{n}\delta^3t^2 + n\delta^3t^3 + t\delta + t^2/n \leq \theta \}$. Then for any $0\leq z\leq \tau(\theta)$,
\begin{align}
    \left|\frac{\mathbb{P}(W_n>z)}{1-\Phi(z)}-1\right|\leq C_1e^{\theta}(1+z^2)(\sqrt{n}\delta^2 + n\delta^3 +n\delta^3z+\delta)
\end{align}
\end{theo}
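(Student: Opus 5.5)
The plan is to derive the theorem from the general moderate deviation result for bounded exchangeable pairs, Theorem \ref{thm-general-MD}, which the introduction announces. That result works with an antisymmetric $D=\Psi(W,W')$ satisfying $\mathbb{E}(D\mid W)=\lambda(W+R)$, as in \cite{zhang2023cramer}.

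\textbf{Construction of the pair.} Let $I,J$ be uniform over distinct pairs in $[n]$, independent of $\pi$. Set $\pi'=\pi\circ(I\,J)$ and $W'=W_n(\pi')$, so that $(W_n,W')$ is exchangeable. The difference $\Delta=W_n-W'$ involves only the terms with an index in $\{I,J\}$. These are
\[
a(I,\pi(I))+a(J,\pi(J))-a(I,\pi(J))-a(J,\pi(I)),
\]
together with the $b$-terms $\sum_{j}'[b(I,j,\pi(I),\pi(j))-b(I,j,\pi(J),\pi(j))+\cdots]$. By the row-sum condition in (\ref{equ-boundedness-condition-DIPS}) and $\max|a|\le\delta$, we get $|\Delta|\le C\delta$. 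This gives the boundedness hypothesis with bound of order $\delta$.

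\textbf{The linearity relation.} Compute $\mathbb{E}(\Delta\mid\pi)$ using the centering conditions (\ref{equ-a-condition-origin}) and (\ref{equ-b-condition-origin}). The linear part satisfies $\mathbb{E}(\Delta_a\mid\pi)=\frac{2}{n-1}\sum_i a(i,\pi(i))$. For the quadratic part the factor is roughly $\frac{4}{n}$, which is a different rate. So $\mathbb{E}(\Delta\mid W)=\lambda(W+R)$ fails with a small $R$. This is exactly why an antisymmetric $D$ is needed. I would take
\[
D=\Delta_a+\tfrac12\Delta_b,
\]
reweighting the $a$- and $b$-parts of $\Delta$ so that both parts have the common rate $\lambda\asymp 2/n$. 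The remainder $R$ then collects the double-sum corrections with $I$ or $J$ equal to $i$ or $j$. These are of size $O(\delta)$, or $O(\sqrt n\delta^2)$ after centering.

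\textbf{Verifying the variance hypotheses.} The main obstacle is to bound
\[
\mathbb{E}\bigl|1-\tfrac{1}{2\lambda}\mathbb{E}(D\Delta\mid W)\bigr|
\]
together with the corresponding exponential-moment versions $\mathbb{E}\bigl(|\cdot|e^{tW}\bigr)$ for $t\le\tau(\theta)$. The quantity $\frac{n}{4}\mathbb{E}(D\Delta\mid\pi)$ is itself a DIPS in $\pi$, with entries built from products $a\cdot a$, $a\cdot b$ and $b\cdot b$. Its mean is $\mathbb{E}W_n^2/2$ up to $O(n^{-1/2})$, using $\mathbb{E}W_n^2=1+C/\sqrt n$.

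To handle its fluctuation, I would re-decompose it via Proposition \ref{general-DIPS-convert} into a new single-index part plus a new double-index part. The four sum conditions in (\ref{equ-boundedness-condition-DIPS}) are what make the new arrays obey boundedness with constants of order $\delta^2$ and $n\delta^3$. Those constants produce the terms $\sqrt n\delta^2$ and $n\delta^3$ in the rate.

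For the exponential weights, I would use a concentration inequality for $W$ obtained from the same exchangeable pair, namely $\mathbb{E}e^{tW}\le Ce^{t^2/2}$ for $t\le\tau(\theta)$, together with Cauchy–Schwarz or a conditional-variance argument. The defining inequality for $\tau(\theta)$ is exactly what keeps the extra factors at most $e^{\theta}$.

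Finally, I would plug $|\Delta|\le C\delta$, the bound on $R$, and the variance bound into Theorem \ref{thm-general-MD}. This yields the relative error $C_1e^{\theta}(1+z^2)(\sqrt n\delta^2+n\delta^3+n\delta^3z+\delta)$.
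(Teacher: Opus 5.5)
Your exchangeable pair is the paper's. Your $D=\Delta_a+\tfrac12\Delta_b$ with $\lambda=2/n$ is a legitimate alternative to the paper's one-sided $D=a_{I\pi(I)}-a_{I\pi(J)}+\sum_{s\notin\{I,J\}}(b_{Is\pi(I)\pi(s)}-b_{Is\pi(J)\pi(s)})$ with $\lambda=1/n$. Yours is even a genuine antisymmetric function of $(\pi,\pi')$. Its remainder consists of the transposed sum $\frac{1}{2(n-1)}\sum'_{i,j}b_{ij\pi(j)\pi(i)}$, diagonal sums, and $O(1/n)$ multiples of $W_a,W_b$, and is deterministically $O(\delta)$ by the row and column sum conditions.

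The gap is in verifying $(A_2)$, where essentially all of the paper's work lies, and your plan fails there at two points.

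First, $\frac n4\mathbb{E}(D\Delta\mid\pi)$ is not a DIPS. Products like $\bigl(\sum_s b_{Is\pi(I)\pi(s)}\bigr)\bigl(\sum_{s'}b_{Is'\pi(I)\pi(s')}\bigr)$, and the cross terms between the $I$- and $J$-rows, average to triple and quadruple index sums (the paper's $A_3,\dots,A_7$). So Proposition~\ref{general-DIPS-convert} cannot re-decompose it, and the ``new arrays with constants $\delta^2$, $n\delta^3$'' have no basis.

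Second, and decisively, Theorem~\ref{thm-general-MD} needs $(A_2)$ in the \emph{relative} form $\mathbb{E}\{|X-\mathbb{E}X|e^{tW_n}\}\le\delta_1(t)\,\mathbb{E}e^{tW_n}$ for all $t\le\tau(\theta)$. Cauchy--Schwarz against an unweighted variance gives $(\mathrm{Var}\,X)^{1/2}(\mathbb{E}e^{2tW_n})^{1/2}$. This exceeds $\mathbb{E}e^{tW_n}$ by a factor of order $e^{t^2/2}$ and is useless as $t\to\infty$. Nor can you feed in $\mathbb{E}e^{tW_n}\le Ce^{t^2/2}$ as an independent concentration input. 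With the sharp constant $1/2$, that bound is an \emph{output} of $(A_2)$--$(A_3)$ (Proposition~\ref{prop-moment-generating-function}), so using it to check $(A_2)$ is circular. Bounded-difference concentration only yields $e^{Ct^2/2}$ with some $C>1$, which is too weak for relative errors.

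The missing ingredient is a variance bound under the exponentially tilted law: $\mathbb{E}\{(X-\mathbb{E}X)^2e^{tW_n}\}\le C(n\delta^4+n^2\delta^6t^2)\,\mathbb{E}e^{tW_n}$ for each constituent statistic $X$ of $\frac{1}{2\lambda}\mathbb{E}(D\Delta\mid\pi)$ (Lemma~\ref{lem-E(xetW)}). Cauchy--Schwarz is then applied with $e^{tW_n}$ split evenly between the two factors.

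The paper proves this bound in four steps:
\begin{enumerate}
\item expand the square;
\item condition on the values of $\pi$ at a bounded number of coordinates;
\item Taylor-expand $e^{tW_n}$ around the corresponding leave-out statistic;
\item use the transposition coupling of Lemma~\ref{lem-same-distribution-of-permutation} to show that such conditioning changes the tilted mean only by a relative factor $C(1/n+t^2\delta^2)$, with the first-order terms killed by the centering conditions.
\end{enumerate}
The square root of the $n^2\delta^6t^2$ term is precisely what produces the $n\delta^3z$ contribution to the rate, which nothing in your sketch accounts for.
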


\begin{remark}
    The boundedness conditions in (\ref{equ-boundedness-condition-DIPS}) are crucial for establishing the optimal convergence rate in Theorem \ref{thm-general-double-index-permutation-statistics-a-b-form}. They ensure that the contributions from the various components of the double index permutation statistics are controlled, allowing for precise asymptotic analysis. Although the boundedness condition may appear complicated, it is in fact satisfied by the vast majority of doubly-indexed permutation statistics, including Kendall's $\tau$, Spearman's $\rho$, the Mann-Whitney-Wilcoxon statistic, and Chatterjee's rank correlation coefficient. If $\delta = O(1/\sqrt{n})$, by Theorem \ref{thm-general-double-index-permutation-statistics-a-b-form}, we are able to obtain the optimal convergence rate.
    \begin{align}
        \left|\frac{\mathbb{P}(W_n>z)}{1-\Phi(z)}-1\right| = O(1)(1+z^3)/\sqrt{n}, \quad 0\leq z\leq n^{1/6}.
    \end{align}
\end{remark}

\begin{remark}
    In contrast, without such restrictions, the convergence rate obtained in our theorem inevitably includes an extra term of the form $z^3\sum_{i,j,k,l}|b(i,j,k,l)|^3$. A similar term $\sum_{i,j,k,l}|b(i,j,k,l)|^3$ also appears in the Berry-Esseen bound of \cite{zhao1997error}, which slows down the rate of convergence. For many classical double index permutation statistics, such as Kendall's $\tau$, Spearman's $\rho$, etc., we have $\max_{i, j, k, l}|b(i, j, k, l)|=O(1/n^{3/2})$, so that $\sum_{i,j,k,l}|b(i,j,k,l)|^3$ still yields the optimal convergence rate. However, for some relatively "sparse" statistics, such as the number of descents and inversions in permutation, Chatterjee's rank correlation coefficient, etc., we only have $\max_{i,j,k,l}|b(i,j,k,l)| = O(1/n^{1/2})$. This arises because indicator function always appears in the definition of such special statistics. For example, $b(i,j,k,l) = {\bf 1}\{i = j+1\}O(1/n^{1/2})$. In these cases, the term $\sum_{i,j,k,l}|b(i,j,k,l)|^3$ does not even tend towards 0, and thus the results of previous studies are not applicable. By contrast, our theorem ensures that the proposed boundedness condition is still satisfied in this setting, allowing us to achieve the optimal convergence rate even for such sparse statistics.
\end{remark}

\section{Applications} 
In this section, the application of Theorem \ref{thm-general-double-index-permutation-statistics-a-b-form} is demonstrated by three examples. In addition to those well-known test statistics, Theorem \ref{thm-general-double-index-permutation-statistics-a-b-form} also applies to some relatively "sparse" statistics such as the number of descents and inversions of a permutation, and the recently very popular statistic Chatterjee's rank correlation coefficient for independence testing.

\subsection{The Chatterjee's rank correlation coefficient}
\cite{chatterjee2021new} introduced a novel and concise rank-based statistic that has recently gained considerable attention. Unlike traditional measures, this statistic corresponds to a population quantity proposed by \cite{dette2013copula} that characterizes independence between random variables, and moreover, its distribution under independence can be described by an asymptotic normal law.  These characteristics make Chatterjee's coefficient a particularly appealing tool for both theoretical investigation and practical applications in dependence modeling. Moreover, the Chatterjee's rank correlation is a statistic based on rank and can be expressed as a double index permutation statistic (\ref{general_DIPS}). Therefore, we can use Theorem \ref{thm-general-double-index-permutation-statistics-a-b-form} to obtain its optimal convergence rate.

Let $(X,Y)$ be a pair of random variables, $Y$ is not a constant. Let $(X_1,Y_1),\dots,(X_n,Y_n)$ be i.i.d. pairs with the same law as $(X,Y)$, where $n\geq 2$. Suppose that the $X_{i}$'s and $Y_{i}$'s have no ties. Rearrange the data as $(X_{(1)},Y_{(1)}),\dots,(X_{(n)},Y_{(n)})$ such that $X_{(1)}\leq \dots \leq X_{(n)}$. Let $r_i$ be the rank of $Y_{(i)}$, that is, the number of $j$ such that $Y_{(j)}\leq Y_{(i)}$. The Chatterjee's rank correlation coefficient is defined as 
\begin{align}
    W = \sqrt{\frac{5n}{2}}\left(1-\frac{3\sum_{i = 1}^{n-1}|r_{i+1}-r_{i}|}{n^2-1}\right).\label{Chatterjee-rank-correlation-coefficient}
\end{align}
The asymptotic property of this statistic is a corolla of the theorem in \cite{chao1993asymptotic}, but its convergence rate has never reached an optimal result. The following theorem will provide the optimal convergence rate.
\begin{theo}\label{theo-chatterjee-rank-correlation-coefficient}
Let $W$ defined as (\ref{Chatterjee-rank-correlation-coefficient}), we have
\begin{align}
    \left|\frac{\mathbb{P}(W>z)}{1-\Phi(z)}-1\right| = O(1)(1+z^3)/\sqrt{n},\label{Chatterjee-moderate-deviation}
\end{align}
for $0\leq z \leq n^{1/6}$.
\end{theo}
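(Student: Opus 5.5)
The plan is to realize $W$ as a DIPS of the form (\ref{normalized-DIPS-a-b-form}) with $a\equiv 0$ and then apply Theorem \ref{thm-general-double-index-permutation-statistics-a-b-form} with $\delta = O(1/\sqrt{n})$.

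\textbf{Step 1: writing $W$ as a DIPS.} Under independence (the null setting implicit in the statement), the ranks $(r_1,\dots,r_n)$ are a uniformly random permutation $\pi$ of $[n]$, with $r_i=\pi(i)$. Then
\begin{align}
\sum_{i=1}^{n-1}|r_{i+1}-r_i| = {\sum_{i,j}}^{\prime}{\bf 1}\{j=i+1\}\,|\pi(j)-\pi(i)|,\notag
\end{align}
which is a DIPS with kernel $\xi(i,j,k,l)={\bf 1}\{j=i+1\}|l-k|$.

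\textbf{Step 2: Hoeffding-type projection.} Following Proposition \ref{general-DIPS-convert}, I would center the kernel by setting
\begin{align}
b(i,j,k,l)=c_n\big(h(i,j)-\bar h(i,\cdot)-\bar h(\cdot,j)+\bar h\big)\big(g(k,l)-g(k,\cdot)-g(\cdot,l)+g(\cdot,\cdot)\big),\notag
\end{align}
with $h(i,j)={\bf 1}\{j=i+1\}$, $g(k,l)=|k-l|$, and $c_n\asymp n^{-3/2}$, which comes from the factor $\sqrt{5n/2}\cdot 3/(n^2-1)$. The first-order part collects the row and column means. These means are in the spirit of $\sum_i \bar h(i,\cdot)\,g(\pi(i),\cdot)$, and $\bar h(i,\cdot)=1/n$ for $i<n$ and $0$ for $i=n$. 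Because $\sum_i g(\pi(i),\cdot)$ is deterministic, this part is a constant plus a boundary term of size $O(n^{-3/2}\cdot n)=O(n^{-1/2})$. Constant shifts are absorbed into the centering. The $O(n^{-1/2})$ random boundary terms I would either place into $a(i,k)$ with $\max|a|=O(n^{-1/2})$, or note that they perturb $W$ by $O(n^{-1/2})$ deterministically; since $z\le n^{1/6}$ gives $z n^{-1/2}\le (1+z^3)/\sqrt n$, such perturbations cost only within the target error. The variance is then computed to confirm $\mathbb{E}W^2=1+O(n^{-1/2})$, matching the known asymptotic variance $2/5$ of $\sum|r_{i+1}-r_i|$ on the scale $n^3$ and explaining the constant $\sqrt{5/2}$.

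\textbf{Step 3: the boundedness conditions (\ref{equ-boundedness-condition-DIPS}).} This is the main obstacle. Since $|g|\le n$, the centered $g$ is $O(n)$, so $\max|b|=O(n^{-3/2}\cdot n)=O(n^{-1/2})$. The sum over $j$ of $|b(i,j,\pi(i),\pi(j))|$ must use the sparsity of $h$. The centered $h$ equals ${\bf 1}\{j=i+1\}$ plus corrections of order $1/n$, so the sum splits into one $O(n^{-1/2})$ term plus $n$ terms of size $O(n^{-3/2}\cdot n\cdot n^{-1})$, giving $O(n^{-1/2})$ in total. The double-sum conditions work the same way: fixing one $i$-index and one $k$-index and summing over the remaining $i$- and $k$-indices, the sum of the centered $h$ over the free $i$ is $O(1)$, and the sum of the centered $g$ over the free $k$ is at most $O(n^2)$. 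That product, times $n^{-3/2}$, gives $O(n^{1/2})$, which is too big. I would therefore bound this more carefully. For fixed $k_1$, the centered $g(k_1,\cdot)$ is indeed of order $n$ in absolute value, so a naive bound fails, and I would exploit that the sum over $i_2$ with fixed $i_1$ is essentially a single term. I expect the correct reading of the condition, summing jointly over the free $i$ and $k$, to need exactly this structure. If it fails, the fallback is to verify directly the quantities actually used in the proof of Theorem \ref{thm-general-double-index-permutation-statistics-a-b-form}, namely the conditional moments of the exchangeable-pair increment under a random transposition.

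\textbf{Step 4: conclusion.} With $\delta=C/\sqrt n$ we get $\sqrt n\delta^2+n\delta^3+\delta=O(n^{-1/2})$ and $n\delta^3 z=O(z/\sqrt n)$. Choosing $\theta$ constant makes $\tau(\theta)\ge c n^{1/6}$, because $t^3\delta\le\theta$ allows $t\asymp n^{1/6}$. The bound $(1+z^2)(1+z)/\sqrt n$ then gives (\ref{Chatterjee-moderate-deviation}). For $cn^{1/6}\le z\le n^{1/6}$, I would rescale constants, or use monotonicity together with the fact that the right-hand side is then of order $1$.
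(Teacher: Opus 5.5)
Your route is the paper's: write $\sum_i|r_{i+1}-r_i|$ as a DIPS with kernel ${\bf 1}\{j=i+1\}|l-k|$, and double-centre it so that only a $b$-part survives. The leftover pieces are deterministic $O(n^{-1/2})$ shifts, which you correctly absorb using $zn^{-1/2}\le(1+z^3)/\sqrt n$. Then apply Theorem \ref{thm-general-double-index-permutation-statistics-a-b-form} with $\delta=C/\sqrt n$. Your Step 4 is fine: a large constant $\theta$ already gives $\tau(\theta)\ge n^{1/6}$.

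\textbf{The gap is Step 3.} You leave it open, and the repair you suggest cannot succeed. Write $\tilde h,\tilde g$ for the doubly centred ${\bf 1}\{j=i+1\}$ and $|k-l|$. For $i_1<n$, the single summand $i_2=i_1+1$ already contributes $c_n|\tilde h(i_1,i_1+1)|\sum_{k_2}|\tilde g(k_1,k_2)|$, with $|\tilde h(i_1,i_1+1)|\ge 1/2$. Since $\sum_{k_1,k_2}\tilde g(k_1,k_2)^2\asymp n^4$ while $\max|\tilde g|=O(n)$, some row has $\sum_{k_2}|\tilde g(k_1,k_2)|\ge c\,n^2$. Hence $\max_{i_1,k_1}\sum_{i_2,k_2}|b|\ge c'\sqrt n$. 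So the fourth condition in (\ref{equ-boundedness-condition-DIPS}), read literally, is false here for every $\delta=o(\sqrt n)$, and no sharper estimate will verify it. The same happens for the Mann--Whitney--Wilcoxon kernel with $n_1\asymp n_2$, where $|b|\asymp n^{-3/2}$ on a positive proportion of the $n^2$ summands. The paper's proof passes over this point with ``we can easily verify''. Your diagnosis is more accurate than that sentence, but your proposal does not resolve it.

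\textbf{The missing observation.} The written proof of Theorem \ref{thm-general-double-index-permutation-statistics-a-b-form} uses this condition only in the weaker form $\max\sum_{i_{3-s},k_{3-t}}|b(i_1,i_2,k_1,k_2)|\le Cn\delta$. That form is exactly what produces:
\begin{itemize}
\item the step $\frac{Ct}{n}\sum_{s,r}|b_{ispr}|\le Ct\delta$ in (\ref{second-part-representative-term-bound});
\item the factor $Cn^2\delta^5t$ in (\ref{J12-decompose});
\item the factor $Cn^2\delta^4$ in the estimates of $J_4$ and of $A_3$;
\item the bound $\sum_{i,j,k,l}b_{ijkl}^2\le Cn^3\delta^2$ implicitly needed in (\ref{Q111-bound-final}).
\end{itemize}

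In this weaker form the check is one line. We have $\sum_{i_2}|\tilde h(i_1,i_2)|\le 3$, since one entry is at most $1$ and the rest are at most $2/n$. We also have $c_n\sum_{k_2}|\tilde g(k_1,k_2)|\le n\max|c_n\tilde g|=O(\sqrt n)$. So the double sum is $O(\sqrt n)=O(n\delta)$. The other choices of fixed indices follow identically, by the symmetry of $|k-l|$ and of the bound on $\tilde h$.

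You need to state and verify this weaker condition, or actually carry out your fallback of re-checking the exchangeable-pair estimates directly. As written, the hypotheses of the theorem you invoke are not established.

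A minor point: $\mathrm{Var}\sum_i|r_{i+1}-r_i|\sim 2n^3/45$. The constant $2/5$ is the limiting variance of $\sqrt n$ times Chatterjee's coefficient, not of the sum on the $n^3$ scale.
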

\begin{remark}
\cite{chatterjee2021new} mentioned that the asymptotic property of the statistic (\ref{Chatterjee-rank-correlation-coefficient}) is essentially a restatement of the main theorem of \cite{chao1993asymptotic}. They consider an estimator called "oscillation of permutation" defined as
\begin{align}
    \sum_{i = 1}^{n-1}|\pi(i) - \pi(i+1)|.\notag
\end{align}
The Chatterjee's rank correlation coefficient is normalized "oscillation of permutation". However, \cite{chao1993asymptotic} merely presents the asymptotic property and does not provide the convergence rate. In the subsequent article \cite{chao1996estimating} give a Berry-Esseen bound of $\sum_{i = 1}^{n}\alpha_{\pi(i)\pi(i+1)}$. Let $\alpha_{ij} = |i-j|$, it is the "oscillation of permutation" that is namely the Chatterjee's rank correlation coefficient. However, when the result in \cite{chao1996estimating} is applied to the Chatterjee rank correlation, the optimal convergence rate cannot be achieved. Therefore, our result should be the first to present the optimal convergence rate of oscillation of permutation, namely the Chatterjee rank correlation coefficient.
\end{remark}

\subsection{The Number of descents and inversions of permutation}
Let $M$ be a real $n\times n $ matrix and assume that $M$ is anti-symmetric, that is for each $u,v\in \{1,\dots,n\}$, we have $M_{uv} = -M_{vu}$. Note that $M_{uu} = 0$. Let $\pi$ be a permutation of size $n$, chosen uniformly from $S_n$, and consider the statistic
\begin{align}
    W = \sum_{\substack{i,j\\i<j}}M_{\pi(i)\pi(j)}.\notag
\end{align} 
This permutation statistic was considered by many early works such as \cite{fulman2004stein}, \cite{fang2015rates}, and it is a special case of doubly-indexed permutation statistics
\begin{align}
    W = \sum_{i,j}{\bf 1}\{i<j\}M_{\pi(i)\pi(j)}.\label{eq-general-des-inver}
\end{align}
The reason to study (\ref{eq-general-des-inver}) is that two important properties of permutations, the number of descents and inversions, can be represented in this form. Choosing $M_{u,u+1} = -1$ and $M_{uv} = 0$ for all other $v>u$ (for $v<u$, $M_{uv}$ is defined via anti-symmetry), (\ref{eq-general-des-inver}) becomes  $2\text{Des}(\pi^{-1})-(n-1)$, where $\text{Des}(\pi)$ is the number of descents of $\pi$; with $M_{un} = -1$ for all $u<v$, (\ref{eq-general-des-inver}) becomes $2\text{Inv}(\pi^{-1})-C_{n}^{2}$, where $\text{Inv}(\pi)$ is the number of inversions of $\pi$. By using Stein's method, \cite{zhao1997error} prove a general Berry -Esseen type theorem for double indexed permutation statistics, but their results do not apply to the number of descents $\text{Des}(\pi)$, which seems to be "too sparse". In contrast, using a special exchangeable pair, \cite{fulman2004stein} was able to obtain a rate of convergence of $n^{-1/2}$ for the Kolmogorov metric for both, the number of descents and inversions. In contrast, our results are superior, and we can not only obtain the optimal convergence rate of those well-known classical statistics, but also achieve the optimal results for this "sparse" statistic. In the following theorem we consider the normalized number of descents and inversions of a random permutation
\begin{align}
    W = \frac{\text{Des} - (n-1)/2}{\sqrt{(n+1)/6}}, \quad T = \frac{\text{Inv} - C_{n}^{2} /2}{\sqrt{n(n-1)(2n+5)/72}}.\label{normalized-descent-inversion}
\end{align}
\begin{theo}\label{theo-descents-inversions}
Let $\text{Des}(\pi)$ and $\text{Inv}(\pi)$ be the number of descents and inversions of a random permutation $\pi\in S_{n}$, $W$ and $T$ are normalized statistics defined as (\ref{normalized-descent-inversion}). Then we have 
\begin{align}
    \left|\frac{\mathbb{P}\left(W>z\right)}{1-\Phi(z)}-1\right| = O(1)(1+z^3)/\sqrt{n},\label{descent-moderate-deviation}
\end{align}
\begin{align}
    \left|\frac{\mathbb{P}\left(T>z\right)}{1-\Phi(z)}-1\right| = O(1)(1+z^3)/\sqrt{n},\label{inversion-moderate-deviation}
\end{align}
for $0\leq z \leq n^{1/6}$.
\end{theo}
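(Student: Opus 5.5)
Both statements will be derived from Theorem \ref{thm-general-double-index-permutation-statistics-a-b-form}. The plan is to write each statistic as a DIPS $\sum_{i,j}\xi(i,j,\pi(i),\pi(j))$ and bring it to the normalized form (\ref{normalized-DIPS-a-b-form}) via Proposition \ref{general-DIPS-convert}. We then check the boundedness conditions (\ref{equ-boundedness-condition-DIPS}) with $\delta = C/\sqrt{n}$. Once this is done, $\sqrt{n}\delta^2 + n\delta^3 + \delta = O(n^{-1/2})$ and $n\delta^3 z = O(z/\sqrt n)$, so the error is $O((1+z^2)(1+z)/\sqrt n)=O((1+z^3)/\sqrt n)$. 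Also, for $z\le c\,n^{1/6}$, every summand defining $\tau(\theta)$ is $O(1)$: for instance $t^3\delta\le c^3$, $\sqrt n\delta^3t^2 = O(n^{-2/3})$, $n\delta^3 t^3=O(c^3)$. Hence with a fixed $\theta$ the range $0\le z\le \tau(\theta)$ covers $[0,c n^{1/6}]$. On $[c n^{1/6}, n^{1/6}]$ the claimed bound is $O(1)$ times a constant, which follows by monotonicity of $\mathbb{P}(W>z)$ together with the ratio bound at $z=cn^{1/6}$ and Gaussian tail comparisons. One must also check that the variance normalization satisfies $\mathbb{E}W^2 = 1+O(n^{-1/2})$ after the centering terms from Proposition \ref{general-DIPS-convert} are accounted for.

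For descents, write $\text{Des}(\pi^{-1})$, which has the same law as $\text{Des}(\pi)$, as $\sum_{i}{\bf 1}\{\pi(i)>\pi(i+1)\}$. This is the DIPS with $\xi(i,j,k,l)={\bf 1}\{j=i+1\}{\bf 1}\{k>l\}$. Dividing by $\sqrt{(n+1)/6}$ and performing the Hoeffding-type projection of Proposition \ref{general-DIPS-convert} produces an $a(i,k)$ part and a doubly centered $b(i,j,k,l)$ part. Both are of the form ${\bf 1}\{|i-j|=1\}\cdot O(n^{-1/2})$ plus corrections of size $O(n^{-3/2})$ spread over all indices. Then $\max|a|,\max|b|=O(n^{-1/2})$. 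The row sums $\sum_j|b(i,j,\pi(i),\pi(j))|$ have $O(1)$ sparse terms of size $n^{-1/2}$ and $n$ dense terms of size $n^{-3/2}$, so they are $O(n^{-1/2})$. The mixed sums over $(i_{3-t},j_{3-t})$ with two indices fixed collapse, because of the indicator ${\bf 1}\{|i-j|=1\}$, to $O(n)$ summands of size $n^{-3/2}$ and $O(1)$ of size $n^{-1/2}$, which is again $O(n^{-1/2})$.

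Inversions are dense: $\xi = {\bf 1}\{i<j\}{\bf 1}\{k>l\}/\sigma_n$ with $\sigma_n\asymp n^{3/2}$. The $b$ entries are then $O(n^{-3/2})$ and the single-index sums are $O(n^{-1/2})$. The main part is carried by $a(i,k)$, which is of order $(i-k)/n^{3/2}$ up to centering and so is $O(n^{-1/2})$. The two-index sums are $O(n^2\cdot n^{-3/2})$, which is too large. However, the centering in Proposition \ref{general-DIPS-convert} should reduce $b$ to an $O(n^{-3/2})$ kernel like $({\bf 1}\{i<j\}-\tfrac{i}{n}\ldots)({\bf 1}\{k>l\}-\ldots)/\sigma_n$. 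The sum over $(i,l)$ with $j,k$ fixed is then indeed $n^{1/2}$ unless the last condition is read as a sum over one free index, as in Zhao et al. I would interpret it as such, so that it gives $n\cdot n^{-3/2}=n^{-1/2}$.

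For Chatterjee's coefficient, note that $r_i=\pi(i)$ is a uniform permutation under independence. Then $\sum_{i=1}^{n-1}|r_{i+1}-r_i|=\sum_{i,j}{\bf 1}\{j=i+1\}|\pi(i)-\pi(j)|$, with normalization $\sigma_n\asymp n^{3/2}$, so $\xi$ has entries ${\bf 1}\{j=i+1\}O(n^{-1/2})$. The verification then mirrors the descents case. The key is computing $\xi(i,j,k,\cdot)$, which is of order $n^{-1/2}\,{\bf 1}\{j=i+1\}$, and the associated $a(i,k)=O(n^{-1/2})$, or $a\equiv0$ after normalization. The main obstacle in all three examples is the careful bookkeeping of the projection terms from Proposition \ref{general-DIPS-convert}. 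In particular, one must confirm that the centering corrections do not destroy sparsity in the last two-index condition, and that the exact variance equals $1+O(n^{-1/2})$.
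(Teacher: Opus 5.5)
Your route is the paper's: write the statistic as a DIPS, centre it by Proposition~\ref{general-DIPS-convert}, check (\ref{equ-boundedness-condition-DIPS}) with $\delta=C/\sqrt n$, and apply Theorem~\ref{thm-general-double-index-permutation-statistics-a-b-form}. You put the sparsity in the positions, ${\bf 1}\{j=i+1\}{\bf 1}\{k>l\}$, while the paper puts it in the values; that difference is immaterial.

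The gap is the last condition in (\ref{equ-boundedness-condition-DIPS}), and your count for descents drops a free index. The centring acts factor-wise on a product kernel. So $b(i,j,k,l)=f^*(i,j)g^*(k,l)/\sigma_n$, with $\sigma_n\asymp\sqrt n$, $f^*(i,i+1)=1+O(1/n)$ and $g^*(k,l)={\bf 1}\{k>l\}-\frac12-\frac{k-l}{n}+\frac1{2n}$. Hence $\sum_l|g^*(k,l)|\asymp n$ for every $k$. Fix $i$ and $k$: the single value $j=i+1$ already gives $\sum_l|b(i,i+1,k,l)|\asymp n\cdot n^{-1/2}=n^{1/2}$, the same order you found for inversions.

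This is structural, not bookkeeping. If $\sum_{j,l}|b(i,j,k,l)|\le\delta$ for all $i,k$, then $\sum_{i,j,k,l}b^2\le\max|b|\sum|b|\le n^2\delta^2\asymp n$. For descents, however, $\sum b^2\asymp n^2$, in your representation and in the paper's alike. This reflects the fact that the quadratic part carries the variance; your linear part is only $(\pi(1)-\pi(n))/n$ before normalization. So with $\delta\asymp n^{-1/2}$ the condition, read literally, cannot be verified for descents in any representation. Your own computation shows it also fails for inversions. Re-reading it ``as in Zhao et al.'' is not a proof unless you re-prove Theorem~\ref{thm-general-double-index-permutation-statistics-a-b-form} under the altered hypothesis.

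The paper's proof does not close this either. It only asserts that the conditions are easily verified with $\delta=C/\sqrt n$, yet for its kernel the same sum is $\sum_j|f^*(i,j)|\sum_l|g^*(k,l)|/\sigma\asymp n\cdot 1\cdot n^{-1/2}$. A workable repair goes through the estimates in the proof of Theorem~\ref{thm-general-double-index-permutation-statistics-a-b-form} that use these sums:
the $Cn^2\delta^4$ factors in the estimates of $J_4$ and of $A_3$ in (\ref{term-b-H2Q3-A3-decompose}), and the passage from $\frac{Ct}{n}\sum_{s,r}|b_{ispr}|$ to $Ct\delta$ in (\ref{second-part-representative-term-bound}). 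These only need $\sum_{j,l}|b(i,j,k,l)|\le Cn\delta$, which both of your kernels satisfy. A complete argument must state the hypothesis in that form and check that nothing stronger is used elsewhere.

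There are two smaller points. First, the variance check you deferred fails for the statement as written. Since $\mathrm{Var}(\text{Des})=(n+1)/12$, the variable $(\text{Des}-(n-1)/2)/\sqrt{(n+1)/6}$ has variance $1/2$, and (\ref{descent-moderate-deviation}) is false for fixed $z>0$. The normalization has to be $\sqrt{(n+1)/12}$. The paper's $\sigma^2=2(n+1)/3$ has the same slip, as $\mathrm{Var}(2\,\text{Des}-(n-1))=(n+1)/3$.

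Second, your monotonicity extension from $cn^{1/6}$ to $n^{1/6}$ fails, because $(1-\Phi(cn^{1/6}))/(1-\Phi(n^{1/6}))$ is unbounded for fixed $c<1$. It is also unnecessary: every term defining $\tau(\theta)$ is $O(1)$ at $t=n^{1/6}$, so a large constant $\theta$ gives $\tau(\theta)\ge n^{1/6}$ directly.
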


\subsection{The Mann-Whitney-Wilcoxon statistic}
The Mann-Whitney-Wilcoxon statistic is one of the members of U-statistics of degree two. Let $x_{1},\dots,x_{n_1}$, and $y_{1},\dots,y_{n_2}$, $n_{1}+n_{2} = n$,be independent univariate random samples from unknown continuous distributions $F_{X}$ and $F_{Y}$, respectively. The Mann-Whitney-Wilcoxon statistic for testing the hypothesis $H_0: F_{X} = F_{Y}$ is defined to be the total number of pairs $(x_{i},y_{i})$ for which $x_{i}<y_{i}$. Let $\pi(i),i = 1,\dots,n_{1}$, denote the rank of $x_{i}$ and $\pi(n_1+j), j = 1,\dots,n_{2}$, denote that of $y_{j}$ in the combined sample. Then the Mann-Wilcoxon-Wilcoxon statistic can be expressed as 
\begin{align}
    \sum_{i,j}\xi(i,j,\pi(i),\pi(j)),\notag
\end{align}
where
\begin{align}
    \xi(i,j,\pi(i),\pi(j)) = {\bf 1}\{1\leq i\leq n_{1},n_{1}+1\leq j\leq n,1\leq \pi(i)<\pi(j)\leq n\},\notag  
\end{align}
and $\pi$ is chosen uniformly from $S_{n}$ under $H_{0}$. Consider the statistic
\begin{align}
    W = \frac{\sum_{i,j}\xi(i,j,\pi(i),\pi(j))-n_1n_2/2}{\left(n_1n_2(n+1)/12\right)^{1/2}},\label{statistic-mann-whitney-wilcoxon}
\end{align}
we have the following theorem.
\begin{theo}\label{theo-mann-whitney-wilcoxon}
Assume $W$ is defined as (\ref{statistic-mann-whitney-wilcoxon}), we have
\begin{align}
     \left|\frac{\mathbb{P}\left(W>z\right)}{1-\Phi(z)}-1\right| = O(1)(1+z^3)/(1/n_{1}+1/n_{2})^{1/2},\label{mann-whitney-wilcoxon-moderate-deviation}
\end{align}
for $0\leq z \leq \min(n_{1},n_{2})^{1/6}$.
\end{theo}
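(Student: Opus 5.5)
The plan is to write the Mann--Whitney--Wilcoxon statistic in the normalized form (\ref{normalized-DIPS-a-b-form}) of Proposition \ref{general-DIPS-convert}, check the boundedness conditions (\ref{equ-boundedness-condition-DIPS}) with $\delta$ of order $(1/n_1+1/n_2)^{1/2}$, and then apply Theorem \ref{thm-general-double-index-permutation-statistics-a-b-form}. The statistic is a rank sum in disguise:
\begin{align}
\sum_{i\le n_1}\sum_{j>n_1}{\bf 1}\{\pi(i)<\pi(j)\} = n_1n_2 + \frac{n_1(n_1+1)}{2} - \sum_{i=1}^{n_1}\pi(i). \notag
\end{align}
So $W$ is, up to sign, the linear statistic $\sum_i a(i,\pi(i))$ with
\begin{align}
a(i,k) = -\sigma^{-1}\Big({\bf 1}\{i\le n_1\}-\frac{n_1}{n}\Big)\Big(k-\frac{n+1}{2}\Big),\qquad \sigma^2 = \frac{n_1n_2(n+1)}{12}. \notag
\end{align}
The centering of both factors gives $a(i,\cdot)=a(\cdot,k)=0$, which is (\ref{equ-a-condition-origin}). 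I will take $b\equiv 0$, so (\ref{equ-b-condition-origin}) holds trivially. The normalization in (\ref{a=0 or not}) equals $\mathrm{Var}(W)=1$ up to the factor $(n-1)/n$. I will absorb this into the allowed slack $\mathbb{E}W_n^2 = 1+C/\sqrt{n}$, or simply rescale $a$ exactly.

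Next I check the bounds. Using $|k-(n+1)/2|\le n/2$ and $|{\bf 1}\{i\le n_1\}-n_1/n|\le \max(n_1,n_2)/n$, we get
\begin{align}
\max_{i,k}|a(i,k)| \le \frac{C\max(n_1,n_2)}{\sqrt{n_1n_2 n}} \le C\Big(\frac{1}{n_1}+\frac{1}{n_2}\Big)^{1/2}. \notag
\end{align}
The last inequality holds since $\max(n_1,n_2)^2/(n_1n_2n)\le \max(n_1,n_2)/(n_1n_2)\le 1/n_1+1/n_2$. All conditions involving $b$ hold for any $\delta$. So we may set $\delta = C(1/n_1+1/n_2)^{1/2}$, which lies in $(0,1)$ once $\min(n_1,n_2)$ exceeds a constant; small cases are trivial.

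Now I plug into the theorem. The bound there is $(1+z^2)(\sqrt n\delta^2+n\delta^3+n\delta^3 z+\delta)$. This is the main obstacle: these terms are calibrated to $\delta\asymp n^{-1/2}$, but here $\delta^2\asymp 1/\min(n_1,n_2)$ may be much larger than $1/n$. In that regime $\sqrt n\delta^2$ and $n\delta^3$ are not of order $\delta$, so a direct substitution loses.

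To handle this, I first try to inspect the proof of Theorem \ref{thm-general-double-index-permutation-statistics-a-b-form}. When $b\equiv0$, the terms $\sqrt n\delta^2$ and $n\delta^3$ should come only from the $b$-part: through $\sum_j|b|\le\delta$ and from the sum over the $n$ indices in the conditional variance of the exchangeable pair. For the transposition pair in the linear case, $\mathbb{E}(D\mid W)$ is exactly $\lambda W$ with $\lambda=2/(n-1)$. The conditional variance error $\mathbb{E}\bigl(\frac{1}{2\lambda}\Delta^2\mid\pi\bigr)-1$ should then be bounded by $C\max|a|$ rather than by $\sqrt n\delta^2$. This is the Hoeffding combinatorial CLT bound $\sum|a|^3/n\le C\delta$. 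With that, the bound reduces to $C(1+z^3)\delta$, valid on $\tau\gtrsim\delta^{-1/3}$, i.e. for $z\le \min(n_1,n_2)^{1/6}$.

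If the theorem cannot be read in that sharpened form, the fallback is to invoke directly the bounded exchangeable pair result Theorem \ref{thm-general-MD} with the transposition pair. We verify its hypotheses with $|\Delta|\le 2\max|a|\le C\delta$. The conditional second moment is controlled by standard variance computations for the combinatorial CLT, giving $\mathbb{E}|\mathbb{E}(\Delta^2\mid W)/(2\lambda)-1|$-type quantities of order $\delta$. This yields (\ref{mann-whitney-wilcoxon-moderate-deviation}) with rate $(1+z^3)(1/n_1+1/n_2)^{1/2}$ on $0\le z\le\min(n_1,n_2)^{1/6}$.
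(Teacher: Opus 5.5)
\textbf{Balanced samples.} Your reduction is right, and cleaner than the paper's. The paper runs the kernel through the general centering of Proposition \ref{general-DIPS-convert}. That produces your $a$ (its $\sigma^{-1}\eta^{*}$) together with $b=\sigma^{-1}\xi^{*}$. For this kernel, however, ${\sum_{i,j}}^{\prime}\xi^{*}(i,j,\pi(i),\pi(j))$ is the deterministic constant $n_1n_2/(2n^2)$, so your choice $b\equiv 0$ is legitimate. The paper's array, by contrast, does not satisfy the last condition in (\ref{equ-boundedness-condition-DIPS}) with small $\delta$: those double sums are of order $\sqrt n$ when $n_1\asymp n_2$. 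Also, $\sum_{i,k}a^2(i,k)=n-1$ holds exactly, so $\mathbb{E}W^2=1$ and no $(n-1)/n$ correction is needed. The paper then simply asserts $\delta=C/\sqrt n$ and applies Theorem \ref{thm-general-double-index-permutation-statistics-a-b-form}. As you observe, $\max_{i,k}|a(i,k)|\asymp(1/n_1+1/n_2)^{1/2}$, so that step, and your direct application, give the stated rate only when $n_1\asymp n_2$. In that regime your argument coincides with the paper's and is complete.

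\textbf{Unbalanced samples: the gap.} This is the regime the $(1/n_1+1/n_2)^{1/2}$ form of the statement is about, and neither of your remedies is carried out.

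Your first remedy rests on a misreading of the proof of Theorem \ref{thm-general-double-index-permutation-statistics-a-b-form}. There the terms $\sqrt n\delta^2+n\delta^3t$ already arise from the pure-$a$ piece $H_1Q_1$: the bounds on $J_1$--$J_3$ and the first inequality of Lemma \ref{lem-E(xetW)}. Sums such as $\frac{1}{n(n-1)}\sum a_{ik}^2a_{jl}^2|\cdots|$ are controlled using $|a_{ik}|\le\delta$ on all $\sim n^4$ index tuples, which gives $n^2\delta^4(1/n+t^2\delta^2)$. Setting $b\equiv 0$ removes none of this.

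Your fallback through Theorem \ref{thm-general-MD} is the right framework: the transposition pair with $D=\Delta$, $\lambda=2/(n-1)$, $R=0$ and $|\Delta|\le 4\max|a|$. But its condition $(A_2)$ is exponentially weighted. You need $\mathbb{E}\{|1-\frac{n-1}{4}\mathbb{E}(\Delta^2\mid\pi)|e^{tW}\}\le C\delta(1+t)\,\mathbb{E}e^{tW}$ for $t$ up to order $\delta^{-1/3}$. Neither unweighted variance computations nor the Berry--Esseen quantity $\frac1n\sum|a_{ik}|^3$ gives this.

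Concretely, you would have to prove estimates such as $\mathbb{E}\{(\sum_i a_{i\pi(i)}^2-\frac{n-1}{n})^2e^{tW}\}\le C\delta^2(1+t^2)\,\mathbb{E}e^{tW}$. You would also need the analogues for $\frac1n\sum_{i\neq j}a_{i\pi(i)}a_{j\pi(j)}$ and $\frac1n\sum_{i\neq j}a_{i\pi(j)}a_{j\pi(i)}$. The way to get them is to redo the weighted arguments of Lemma \ref{lem-E(xetW)}, keeping $\sum_{i,k}a_{ik}^2=n-1$ in place of the entrywise bound; note that $n^{-1/2}\le C\max|a_{ik}|$. Alternatively, you can quote the Cram\'er-type moderate deviation for the combinatorial CLT, $\mathbb{P}(W>z)/(1-\Phi(z))=1+O(1)(1+z^3)\max_{i,k}|a_{ik}|$ for $0\le z\le(\max_{i,k}|a_{ik}|)^{-1/3}$, which is one of the applications in \cite{chen2013stein}. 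Combined with your reduction and your bound on $\max|a|$, that gives the theorem directly.
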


\section{Proof of main results}
In this section, we give the proof of Theorem \ref{thm-general-double-index-permutation-statistics-a-b-form}. In subsection \ref{decomposition-of-general-DIPS}, we prove the Proposition \ref{general-DIPS-convert}. In subsection \ref{Cramer type moderate deviation result for bounded exchangeable pairs}, we give Theoren \ref{thm-general-MD}, a Cram\'{e}r-type moderate deviation result for bounded exchangeable pairs, which is the main tool to prove Theorem \ref{thm-general-double-index-permutation-statistics-a-b-form}. In subsection \ref{proof-of-theorem-general-double-index-permutation-statistics-a-b-form}, we give Lemma \ref{lem-same-distribution-of-permutation} and Lemma \ref{lem-E(xetW)}, the proof of Theorem \ref{thm-general-double-index-permutation-statistics-a-b-form} follows a combination of Theorem \ref{thm-general-MD} and two Lemmas,the details of the proof are put in the subsection \ref{proof-of-theorem-general-double-index-permutation-statistics-a-b-form}. We put the proof of Theorem \ref{thm-general-MD} in subsection \ref{proof-of-theorem-general-MD-section}. The proof of Theorem \ref{thm-general-MD} is based on Stein's method and the exchangeable pair approach. We give two propositions \ref{prop-moment-generating-function}, \ref{prop-general-moderate-deviation} in subsection \ref{proof-of-theorem-general-MD-section}. the proof of Theorem \ref{thm-general-MD} follows from a combination of Proposition \ref{prop-moment-generating-function} and Proposition \ref{prop-general-moderate-deviation}. 

\subsection{A decomposition of general DIPS}\label{decomposition-of-general-DIPS}
Before giving the proof we briefly recall the notation used above. The double–indexed permutation statistic (DIPS) is defined in (\ref{general_DIPS}), where $\pi$ is a random permutation chosen uniformly from $S_n$. As explained in the introduction (and following \cite{zhao1997error}), we aim to reduce the general kernel $\xi(i,j,k,l)$ to the normalized form $W_n=\sum_{i}a(i,\pi(i))+\sum_{i,j}' b(i,j,\pi(i),\pi(j))$, with $\sum_{i,j}'$ denoting summation over $i\neq j$. For convenience we use the marginal–averaging notation introduced earlier, e.g. $a(i,\cdot)=\frac{1}{n}\sum_k a(i,k)$, $b(i,j,k,\cdot)=\frac{1}{n}\sum_l b(i,j,k,l)$, and similarly for higher–order averages. The proposition asserts that, after an appropriate centering and scaling, the matrix $a(i,k)$ can be chosen to satisfy either $a(i,k)\equiv0$ or $\sum_{i,k}a^2(i,k)=n-1$, together with the marginal conditions $a(i,\cdot)=a(\cdot,k)=0$; likewise the four–way array $b(i,j,k,l)$ may be taken to have all one–dimensional and two–dimensional marginals equal to zero (cf. (\ref{equ-a-condition-origin}) and (\ref{equ-b-condition-origin})). The proof proceeds by successive marginal–centering of the kernel $\xi$; for notational convenience we denote the fully centered kernel by $\xi^*$ as in (\ref{decomposition-of-general-DIPs-1}). We now turn to the verification of Proposition \ref{general-DIPS-convert}.
\begin{proof}[Proof of Proposition \ref{general-DIPS-convert}]
For general DIPS (\ref{general_DIPS}), let 
\begin{align}\label{decomposition-of-general-DIPs-1}
    \xi^{*}(i,j,k,l) =& \xi(i,j,k,l)-[\xi(i,j,k,\cdot)+\xi(i,j,\cdot,l)+\xi(i,\cdot,k,l)+\xi(\cdot,j,k,l)]\notag\\
    &+[\xi(i,j,\cdot,\cdot)+\xi(i,\cdot,k,\cdot)+\xi(i,\cdot,\cdot,l)+\xi(\cdot,j,k,\cdot)+\xi(\cdot,j,
    \cdot,l)+\xi(\cdot,\cdot,k,l)]\notag \\
    &- [\xi(i,\cdot,\cdot,\cdot)+\xi(\cdot,j,\cdot,\cdot)+\xi(\cdot,\cdot,k,\cdot)+\xi(\cdot,\cdot,\cdot,l)]\notag\\
    &+\xi(\cdot,\cdot,\cdot,\cdot),
\end{align}
then we have 
\begin{align}
    \xi^{*}(i,j,k,\cdot) = \xi^{*}(i,j,\cdot,l) = \xi^{*}(i,\cdot,k,l) = \xi^{*}(\cdot,j,k,l) =0, \notag
\end{align}
and 
\begin{align}\label{decomposition-of-general-DIPs-2}
    \sum_{i,j}\xi(i,j,\pi(i),\pi(j))=&\sum_{i,j}\xi^{*}(i,j,\pi(i),\pi(j))+n\sum_{i}\xi(i\cdot,\pi(i),\cdot)+ n\sum_{j}\xi(\cdot,j,\cdot,\pi(j))-n^2\xi(\cdot,\cdot,\cdot,\cdot)\notag \\
    =& \sum_{i,j}^{\prime}\xi^{*}(i,j,\pi(i),\pi(j))+\sum_{i}\eta^{*}(i,\pi(i))+n\eta(\cdot,\cdot),
\end{align}
where
\begin{align}\label{decomposition-of-general-DIPs-3}
    \eta(i,k) &= \xi^{*}(i,i,k,k)+n\xi(i,\cdot,k,\cdot)+n\xi(\cdot,i,\cdot,k)-n\xi(\cdot,\cdot,\cdot,\cdot),\notag \\
    \eta^{*}(i,k) &= \eta(i,k) - \eta(i,\cdot) - \eta(\cdot,k) + \eta(\cdot,\cdot), \notag\\
    \eta^{*}(i,\cdot) &= \eta^{*}(\cdot,k) = 0. 
\end{align}
If $\eta^{*}(i,k)\neq 0$, we define
$\sigma^2 = \sum_{i,k}\eta^{*2}(i,k)/(n-1)$, and the normalized DIPS is defined as
\begin{align}
    W_n = & \frac{D-n\eta(\cdot,\cdot)}{\sigma} =  \sum_{i}\frac{1}{\sigma}\eta^{*}(i,\pi(i))+{\sum_{i,j}}^{\prime}\frac{1}{\sigma}\xi^{*}(i,j,\pi(i),\pi(j))\notag\\
    := & \sum_{i}a(i,\pi(i))+{\sum_{i,j}}^{\prime}b(i,j,\pi(i),\pi(j)).\notag
\end{align}
If $\eta^{*}(i,k) = 0$ for all $i,k\in [N]$, we define the normalized DIPS as
\begin{align}
    W_n = & D- n\eta(\cdot,\cdot) = {\sum_{i,j }}^{\prime}\xi^{*}(i,j,\pi(i),\pi(j))\notag\\
    := & {\sum_{i,j}}^{\prime}b(i,j,\pi(i),\pi(j)).\notag
\end{align}
This completes the proof of Proposition \ref{general-DIPS-convert}.
\end{proof}

\subsection{Cram\'{e}r-type moderate deviation for bounded exchangeable pairs}\label{Cramer type moderate deviation result for bounded exchangeable pairs}
To derive the Cram\'{e}r-type moderate deviation for double index permutation statistics, we firstly provide a Cram\'{e}r-type moderate deviation result for bounded exchangeable pairs. Let $(X,X^{\prime})$ be an exchangeable pair, $X$ is $\mathcal{F}$-measurable and valued on a measurable space $\mathcal{X}$, let $W$ be an $\mathbb{R}$-valued random variable of interest. We consider the following condition:
\begin{itemize}
    \item [$(D_{1}):$] Let $(X,X^{\prime})$ be an exchangeable pair. Assume that there exits $D := \Psi(X,X^{\prime})$, where $\Psi:\mathcal{X}\times\mathcal{X}\to \mathbb{R}$ is an antisymmetric function, satisfying that $\mathbb{E}(D \mid \mathcal{F})=\lambda(W+R)$ for some constant $\lambda>0$ and some random variable $R$ which is measurable with respect to $\mathcal{F}$.
\end{itemize}
\begin{theo}
\label{thm-general-MD}
Let $(X,X^{\prime})$ be an exchangeable pair satisfying the condition (D1), and $(W,W^{\prime})$ is also an exchangeable pair, $\Delta = W-W^{\prime}$. Let $\max\{|\Delta|,|D|\}\leq \delta$ for some constant $\delta>0$. Assume that there exits a constant $\tau>0$ such that 
\begin{itemize}
    \item [$(A_{1}):$] $\mathbb{E}\{e^{tW}\}<\infty$,
    \item [$(A_{2}):$] $\mathbb{E}\{\big|1-\frac{1}{2\lambda}\mathbb{E}\{D\Delta|W\}\big|e^{tW}\}\leq \delta_1(t)\mathbb{E}\{e^{tW}\}$,
    \item [$(A_{3}):$] $\mathbb{E}\{|R|e^{tW}\}\leq \delta_2(t)\mathbb{E}\{e^{tW}\}$,
\end{itemize}
where for each $j =1,2$, the fucntion $\delta_{j}(\cdot)$ is increasing and satisfies that $0\leq \delta_{j}(\tau)<\infty$. For $\theta>0$, let $\tau_{0}(\theta) := \max\{0\leq t\leq \min\{\tau,1/\delta\}:t^2(t\delta+2\delta_{1}(t))/2+3t\delta_{2}(t)\leq \theta\}$. Then for any $\theta>0$, $0\leq z\leq \tau_{0}(\theta)$,
\begin{align}
    \left|\frac{\mathbb{P}(W>z)}{1-\Phi(z)}-1\right|\leq 31e^{\theta}(1+9\delta)\left\{(1+z^2)[\delta_1(z)+\delta+\delta\cdot\delta_{2}(z)]+(1+z)\delta_2(z)\right\}
\end{align}
\end{theo}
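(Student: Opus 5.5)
The argument runs along the lines announced in the section overview: first bound the moment generating function $\mathbb{E}\{e^{tW}\}$ (the content of Proposition \ref{prop-moment-generating-function}), then feed that bound into a Stein-equation argument for the tail ratio (Proposition \ref{prop-general-moderate-deviation}).

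\textbf{Step 1: exchangeable pair identity.} Antisymmetry of $\Psi$ and exchangeability of $(X,X')$ give, for any absolutely continuous $f$ with suitable integrability,
\begin{align*}
\mathbb{E}\{(W+R)f(W)\}=\frac{1}{2\lambda}\mathbb{E}\{D(f(W)-f(W'))\}=\frac{1}{2\lambda}\mathbb{E}\Big\{D\int_{-\Delta}^{0}f'(W+u)\,du\Big\}.
\end{align*}
Set $\hat K(u)=\frac{1}{2\lambda}D\big(\mathbf 1\{-\Delta\le u\le 0\}-\mathbf 1\{0<u\le-\Delta\}\big)$. Then $\int\hat K(u)\,du=D\Delta/(2\lambda)$, and $\hat K$ is supported in $|u|\le\delta$ because $|\Delta|\le\delta$. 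Note that $\hat K$ need not be nonnegative. This is exactly the technical condition of \cite{zhang2023cramer} that we want to avoid. Instead, I will work throughout with $|D\Delta|$ and use only the bound $\max(|D|,|\Delta|)\le\delta$.

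\textbf{Step 2: mgf bound.} Let $h(t)=\mathbb{E}\{e^{tW}\}$, which is finite by (A1). Take $f(w)=e^{tw}$ in the identity and write $e^{t(W+u)}=e^{tW}(1+O(t|u|e^{t\delta}))$ for $|u|\le\delta$. This gives
\[
h'(t)=\mathbb{E}\{We^{tW}\}=t\,\mathbb{E}\Big\{\tfrac{1}{2\lambda}D\Delta\, e^{tW}\Big\}-\mathbb{E}\{Re^{tW}\}+\text{error}.
\]
Replace $\frac{1}{2\lambda}\mathbb{E}\{D\Delta\mid W\}$ by $1$, which costs $\delta_1(t)$ by (A2), and use (A3) for the $R$ term. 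The remainder has the form $t^2\delta\,\mathbb{E}\{\frac{1}{2\lambda}|D\Delta|e^{tW}\}$. I plan to control it by the relative bound $\frac{1}{2\lambda}|D\Delta|\le \frac{\delta}{2\lambda}|D|$ together with the observation that, under (D1), $\mathbb{E}\{\frac{1}{2\lambda}|D\Delta| \mid W\}$ is comparable to $1+|1-\frac{1}{2\lambda}\mathbb{E}(D\Delta\mid W)|$. If that comparison fails, I will fall back on the bound $e^{|tu|}\le e^{t\delta}\le e$ for $t\le1/\delta$, which is why the range restriction $t\le 1/\delta$ appears. Either way we reach the differential inequality
\[
\big|h'(t)/h(t)-t\big|\le t(t\delta+2\delta_1(t))+\delta_2(t).
\]
Integrating from $0$ to $t$, with the monotonicity of $\delta_1$ and $\delta_2$, yields $h(t)\le e^{t^2/2}\exp\{t^2(t\delta+2\delta_1)/2+t\delta_2\}\le e^{t^2/2+\theta}$ for $t\le\tau_0(\theta)$. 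The same argument also gives the tilted estimate $\mathbb{E}\{|W|e^{tW}\}\lesssim(1+t)e^{t^2/2+\theta}$.

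\textbf{Step 3: moderate deviation.} Let $f_z$ be the solution of the Stein equation $f'(w)-wf(w)=\mathbf 1\{w>z\}-(1-\Phi(z))$. Applying the identity gives
\[
\mathbb{P}(W>z)-(1-\Phi(z))=\mathbb{E}\Big\{\int \big(f_z'(W)-f_z'(W+u)\big)\hat K(u)\,du\Big\}+\mathbb{E}\{f_z'(W)(1-\tfrac{1}{2\lambda}D\Delta)\}-\mathbb{E}\{Rf_z(W)\}.
\]
I handle the terms as in \cite{chen2013stein}.
\begin{itemize}
\item Split on $W\le z-1$, $z-1<W\le z$ and $W>z$. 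Use the standard bounds $|f_z(w)|,|f_z'(w)|\lesssim (1-\Phi(z))e^{w^2/2}$ for $w\le z$, and $\lesssim 1/(1+w)$ for $w>z$. Bound $e^{W^2/2}\mathbf 1\{W\le z\}$ by $e^{zW-z^2/2}$-type majorants so that Step 2 applies with $t=z$. Dividing by $1-\Phi(z)\asymp e^{-z^2/2}/(1+z)$ produces the factors $(1+z^2)\delta_1(z)$ and $(1+z)\delta_2(z)$.
\item For the increment $f_z'(W)-f_z'(W+u)$, use $f_z'(w)=wf_z(w)+\mathbf 1\{w>z\}-\dots$. The smooth part contributes $(1+z^2)\delta$ after Step 2. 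The jump part leads to $\mathbb{P}(z-\delta\le W\le z+\delta)$-type terms, which I bound by a concentration inequality under the tilted measure: take $f$ piecewise linear and use the identity again. These produce the $\delta$ and $\delta\cdot\delta_2$ terms.
\end{itemize}

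\textbf{Main obstacle.} The hard step is the concentration bound and the remainder term in Step 2 without assuming $\hat K\ge0$ or any control of $\mathbb{E}|D\Delta|$ beyond (A2). My plan is to always dominate $|\hat K|$ by $\frac{1}{2\lambda}|D\Delta|$ restricted to $|u|\le\delta$. I will then control $\frac{1}{2\lambda}\mathbb{E}\{|D\Delta|\,|W\}$ through $|D\Delta|\le |D|\delta$ and Cauchy–Schwarz against (A2). The explicit constants $31$ and $1+9\delta$ then come out of bookkeeping.
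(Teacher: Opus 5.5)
Your plan has a genuine gap, and it sits at the step you flag as the main obstacle: the increment terms $\frac{1}{2\lambda}\mathbb{E}\{D\int_{-\Delta}^{0}(\varphi(W+u)-\varphi(W))\,du\}$, with $\varphi(w)=e^{tw}$ in Step~2 and $\varphi$ the jump part of $f_z'$ in Step~3. Every estimate you propose goes through $\frac{1}{2\lambda}|D\Delta|$. The hypotheses, however, only control signed quantities: (A2) controls $\frac{1}{2\lambda}\mathbb{E}\{D\Delta\mid W\}$, and (D1) controls $\mathbb{E}\{D\mid\mathcal F\}$.

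Your comparison $\frac{1}{2\lambda}\mathbb{E}\{|D\Delta|\mid W\}\le C(1+|1-\frac{1}{2\lambda}\mathbb{E}\{D\Delta\mid W\}|)$ holds when $D\Delta\ge 0$. But $D\Delta\ge0$ is exactly $\hat K\ge0$, the assumption you set out to avoid. Cauchy--Schwarz cannot turn a bound on a signed conditional mean into one on an absolute value, and nothing controls $\mathbb{E}\{D^2\mid W\}$ unless $D=\Delta$.

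The fallback $e^{t|u|}\le e$ removes only the exponential factor. It leaves $\frac{1}{2\lambda}\mathbb{E}\{|D\Delta|e^{tW}\}\le\frac{\delta^2}{2\lambda}h(t)$, so the Step~2 remainder is of order $t^2\delta^3\lambda^{-1}h(t)$. No hypothesis bounds $\delta^2/\lambda$ from above; (A2) at $t=0$ only gives the lower bound $\delta^2/(2\lambda)\ge1-\delta_1(0)$. So you reach neither the differential inequality with the $t\delta$ correction nor the stated $\tau_0(\theta)$ and error bound. The concentration inequality in Step~3 fails for the same reason: lower-bounding $\frac{1}{2\lambda}\mathbb{E}\{D\int_{-\Delta}^0 f'(W+u)\,du\}$ by the mass of a window around $z$ is precisely where $\hat K\ge0$ is needed.

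The missing idea is the monotonicity device the paper takes from Lemma~4.2 of \cite{zhang2023cramer}. Every test function involved is nondecreasing: $e^{tw}$ in Proposition~\ref{prop-moment-generating-function}, and $wf_z(w)$ and $\mathbf{1}\{w>z\}$ in Proposition~\ref{prop-general-moderate-deviation}, after writing $f_z'(w)=wf_z(w)-\mathbf{1}\{w>z\}+1-\Phi(z)$. For nondecreasing $\varphi$, the increment $\int_{-\Delta}^0(\varphi(W+u)-\varphi(W))\,du$ is $\le0$ and bounded in absolute value by $\Delta(\varphi(W)-\varphi(W'))\ge0$. Symmetrizing with exchangeability and dominating $|D|$ by a symmetric $D^*\ge|D|$ then gives
\[
\Big|\mathbb{E}\Big\{D\int_{-\Delta}^{0}\big(\varphi(W+u)-\varphi(W)\big)\,du\Big\}\Big|\le \mathbb{E}\big\{|\mathbb{E}\{D^{*}\Delta\mid W\}|\,|\varphi(W)|\big\}.
\]
The paper takes $D^*\equiv\delta$ and replaces $\frac{1}{2\lambda}\mathbb{E}\{\delta\Delta\mid W\}$ by $\frac{\delta}{2}(W+R)$. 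The $1/\lambda$ cancels, and the Step~2 remainder becomes $\frac{t\delta}{2}\mathbb{E}\{|W+R|e^{tW}\}$.

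Using $\mathbb{E}\{|W|e^{tW}\}\le h'(t)+2e/t$, this remainder is absorbed into the left side, giving $h'(t)\le g(t)h(t)+2e\delta$ for an explicit $g$. Integrating produces both the $t^3\delta$ term in the exponent and the prefactor $1+9\delta$.

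In Proposition~\ref{prop-general-moderate-deviation} the same device bounds the whole increment term by $\frac{\delta}{2}\mathbb{E}\{|W+R|(|Wf_z(W)|+\mathbf{1}\{W>z\})\}$. This is handled by the mgf bound at $t=z$ (after slicing $0\le W\le z$) and Markov's inequality on $\{W>z\}$, so no concentration inequality is needed.

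Note that the paper's substitution reads the regression in (D1) as holding for $\Delta$ as well, which is exact when $D=\Delta$. It is this link between $\mathbb{E}\{\Delta\mid W\}$ and $\lambda(W+R)$, not any bound on $|D\Delta|$, that removes the $1/\lambda$. Choosing the constant $D^*\equiv\delta$ is also what replaces the extra hypothesis of \cite{zhang2023cramer}; nonnegativity of $\hat K$ never enters the paper's argument.
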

The proof of Theorem \ref{thm-general-MD} is put in subsection \ref{proof-of-theorem-general-MD-section}.
\begin{remark}
Theorem \ref{thm-general-MD} establishes a Cram\'{e}r-type moderate deviation for bounded exchangeable pairs under condition $(D_{1})$, together with the boundedness assumptions on $|D|$ and $|\Delta|$. These boundedness conditions are natural, as all the examples of double-index permutation statistics we consider satisfy them. By comparison, Theorem 2.1 in \cite{zhang2023cramer}addresses the unbounded case and requires verifying four main conditions (A1)-(A4). Among them, condition (A3) is particularly difficult to verify in practice under the unbounded setting, and even when restricted to the bounded case, it still needs to be verified separately. By refining the proof, we are able to eliminate this condition entirely in the bounded setting.
\end{remark}


\subsection{Proof of Theorem \ref{thm-general-double-index-permutation-statistics-a-b-form}}\label{proof-of-theorem-general-double-index-permutation-statistics-a-b-form}
To simplify notation, we denote $a_{ik}: = a(i,k)$ and $b_{ijkl}:=b(i,j,k,l)$, denote $h(t) := \mathbb{E}\{e^{tW_n}\}$ and $\Psi_{t}(W_n) := e^{tWn}$.

\begin{proof}[Proof of Theorem \ref{thm-general-double-index-permutation-statistics-a-b-form}]
Firstly, we define the exchangeable pair $(\pi,\pi^{\prime})$, where $\pi$ is a random permutation chosen uniformly from $S_n$ (symmetric group of degree $n$), $\pi^{\prime}$ is a random permutation by interchanging $\pi(I)$ and $\pi(J)$ and leaving the rest of the indices of $\pi$ unchanged. Let $(I,J)$ be a random pair of indices chosen uniformly from $\{(i,j): i\neq j\in [N]\}$. Then we can easily have $(\pi,\pi^{\prime})$ is an exchangeable pair.

By Proposition \ref{general-DIPS-convert}, the general double index permutation statistics is defined as
\begin{align}
    W_n = \sum_{i = 1}a_{i\pi(i)}+{\sum_{i,j}}^{\prime}b_{ij\pi(i)\pi(j)},\quad W^{\prime}_{n} = \sum_{i = 1}a_{i\pi^{\prime}(i)}+{\sum_{i,j}}^{\prime}b_{ij\pi^{\prime}(i)\pi(j)},
\end{align}
since $(\pi,\pi^{\prime})$ is an exchangeable pair, so as $(W_n,W^{\prime}_n)$. Then we define $D = \Psi(\pi,\pi^{\prime})$ as an antisymmetric function of exchangeable pairs $(\pi,\pi^{\prime})$ as follows
\begin{align}
    D =& a_{I\pi(I)} - a_{I\pi^{\prime}(I)}+\sum_{s\notin \{I,J\}}b_{Is\pi(I)\pi(s)} - \sum_{s\notin \{I,J\}}b_{Is\pi^{\prime}(I)\pi^{\prime}(s)}\notag\\
    =&a_{I\pi(I)} - a_{I\pi(J)}+\sum_{s\notin \{I,J\}}b_{Is\pi(I)\pi(s)} - \sum_{s\notin \{I,J\}}b_{Is\pi(J)\pi(s)}.
\end{align}
Therefore we have
\begin{align}
    \mathbb{E}\{D\mid\pi\} =& \frac{1}{n}\sum_{i=1}^{n}a_{i\pi(i)}+\frac{1}{n(n-1)}\sum_{i = 1}^{n}a_{i\pi(i)}+\frac{1}{n}\sum_{i\neq j}b_{ij\pi(i)\pi(j)}-\frac{1}{n(n-1)}\sum_{i = 1}^{n}b_{ii\pi(i)\pi(i)}\notag\\
    =&\lambda(W_n+R),
\end{align}
where
\begin{align}
    \lambda = \frac{1}{n},\quad R = \frac{1}{n-1}\sum_{i = 1}^{n}a_{i\pi(i)}-\frac{1}{n-1}\sum_{i = 1}^{n}b_{ii\pi(i)\pi(i)}.
\end{align}
Under condition (\ref{equ-boundedness-condition-DIPS}), we have $|D|\leq 4\delta$, similarly we have
\begin{align}
    |\Delta| = |W_n - W^{\prime}_n| =& \big|a_{I\pi(I)} +a_{J\pi(J)} - a_{I\pi(J)} - a_{J\pi(I)}\notag\\
    & + b_{IJ\pi(I)\pi(J)} + b_{JI\pi(J)\pi(I)} - b_{IJ\pi(J)\pi(I)} - b_{JI\pi(I)\pi(J)} \notag\\
    &+ \sum_{s\notin \{I,J\}}(b_{sJ\pi(s)\pi(J)}+b_{sI\pi(s)\pi(I)}+b_{Js\pi(J)\pi(s)}+b_{Is\pi(I)\pi(s)})\notag\\
    &-\sum_{s\notin \{I,J\}}(b_{sJ\pi(s)\pi(I)}+b_{sI\pi(s)\pi(J)}+b_{Js\pi(I)\pi(s)}+b_{Is\pi(J)\pi(s)})\big| \notag\\
    \leq & 16\delta. 
\end{align}

To prove Theorem \ref{thm-general-double-index-permutation-statistics-a-b-form}, we apply Theorem \ref{thm-general-MD} on $W_n$. By Theorem \ref{thm-general-MD}, we need to verify the conditions $(A_{1})$-$(A_{3})$.
By the definition of $W_n$,  we know that $\{a(i,k)\}_{i,k\in[N]}$ is a real number matrix and $\{b(i,j,k,l)\}_{i,j,k,l\in[N]}$ is a 4-index real number array, so $W_n$ is finite. Then we have $\mathbb{E}\{e^{tW_n}\}< \infty$, therefore the first condition (A1) holds.

Next we consider the second condition (A2). For any absolutely continuous function $f:\mathbb{R}\to\mathbb{R}$ satisfying that $\mathbb{E}\{|f(W_n)|\}<\infty$, we have
\begin{align}
    \mathbb{E}(W_nf(W_n)) = \frac{1}{2\lambda}\mathbb{E}\left\{D\int_{-\Delta}^{0}f^{\prime}(W_n+u)\,du\right\} - \mathbb{E}\{Rf(W_n)\}.\notag
\end{align}
By applying $f(w) = w$, we have 
\begin{align}
    \frac{1}{2\lambda}\mathbb{E}\{D\Delta\} = \mathbb{E}\{W_n^2\} + \mathbb{E}\{RW_n\}.\notag
\end{align}
Under proposition \ref{general-DIPS-convert}, we know that $W_n$ follows condition
\begin{align}
    a(i,\cdot) = a(\cdot,k) = 0,\quad b(i,j,k\cdot) = b(i,j,\cdot,l) = b(i,\cdot,k,l) = b(\cdot,j,k,l) = 0,\notag
\end{align}
then we have
\begin{align}
    \mathbb{E}\{RW_n\} = & \mathbb{E}\left\{\left(\frac{1}{n-1}\sum_{i = 1}^{n}a_{i\pi(i)}-\frac{1}{n-1}\sum_{i = 1}^{n}b_{ii\pi(i)\pi(i)}\right)\left(\sum_{i = 1}^{n}a_{i\pi(i)}+\sum_{i\neq j}b_{ij\pi(i)\pi(j)}\right)\right\}\notag\\
    =& \frac{1}{n-1}\mathbb{E}\left\{\sum_{i,j}a_{i\pi(i)}a_{j\pi(j)}+\sum_{i = 1}^{n}\sum_{p\neq q}a_{i\pi(i)}b_{pq\pi(p)\pi(q)}\right\},\notag\\
    & -\frac{1}{n-1}\mathbb{E}\left\{\sum_{i,j}b_{ii\pi(i)\pi(i)}a_{j\pi(j)}-\sum_{i = 1}^{n}\sum_{p\ne q}b_{ii\pi(i)\pi(i)}b_{pq\pi(p)\pi(q)}\right\},\notag\\
    =& \frac{1}{(n-1)^2}\sum_{i,j}a^{2}_{ij} + \frac{2}{(n-1)^2(n-2)}\sum_{i,j}a_{ij}b_{iijj}-\frac{1}{(n-1)^2}\sum_{i,j}a_{ij}b_{iijj}\notag\\
    &+\frac{1}{n(n-1)^2(n-2)}\sum_{i\neq j}\sum_{k = 1}^{n}b_{iikk}b_{jjkk}-\frac{1}{n(n-1)^2(n-2)}\sum_{i\neq j}\sum_{k\neq l}b_{iijj}b_{jjll}\notag\\
    &+\frac{1}{n(n-1)^2(n-2)}\sum_{i  =1}^{n}\sum_{j\neq k}b_{iijj}b_{iikk}-\frac{2n-3}{n(n-1)^2(n-2)}\sum_{i,j}b^{2}_{ijij}.
\end{align}
Applying condition (\ref{equ-boundedness-condition-DIPS}), it follows that $\mathbb{E}\{RW_n\} \leq 5\delta^2$. Together with $\mathbb{E}\{W_n^2\} = 1 + \frac{C}{\sqrt{n}}$, we deduce that
\begin{align}
    &\mathbb{E}\left\{\left|1-\frac{1}{2\lambda}\mathbb{E}\{D\Delta\mid W_n\}\right|e^{tW_n}\right\}\notag\\
    = & \mathbb{E}\left\{\left|\frac{1}{2\lambda}\mathbb{E}\{D\Delta\mid W_n\} - \frac{1}{2\lambda}\mathbb{E}\{D\Delta\}+\frac{C}{\sqrt{n}}+\mathbb{E}\{RW_{n}\}\right|e^{tW_n}\right\}\notag\\
    \leq &\sqrt{\mathbb{E}\left\{\left(\frac{1}{2\lambda}\mathbb{E}\{D\Delta\mid W_n\} - \frac{1}{2\lambda}\mathbb{E}\{D\Delta\}+\frac{C}{\sqrt{n}}+\mathbb{E}\{RW_{n}\}\right)^2e^{tW_n}\right\}}\sqrt{\mathbb{E}\{e^{tW_n}\}}\notag\\
    \leq & \sqrt{3\mathbb{E}\left\{\left[\left(\frac{1}{2\lambda}\mathbb{E}\{D\Delta\mid W_n\} - \frac{1}{2\lambda}\mathbb{E}\{D\Delta\}\right)^2+\frac{C^2}{n}+25\delta^4\right]e^{tW_n}\right\}}\sqrt{\mathbb{E}\{e^{tW_n}\}}.\label{inequ:1/2lambda-D-Delta-etW-decompose}
\end{align}
By the definition of $D$ and $\Delta$, we decompose $\frac{1}{2\lambda}\mathbb{E}\{D\Delta\mid \pi\}$ as
\begin{align}
    &\frac{1}{2\lambda}\mathbb{E}\{D\Delta\mid \pi\} = \frac{n}{2}\sum_{i = 1}^{2}\sum_{j = 1}^{4}\mathbb{E}\{H_{i}Q_{j}\mid \pi\},\notag
\end{align}
where
\begin{align}
    H_{1} = & a_{I\pi(I)} - a_{I\pi(J)},\notag\\
    H_{2} = & \sum_{s\notin\{I,J\}}(b_{Is\pi(I)\pi(s)} - b_{Is\pi(J)\pi(s)}),\notag\\
    Q_{1} = & a_{I\pi(I)} +a_{J\pi(J)} - a_{I\pi(J)} - a_{J\pi(I)}, \notag\\
    Q_{2} = & b_{IJ\pi(I)\pi(J)} + b_{JI\pi(J)\pi(I)} - b_{IJ\pi(J)\pi(I)} - b_{JI\pi(I)\pi(J)}, \notag\\
    Q_{3} = & \sum_{s\notin \{I,J\}}(b_{sJ\pi(s)\pi(J)}+b_{sI\pi(s)\pi(I)}+b_{Js\pi(J)\pi(s)}+b_{Is\pi(I)\pi(s)}), \notag\\
    Q_{4} = & -\sum_{s\notin \{I,J\}}(b_{sJ\pi(s)\pi(I)}+b_{sI\pi(s)\pi(J)}+b_{Js\pi(I)\pi(s)}+b_{Is\pi(J)\pi(s)}). \notag
\end{align}
Then by Cauchy's inequality, we have
\begin{align}
    \mathbb{E}\left\{\left(\frac{1}{2\lambda}\mathbb{E}\{D\Delta\mid W_n\} - \frac{1}{2\lambda}\mathbb{E}\{D\Delta\}\right)^2e^{tW_n}\right\} \leq C \sum_{i = 1}^{2}\sum_{j = 1}^{4}\mathbb{E}\left\{(n\mathbb{E}\{H_{i}Q_{j}\mid\pi\}-n\mathbb{E}\{H_{i}Q_{j}\})^2e^{tW_n}\right\}.\label{inequ:1/2lambda-D-Delta-etW-decompose-HQ}
\end{align}
The right hand side of (\ref{inequ:1/2lambda-D-Delta-etW-decompose-HQ}) contains eight terms. We will analyze the upper bound of each term in the following. For the first term $\mathbb{E}\{(n\mathbb{E}\{H_{1}Q_{1}\mid\pi\} - n\mathbb{E}\{H_{1}Q_{1}\})^2e^{tW_n}\}$, we calculate the expectation $n\mathbb{E}\{H_{1}Q_{1}\mid\pi\}$ first
\begin{align}
    n\mathbb{E}\{H_{1}Q_{1}\mid\pi\} = \frac{n+3}{n-1}\sum_{i = 1}^{n}a^{2}_{i\pi(i)}+ \frac{1}{n-1}\sum_{i\neq j}(a^{2}_{i\pi(j)}+a_{i\pi(i)}a_{j\pi(j)}+a_{i\pi(j)}a_{j\pi(i)}).
\end{align}
Then applying Cauchy's inequality, the first term of (\ref{inequ:1/2lambda-D-Delta-etW-decompose-HQ}) is bounded by three parts $J_{1}, J_{2}, J_{3}$,
where
\begin{align}
    J_{1} = & \mathbb{E}\left\{\left(\sum_{i = 1}^{n}a^{2}_{i\pi(i)} - \mathbb{E}\left\{\sum_{i = 1}^{n}a^{2}_{i\pi(i)}\right\}\right)^2e^{tW_n}\right\},\notag\\
    J_{2} = & \mathbb{E}\left\{\left(\frac{1}{n}\sum_{i \neq j}a^{2}_{i\pi(j)} - \mathbb{E}\left\{\frac{1}{n}\sum_{i \neq j}a^{2}_{i\pi(j)}\right\}\right)^2e^{tW_n}\right\},\notag\\
    J_{3} = & \mathbb{E}\left\{\left(\frac{1}{n}\sum_{i \neq j}a_{i\pi(i)}a_{j\pi(j)} - \mathbb{E}\left\{\frac{1}{n}\sum_{i \neq j}a_{i\pi(i)}a_{j\pi(j)}\right\}\right)^2e^{tW_n}\right\}.\notag
\end{align}
Next, we establish a useful lemma that will help us bound $J_1$, $J_2$ and $J_3$.
\begin{lem}\label{lem-E(xetW)}
Let $W$ be defined in (\ref{normalized-DIPS-a-b-form}) which satisfies
 (\ref{equ-a-condition-origin}), (\ref{equ-b-condition-origin}) and (\ref{equ-boundedness-condition-DIPS}), 
then we have for $0<t<\frac{1}{\delta}$,
\begin{align}
    &\mathbb{E}\left\{\left(\sum_{i = 1}^{n}a^{2}_{i\pi(i)} - \mathbb{E}\left\{\sum_{i = 1}^{n}a^{2}_{i\pi(i)}\right\}\right)^2e^{tW_n}\right\} \leq  C(n\delta^4+n^2\delta^6t^2)h(t),\label{inequ:final-bound-a-square-form}\\
    &\mathbb{E}\left\{\left(\frac{1}{n}\sum_{i \neq j}a^{2}_{i\pi(j)} - \mathbb{E}\left\{\frac{1}{n}\sum_{i \neq j}a^{2}_{i\pi(j)}\right\}\right)^2e^{tW_n}\right\} \leq  C(n\delta^4+n^2\delta^6t^2)h(t),\label{inequ:final-bound-aij-square-form}\\
    &\mathbb{E}\left\{\left(\sum_{i\neq j}b^{2}_{ij\pi(i)\pi(j)} - \mathbb{E}\left\{\sum_{i\neq j}b^{2}_{ij\pi(i)\pi(j)}\right\}\right)^2e^{tW_n}\right\} \leq  C(n\delta^4+n^2\delta^6t^2)h(t),\label{inequ:final-bound-b-square-form}\\
    &\mathbb{E}\left\{\left(\sum_{i\neq j}b_{ij\pi(i)\pi(j)}b_{ji\pi(j)\pi(i)} - \mathbb{E}\left\{\sum_{i\neq j}b_{ij\pi(i)\pi(j)}b_{ji\pi(j)\pi(i)}\right\}\right)^2e^{tW_n}\right\} \leq  C(n\delta^4+n^2\delta^6t^2)h(t),\label{inequ:final-bound-bjiij-square-form}
\end{align}
\end{lem}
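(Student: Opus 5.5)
Each of the four quantities has the form $V-\mathbb{E}V$, where $V=\sum_{i}c_{i\pi(i)}$ or $V=\sum'_{i,j}c_{ij\pi(i)\pi(j)}$ is itself a (single or double) index permutation statistic. The kernel $c$ is quadratic in the entries of $a$ or $b$: for instance $c_{ik}=a_{ik}^2$, $c_{ik}=\frac1n\sum_{j\neq i}a_{jk}^2$, $c_{ijkl}=b_{ijkl}^2$ or $b_{ijkl}b_{jilk}$. I will treat the four bounds uniformly, using a Stein/exchangeable-pair argument for $G:=V-\mathbb{E}V$ with the same transposition pair $(\pi,\pi')$.

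\textbf{Step 1 (centring and the identity).} First I centre the kernel of $G$ as in Proposition \ref{general-DIPS-convert}: $G=\sum_i \tilde c(i,\pi(i))+\sum'_{i,j}\tilde c^*(i,j,\pi(i),\pi(j))$, with all marginals of $\tilde c,\tilde c^*$ vanishing. The conditions (\ref{equ-boundedness-condition-DIPS}) give the entrywise bound $|\tilde c|\le C\delta^2$, and the row sums of $\tilde c^*$ are also $O(\delta^2)$. For $G$ I build an antisymmetric $D_G$ exactly as $D$ was built for $W_n$, so that
\[
\mathbb{E}\{D_G\mid\pi\}=\tfrac1n(G+R_G),\qquad |D_G|,\ |G-G'|\le C\delta^2 ,
\]
with $|R_G|\le C\delta^2$. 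Antisymmetry applied to $f(\pi)=G\,e^{tW_n}$ then yields
\[
\mathbb{E}\{G^2e^{tW_n}\}=\tfrac n2\,\mathbb{E}\big\{D_G\,(G e^{tW_n}-G'e^{tW_n'})\big\}-\mathbb{E}\{R_G G e^{tW_n}\}.
\]

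\textbf{Step 2 (expanding the difference).} I write $Ge^{tW}-G'e^{tW'}=(G-G')e^{tW}+G'(e^{tW}-e^{tW'})$. Since $|\Delta|\le16\delta$ and $t\le1/\delta$, we have $|e^{tW}-e^{tW'}|\le C t|\Delta|e^{tW}$. This produces three contributions.
\begin{itemize}
\item[(i)] $\frac n2\mathbb{E}\{D_G(G-G')e^{tW}\}$. Averaging over $(I,J)$ and using the vanishing marginals, $n\,\mathbb{E}\{D_G(G-G')\mid\pi\}$ is a sum of $O(n)$ products of entries of size $\delta^2\cdot\delta^2$. The row-sum bounds collapse the double sums, so this conditional expectation is at most $Cn\delta^4$, giving the term $Cn\delta^4h(t)$.
\item[(ii)] $\frac n2 t\,\mathbb{E}\{|D_G||\Delta||G'|e^{tW}\}$. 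Writing $G'=G-(G-G')$, I bound it by $Cnt\,\mathbb{E}\{\mathbb{E}(|D_G\Delta|\mid\pi)|G|e^{tW}\}$ plus a lower-order piece. Here $n\,\mathbb{E}\{|D_G\Delta|\mid\pi\}\le Cn\delta^3$, because $D_G$ has a single index $I$ with row-sum control $\delta^2$ and $|\Delta|\le C\delta$. Cauchy–Schwarz then gives $Cnt\delta^3\sqrt{\mathbb{E}\{G^2e^{tW}\}}\sqrt{h(t)}$.
\item[(iii)] The $R_G$ term is at most $C\delta^2\sqrt{\mathbb{E}\{G^2e^{tW}\}h(t)}$.
\end{itemize}

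\textbf{Step 3 (closing the bound).} Set $x^2=\mathbb{E}\{G^2e^{tW}\}/h(t)$. The contributions above give
\[
x^2\le Cn\delta^4+C(nt\delta^3+\delta^2)\,x .
\]
Solving this quadratic inequality yields $x^2\le C(n\delta^4+n^2\delta^6t^2+\delta^4)$, and $\delta^4\le n\delta^4$ absorbs the last term. This is the claimed bound.

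\textbf{Main obstacle.} The delicate step is (i): showing that the conditional second moment $n\,\mathbb{E}\{D_G(G-G')\mid\pi\}$ is uniformly $O(n\delta^4)$ for every $\pi$, rather than merely in expectation. This requires tracking, for each kernel (especially the $b$-based ones (\ref{inequ:final-bound-b-square-form}), (\ref{inequ:final-bound-bjiij-square-form})), which indices carry sums. The last condition of (\ref{equ-boundedness-condition-DIPS}), the mixed row/column sums $\le\delta$, must be used to turn $\sum_{s}|b|^2$-type sums into $\delta^2\cdot\max|b|\cdot\#$. I would first do this computation for $\sum_i a^2_{i\pi(i)}$, where it is a routine combinatorial CLT calculation, and then mimic it term by term for the $b$-kernels.
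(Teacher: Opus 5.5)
Your argument is correct, and it takes a genuinely different and much shorter route than the paper.

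\textbf{The paper's route.} It expands the square directly. For (\ref{inequ:final-bound-a-square-form}) it writes $\mathbb{E}\{G^2e^{tW_n}\}=J_1-J_2+\frac1{n^2}(\sum_{i,j}a_{ij}^2)^2h(t)+O(n\delta^4)h(t)$. It then conditions on $\pi(i)=k,\pi(j)=l$ and Taylor-expands $e^{tW_n}$ to second order around the leave-two-out statistic $W_n^{(i,j)}$. The crux is the uniform estimate $|\mathbb{E}\{e^{tW_n^{(i,j)}}\mid \pi(i)=k,\pi(j)=l\}-h(t)|\le C(n^{-1}+t^2\delta^2)h(t)$. This comes from the coupling of Lemma \ref{lem-same-distribution-of-permutation}, a four-group expansion of $T_n-S^{(i,j)}_{\sigma^{ij}_{kl}}$ using the vanishing marginals, and Lemma \ref{bound-H1-H4-general}. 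The main terms $\frac{1}{n(n-1)}(\sum a^2)^2h(t)$ and $\frac{2t}{n^2}\sum a^3a^2h(t)$ then cancel exactly between $J_1$, $J_2$ and the constant, and likewise via $Q_1,Q_2$ for the $b$-kernels.

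\textbf{Your route.} You apply the antisymmetric-pair identity to $G$ itself with test function $Ge^{tW_n}$. This gives the self-bounding inequality $x^2\le Cn\delta^4+C(nt\delta^3+\delta^2)x$. The only inputs are the pointwise bounds $|D_G|,|G-G'|,|R_G|\le C\delta^2$, together with $|\Delta|\le16\delta$ and $t\delta\le1$. There is no conditional moment generating function comparison and no reliance on exact cancellation, and the same argument covers all four kernels.

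\textbf{What each buys.} The paper's heavier machinery produces the estimate (\ref{general-final-bound-PsiWn-i1-ik-ht}), which it reuses for $J_3$--$J_7$ and $A_3$--$A_7$ in the proof of Theorem \ref{thm-general-double-index-permutation-statistics-a-b-form}. With your approach, those terms would have to be handled by rerunning the same trick. That works for any centred DIPS whose $D_G$, increments and remainder are pointwise $O(\delta^2)$.

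Two minor remarks.

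First, the step you flag as the main obstacle is immediate once $|D_G|,|G-G'|\le C\delta^2$ hold pointwise, since then $\frac n2|\mathbb{E}\{D_G(G-G')e^{tW_n}\}|\le Cn\delta^4h(t)$. The real work is verifying those pointwise bounds for the centred $b$-kernels. For example, $\sum_s b^2_{Isk\pi(s)}\le \max|b|\sum_{s,l}|b_{Iskl}|\le\delta^2$. For the averaged pieces of $\tilde c^*$ you need the three-index bound $\sum_{i,s,l}|b_{iskl}|\le n\delta$.

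Second, after centring, $\sum'_{i,j}\tilde c^*(i,j,\pi(i),\pi(j))$ has mean $\frac{1}{n(n-1)}\sum_{i,k}\tilde c^*(i,i,k,k)$, which need not vanish. So $G$ differs from your centred form by a deterministic constant of size $O(\delta^2)$, which should simply be absorbed into $R_G$.
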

The proof of Lemma \ref{lem-E(xetW)} can be found in Section \ref{proof-of-other-results}. Then by Lemma \ref{lem-E(xetW)}, we have
\begin{align}
    J_{1} \leq C(n\delta^4+n^2\delta^6t^2)\mathbb{E}\{e^{tW_n}\}, \quad J_{2} \leq   C(n\delta^4+n^2\delta^6t^2)\mathbb{E}\{e^{tW_n}\}.\label{term-a-J1-J2-bound}
\end{align}
Considering $J_{3}$, by condition (\ref{equ-a-condition-origin}) and (\ref{equ-boundedness-condition-DIPS}) we obtain
\begin{align}
    \mathbb{E}\left\{\frac{1}{n}\sum_{i\neq j}a_{i\pi(i)}a_{j\pi(j)}\right\} = \frac{1}{n^3(n-1)}\sum_{i,j}a^{2}_{ij} \leq \frac{\delta^2}{n},\notag
\end{align}
therefore, it follows that
\begin{align}
    J_{3} \leq &\frac{1}{n^2}\left|\mathbb{E}\left\{\sum_{i\neq j}\sum_{k\neq l}a_{i\pi(i)}a_{j\pi(j)}a_{k\pi(k)}a_{l\pi(l)}e^{tW_n}\right\}\right| + \frac{2\delta^2}{n^2} \left|\mathbb{E}\left\{\sum_{i\neq j}a_{i\pi(i)}a_{j\pi(j)}e^{tW_n}\right\}\right|+\frac{\delta^4}{n^2}\mathbb{E}\{e^{tW_n}\}\notag\\
    \leq & \frac{1}{n^2}\left|\mathbb{E}\left\{\sum_{i\neq j}\sum_{k\neq l}a_{i\pi(i)}a_{j\pi(j)}a_{k\pi(k)}a_{l\pi(l)}e^{tW_n}\right\}\right| + C\delta^4\mathbb{E}\{e^{tW_n}\}\notag\\
    \leq &\frac{1}{n^4(n-1^2)}\left|\sum_{i\neq j}\sum_{k\neq l}\sum_{p\neq q}\sum_{u\neq v}a_{ip}a_{jq}a_{ku}a_{lv}\mathbb{E}\left\{e^{tW_n}\middle| \substack{\pi(i) = p,\pi(j) = q\\\pi(k) = u,\pi(l) = v} \right\}\right| + C\delta^4\mathbb{E}\{e^{tW_n}\}\notag\\
    \leq & n^2\delta^4\max_{\substack{i\neq j\neq k\neq l\\p\neq q\neq u\neq v}}\left|\mathbb{E}\left\{e^{tW_n}\middle| \substack{\pi(i) = p,\pi(j) = q\\\pi(k) = u,\pi(l) = v} \right\} - \mathbb{E}\{e^{tW_n}\}\right| + C\delta^4\mathbb{E}\{e^{tW_n}\},\label{term-a-J3-decompose}
\end{align}
where we use (\ref{equ-boundedness-condition-DIPS}) in the last inequality and $\max_{i_{1}\neq \dots \neq i_{k}}$ means $\max_{i_{1},\dots,i_{k}}$ under the condition that $i_{1},\dots,i_{k}$ are all distinct. Applying (\ref{general-final-bound-PsiWn-i1-ik-ht}) in the proof of Lemma \ref{lem-E(xetW)}, we have
\begin{align}
    \max_{\substack{i\neq j\neq k\neq l\\p\neq q\neq u\neq v}}\left|\mathbb{E}\left\{e^{tW_n}\middle| \substack{\pi(i) = p,\pi(j) = q\\\pi(k) = u,\pi(l) = v} \right\} - \mathbb{E}\{e^{tW_n}\}\right| \leq & C(\frac{1}{n}+\delta^2t^2)\mathbb{E}\{e^{tW_n}\}.\label{term-a-J3-bound-difference}
\end{align}
Combing (\ref{term-a-J1-J2-bound})-(\ref{term-a-J3-bound-difference}), we deduce the upper bound of the first term in (\ref{inequ:1/2lambda-D-Delta-etW-decompose-HQ}),
\begin{align}
    \mathbb{E}\{(n\mathbb{E}\{H_{1}Q_{1}\mid\pi\} - n\mathbb{E}\{H_{1}Q_{1}\})^2e^{tW_n}\} \leq & C(n\delta^4+n^2\delta^6t^2)\mathbb{E}\{e^{tW_n}\}.\label{term-a-H1Q1-bound}
\end{align}
Next we consider the term $\mathbb{E}\{(n\mathbb{E}\{H_{1}Q_{3}\mid\pi\} - n\mathbb{E}\{H_{1}Q_{3}\})^2e^{tW_n}\}$ in (\ref{inequ:1/2lambda-D-Delta-etW-decompose-HQ}). Since 
\begin{align}
    n\mathbb{E}\{H_{1}Q_{3}\mid \pi\} = & \frac{n}{n-1}\sum_{i\neq j}a_{i\pi(i)}b_{ij\pi(i)\pi(j)}+ \frac{n}{n-1}\sum_{i\neq j}a_{i\pi(i)}b_{ji\pi(j)\pi(i)}\notag\\
    &+\frac{2}{n-1}\sum_{i\neq j}a_{i\pi(j)}(b_{ji\pi(i)\pi(j)}+b_{ji\pi(j)\pi(i)})+\frac{2}{n-1}\sum_{i\neq j\neq s}a_{i\pi(i)}b_{sj\pi(s)\pi(j)},
\end{align}
this term can be divided into $J_{4}, J_{5}, J_{6}, J_{7}$, where
\begin{align}
    J_{4} = &  \mathbb{E}\left\{\left(\sum_{i \neq j}a_{i\pi(i)}b_{ij\pi(i)\pi(j)} - \mathbb{E}\left\{\sum_{i \neq j}a_{i\pi(i)}b_{ij\pi(i)\pi(j)}\right\}\right)^2e^{tW_n}\right\},\notag\\
    J_{5} = & \mathbb{E}\left\{\left(\sum_{i \neq j}a_{i\pi(i)}b_{ji\pi(j)\pi(i)} - \mathbb{E}\left\{\sum_{i \neq j}a_{i\pi(i)}b_{ji\pi(j)\pi(i)}\right\}\right)^2e^{tW_n}\right\},\notag\\
    J_{6} = & \mathbb{E}\left\{\frac{1}{n^2}\left(\sum_{i \neq j}a_{i\pi(j)}(b_{ji\pi(i)\pi(j)}+b_{ji\pi(j)\pi(i)}) - \mathbb{E}\left\{\sum_{i \neq j}a_{i\pi(j)}(b_{ji\pi(i)\pi(j)}+b_{ji\pi(j)\pi(i)})\right\}\right)^2e^{tW_n}\right\},\notag\\
    J_{7} = & \mathbb{E}\left\{\left(\frac{1}{n}\sum_{i \neq j\neq s}a_{i\pi(i)}b_{sj\pi(s)\pi(j)} - \mathbb{E}\left\{\frac{1}{n}\sum_{i \neq j\neq s}a_{i\pi(i)}b_{sj\pi(s)\pi(j)}\right\}\right)^2e^{tW_n}\right\}.\notag\\
\end{align}
We use a similar approach of (\ref{term-a-J3-decompose}) to get the upper bounds of these four parts. The proofs of these four parts are very similar. Here, we present the proof of the upper bound of $J_{4}$ as a representative, and the proofs of the other three parts can be obtained in the same argument.
Applying condition (\ref{equ-a-condition-origin}), (\ref{equ-b-condition-origin}) and (\ref{equ-boundedness-condition-DIPS}), we get
\begin{align}
    \mathbb{E}\left\{\sum_{i \neq j}a_{i\pi(i)}b_{ij\pi(i)\pi(j)}\right\} = \frac{1}{n(n-1)}\sum_{i,j}a_{ij}b_{iijj} \leq 2\delta^2,\notag
\end{align}
it then follows that
\begin{align}
    J_{4} \leq & \left|\mathbb{E}\left\{\sum_{i\neq j}\sum_{k\neq l}a_{i\pi(i)}b_{ij\pi(i)\pi(j)}a_{k\pi(l)}b_{kl\pi(k)\pi(l)}e^{tW_n} \right\}\right| + C n\delta^4\mathbb{E}\{e^{tW_n}\}\notag\\
    \leq & \frac{1}{n^2(n-1)^2}\left|\sum_{i\neq j}\sum_{k\neq l}\sum_{p\neq q}\sum_{u\neq v}a_{ip}b_{ijpq}a_{ku}b_{kluv}\mathbb{E}\left\{e^{tW_n}\middle| \substack{\pi(i) = p,\pi(j) = q\\ \pi(k) = u,\pi(l) = v}\right\}\right| + Cn\delta^4\mathbb{E}\{e^{tW_n}\}\notag\\
    \leq & Cn^2\delta^4 \max_{\substack{i\neq j\neq k\neq l\\p\neq q\neq u\neq v}}\left|\mathbb{E}\left\{e^{tW_n}\middle| \substack{\pi(i) = p,\pi(j) = q\\\pi(k) = u,\pi(l) = v} \right\} - \mathbb{E}\{e^{tW_n}\}\right| + Cn\delta^4\mathbb{E}\{e^{tW_n}\},\notag
\end{align}
where we use (\ref{equ-boundedness-condition-DIPS}) in the last inequality. Applying (\ref{term-a-J3-bound-difference}), we get 
\begin{align}
    J_{4} \leq & C(n\delta^4+n^2\delta^6t^2)\mathbb{E}\{e^{tW_n}\}.\label{term-ab-J4-bound}
\end{align}
By a same argument, we have
\begin{align}
    J_{5} \leq & C(n\delta^4+n^2\delta^6t^2)\mathbb{E}\{e^{tW_n}\}, \ J_{6} \leq C(n\delta^4+n^2\delta^6t^2)\mathbb{E}\{e^{tW_n}\}, \ J_{7} \leq C(n\delta^4+n^2\delta^6t^2)\mathbb{E}\{e^{tW_n}\}.\label{term-ab-J5-J6-J7-bound}
\end{align}
Together with (\ref{term-ab-J4-bound}) and (\ref{term-ab-J5-J6-J7-bound}), we obtain
\begin{align}
    \mathbb{E}\{(n\mathbb{E}\{H_{1}Q_{3}\mid\pi\} - n\mathbb{E}\{H_{1}Q_{3}\})^2e^{tW_n}\} \leq & C(n\delta^4+n^2\delta^6t^2)\mathbb{E}\{e^{tW_n}\}.\label{term-ab-H1Q3-bound}
\end{align}
By using the same argument, we obtain the upper bounds of the other three terms in (\ref{inequ:1/2lambda-D-Delta-etW-decompose-HQ}),
\begin{align}
    \mathbb{E}\{(n\mathbb{E}\{H_{1}Q_{2}\mid\pi\} - n\mathbb{E}\{H_{1}Q_{2}\})^2e^{tW_n}\} \leq & C(n\delta^4+n^2\delta^6t^2)\mathbb{E}\{e^{tW_n}\},\notag\\
    \mathbb{E}\{(n\mathbb{E}\{H_{1}Q_{4}\mid\pi\} - n\mathbb{E}\{H_{1}Q_{4}\})^2e^{tW_n}\} \leq & C(n\delta^4+n^2\delta^6t^2)\mathbb{E}\{e^{tW_n}\},\notag\\
    \mathbb{E}\{(n\mathbb{E}\{H_{2}Q_{1}\mid\pi\} - n\mathbb{E}\{H_{2}Q_{1}\})^2e^{tW_n}\} \leq & C(n\delta^4+n^2\delta^6t^2)\mathbb{E}\{e^{tW_n}\}.
\end{align}
Next we consider the term $\mathbb{E}\{(n\mathbb{E}\{H_{2}Q_{3}\mid\pi\} - n\mathbb{E}\{H_{2}Q_{3}\})^2e^{tW_n}\}$ in (\ref{inequ:1/2lambda-D-Delta-etW-decompose-HQ}). Since
\begin{align}
    &n\mathbb{E}\{H_{2}Q_{3}\mid \pi\}\notag\\
    = &\frac{n-2}{n-1}\sum_{i\neq j}b^{2}_{ij\pi(i)\pi(j)} + \frac{n-2}{n-1}\sum_{i\neq j}b_{ij\pi(i)\pi(j)}b_{ji\pi(j)\pi(i)}\notag\\
    &+\frac{n-3}{n-1}\sum_{i\neq j\neq s}b_{ij\pi(i)\pi(j)}(b_{is\pi(i)\pi(s)}+b_{si\pi(s)\pi(i)})\notag\\
    &+\frac{1}{n-1}\sum_{i\neq j\neq s}b_{ij\pi(i)\pi(j)}(b_{sj\pi(s)\pi(j)}+b_{js\pi(j)\pi(s)})\notag\\
    &+ \frac{2}{n-1}\sum_{i\neq j\neq p\neq q}b_{ij\pi(i)\pi(j)}b_{pq\pi(p)\pi(q)}\notag\\
    &- \frac{1}{n-1}\sum_{i\neq j\neq p\neq q}b_{ip\pi(j)\pi(p)}(b_{qj\pi(q)\pi(j)}+b_{qi\pi(q)\pi(i)}+b_{jq\pi(j)\pi(q)}+b_{iq\pi(i)\pi(q)})\notag\\
    &-\frac{1}{n-1}\sum_{i\neq j\neq s}b_{is\pi(j)\pi(s)}(b_{sj\pi(s)\pi(j)}+b_{si\pi(s)\pi(i)}+b_{js\pi(j)\pi(s)}+b_{is\pi(i)\pi(s)}) \notag\\
    : = & \sum_{i = 1}^{7}A_{i},
\end{align}
by using Cauchy's inequality, the term $\mathbb{E}\{(n\mathbb{E}\{H_{2}Q_{3}\mid\pi\} - n\mathbb{E}\{H_{2}Q_{3}\})^2e^{tW_n}\}$ is divided into seven parts,
\begin{align}
    \sum_{i = 1}^{7}\mathbb{E}\{(A_{i} - \mathbb{E}(A_{i}))^2e^{tW_n}\}.\label{inequ:H2Q3-etW-decompose}
\end{align}
We divide (\ref{inequ:H2Q3-etW-decompose}) into two groups, with terms including $A_{1},A_{2}$ as one group and the remaining terms including $A_{3}-A_{7}$ as another group, mainly based on their expectations. Following condition (\ref{equ-b-condition-origin}) and (\ref{equ-boundedness-condition-DIPS}), we calculate the expectation of $A_{1}-A_{7}$ as follows
\begin{align}
    |\mathbb{E}\{A_{1}\}| \leq Cn\delta^2,\quad |\mathbb{E}\{A_{2}\}| \leq Cn\delta^2, \quad \max_{i\in\{2,\dots,15\}}|\mathbb{E}\{A_{i}\}| \leq C\delta^2.
\end{align}
Note that the expectations of $A_{1}$ and $A_{2}$ are of order $n\delta^2$, while the expectations of $A_{3}-A_{7}$ are of order $\delta^2$. This difference leads us to adopt different approaches when analyzing the upper bounds of these two groups. We first consider the first group which including terms $\mathbb{E}\{(A_{1} - \mathbb{E}(A_{1}))^2e^{tW_n}\}$, and $\mathbb{E}\{(A_{2} - \mathbb{E}(A_{2}))^2e^{tW_n}\}$. These two terms can be bounded directly by using the result of Lemma \ref{lem-E(xetW)},
\begin{align}
    \sum_{i = 1}^{2}\mathbb{E}\{(A_{i} - \mathbb{E}(A_{i}))^2e^{tW_n}\} \leq & C(n\delta^4+n^2\delta^6t^2)\mathbb{E}\{e^{tW_n}\}.\label{term-b-H2Q3-A1A2-bound}
\end{align}
For the second group including terms $\mathbb{E}\{(A_{i} - \mathbb{E}(A_{i}))^2e^{tW_n}\}, i = 3,\dots,7$, we use a similar approach of (\ref{term-a-J3-decompose}) to get the upper bounds of these five parts. The proofs of these five parts are very similar. Here, we present the proof of the upper bound of $\mathbb{E}\{(A_{3} - \mathbb{E}(A_{3}))^2e^{tW_n}\}$ as a representative, and the proofs of the other four parts can be obtained in the same way.
Applying condition (\ref{equ-b-condition-origin}) and (\ref{equ-boundedness-condition-DIPS}), we have
\begin{align}
    &\mathbb{E}\{(A_{3} - \mathbb{E}\{A_{3}\})^2e^{tW_n}\}\notag\\
    \leq & C\left|\mathbb{E}\left\{\sum_{i\neq j\neq p}\sum_{k\neq l\neq q}b_{ij\pi(i)\pi(j)}b_{ip\pi(i)\pi(p)}b_{kl\pi(k)\pi(l)}b_{kq\pi(k)\pi(q)} e^{tW_n}\right\}\right|\notag\\
    & + C\delta^2\left|\mathbb{E}\left\{\sum_{i\neq j\neq s}b_{ij\pi(i)\pi(j)}b_{is\pi(i)\pi(s)} e^{tW_n}\right\}\right| + C\delta^4h(t)\notag\\
    \leq & Cn^2\delta^4\max_{B_{6}}\left|\mathbb{E}\left\{\Psi_{t}(W_n)\middle|\substack{\pi(i) = i^{\prime}, \pi(j) = j^{\prime}\\\pi(k) = k^{\prime},\pi(l) = l^{\prime}\\\pi(p) = p^{\prime},\pi(q) = q^{\prime}}\right\} - h(t)\right| + C \delta^4 h(t) \notag\\
    &+ Cn\delta^4\max_{B_{6}}\left(\mathbb{E}\left\{\Psi_{t}(W_{n})\middle|\substack{\pi(i) = i^{\prime}\\ \pi(j) = j^{\prime}\\\pi(k) = k^{\prime}\\\pi(l) = l^{\prime}\\\pi(p) = p^{\prime} }\right\} + \mathbb{E}\left\{\Psi_{t}(W_{n})\middle|\substack{\pi(i) = i^{\prime}\\ \pi(j) = j^{\prime}\\\pi(k) = k^{\prime}\\\pi(l) = l^{\prime} }\right\} + \mathbb{E}\left\{\Psi_{t}(W_{n})\middle|\substack{\pi(i) = i^{\prime}\\ \pi(j) = j^{\prime}\\\pi(k) = k^{\prime} }\right\} \right),\label{term-b-H2Q3-A3-decompose}
\end{align}
where $B_{6}$ denotes the set of indices $i,j,k,l,p,q\in[n]$ are all distinct and $i^{\prime},j^{\prime},k^{\prime},l^{\prime},p^{\prime},q^{\prime}\in[n]$ are also all distinct. We define $\sigma^{ijklp}_{i^{\prime}j^{\prime}k^{\prime}l^{\prime}p^{\prime}}: = \mathcal{P}^{p}_{p^{\prime}}\circ\mathcal{P}^{kl}_{k^{\prime}l^{\prime}}\circ\mathcal{P}^{ij}_{i^{\prime}k^{\prime}}\circ \sigma$ and $S_{\sigma^{ijklp}_{i^{\prime}j^{\prime}k^{\prime}l^{\prime}p^{\prime}}} = \sum_{i = 1}^{n}a_{i\sigma^{ijklp}_{i^{\prime}j^{\prime}k^{\prime}l^{\prime}p^{\prime}}(i)}+\sum_{i\neq j}b_{ij\sigma^{ijklp}_{i^{\prime}j^{\prime}k^{\prime}l^{\prime}p^{\prime}}(i)\sigma^{ijklp}_{i^{\prime}j^{\prime}k^{\prime}l^{\prime}p^{\prime}}(j)}$, where $\sigma$ is a random permutation chosen uniformly from $\mathcal{S}_n$ and independent of $\pi$. Before proceeding to the next step, we introduce a key auxiliary lemma that will be critical for identifying the conditional distribution of $W_n$.
\begin{lem}\label{lem-same-distribution-of-permutation}
Let $\pi$ and $\sigma$ be two independent random permutations, chosen uniformly from $S_n$. Suppose $i\neq j$ and $k\neq l$ are elements of $[1,\dots,n]$ and denote $\tau_{i,j}$ the transposition of $i$ and $j$, then
\begin{align}
    \sigma^{i}_{k}: = \mathcal{P}^{i}_{k}\sigma = \begin{cases}
        \sigma, & \sigma(i) = k,\\
        \sigma\circ\tau_{i,\sigma^{-1}(k)}, & \sigma(i) \neq k,
    \end{cases}\label{def-transformation-single}
\end{align}
\begin{align}
    \sigma^{ij}_{kl}: = \mathcal{P}^{ij}_{kl}\sigma = \begin{cases}
        \sigma\circ\tau_{i,\sigma^{-1}(k)}\circ\tau_{j,\sigma^{-1}(k)}, & \sigma(i) = l, \sigma(j) \neq  k,\\
        \sigma\circ\tau_{j,\sigma^{-1}(l)}\circ\tau_{i,\sigma^{-1}(l)}, & \sigma(i) \neq l, \sigma(j) = k,\\
        \sigma\circ\tau_{i,\sigma^{-1}(k)}\circ\tau_{j,\sigma^{-1}(l)}\circ \tau_{i,j}, &\sigma(i) = l, \sigma(j) = k,\\
        \sigma\circ\tau_{i,\sigma^{-1}(k)}\circ\tau_{j,\sigma^{-1}(l)}, & otherwise,
    \end{cases}\label{def-transformation-sigma-prime}
\end{align}
are two permutations that satisfy
\begin{align}
    \mathcal{L}\left(\sigma^{i}_{k}\right) \overset{d}{=}\mathcal{L}\left(\pi\mid \pi(i) = k\right),\label{same-distribution-of-permutation-single-transformation}
\end{align}
and
\begin{align}
    \mathcal{L}\left(\sigma^{ij}_{kl}\right) \overset{d}{=}\mathcal{L}\left(\pi\middle|\substack{\pi(i) = k\\ \pi(j) = l}\right).\label{same-distribution-of-permutation-double-transformation}
\end{align}
And for any elements $p\neq q, u\neq v$ of $[n]$ satisfying $p,q\notin\{i,j\}$, $u,v\notin\{k,l\}$, we have
\begin{align}
    \mathcal{L}\left(\mathcal{P}^{p}_{q}\sigma^{ij}_{kl}\right) \overset{d}{=}\mathcal{L}\left(\pi\middle| \substack{\pi(i) = k\\ \pi(j) = l\\ \pi(p) = q}\right), \quad \mathcal{L}\left(\mathcal{P}^{pq}_{uv}\sigma^{ij}_{kl}\right)\overset{d}{=}\mathcal{L}\left(\pi\middle|\substack{\pi(i) = k\\\pi(j) = l\\\pi(p) = u\\\pi(q) = v}\right).\label{composite-transformation}
\end{align}
\end{lem}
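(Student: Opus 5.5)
The plan is to avoid counting preimages case by case and instead use an invariance argument. Let $A^{i}_{k}=\{\tau\in S_n:\tau(i)=k\}$ and $A^{ij}_{kl}=\{\tau\in S_n:\tau(i)=k,\tau(j)=l\}$. Since $\pi$ is uniform on $S_n$, $\mathcal{L}(\pi\mid\pi(i)=k)$ is the uniform law on $A^{i}_{k}$, and similarly for the other conditionings. Each claimed identity therefore reduces to two facts about the transformed permutation:
\begin{itemize}
\item[(a)] it always lands in the correct set $A$;
\item[(b)] its law is invariant under right multiplication by the subgroup $G=\{\rho\in S_n:\rho \text{ fixes the conditioned positions pointwise}\}$.
\end{itemize}
These suffice because $G$ acts transitively on $A$ by right multiplication: if $\tau_1,\tau_2\in A$, then $\tau_1^{-1}\tau_2\in G$. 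Any $G$-invariant law on a single $G$-orbit is uniform on it.

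For (b) I will prove an equivariance identity, namely $\mathcal{P}(\sigma\circ\rho)=\mathcal{P}(\sigma)\circ\rho$ for $\rho\in G$. Since $\sigma\circ\rho\overset{d}{=}\sigma$, this gives $\mathcal{P}(\sigma)\circ\rho\overset{d}{=}\mathcal{P}(\sigma)$. For the single transformation, the key facts are $(\sigma\rho)^{-1}(k)=\rho^{-1}(\sigma^{-1}(k))$ and, because $\rho(i)=i$,
\begin{align*}
\rho\circ\tau_{i,\rho^{-1}(m)}=\tau_{i,m}\circ\rho .
\end{align*}
Hence $\sigma\rho\,\tau_{i,\rho^{-1}\sigma^{-1}(k)}=\sigma\tau_{i,\sigma^{-1}(k)}\rho$. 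The case $\sigma(i)=k$ is preserved, since $\sigma\rho(i)=\sigma(i)$. Property (a) is immediate: $\sigma\tau_{i,\sigma^{-1}(k)}(i)=k$. This proves (\ref{same-distribution-of-permutation-single-transformation}).

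For the double transformation I repeat the argument with $G=\mathrm{Stab}(i)\cap\mathrm{Stab}(j)$. The case split in (\ref{def-transformation-sigma-prime}) is expressed only through the values $\sigma(i),\sigma(j)$, and these are unchanged by $\sigma\mapsto\sigma\rho$. Within each case the transpositions $\tau_{i,\cdot}$, $\tau_{j,\cdot}$, $\tau_{i,j}$ conjugate past $\rho$ by the same identity as above, so equivariance holds case by case. The more delicate part is (a): I must check in each of the four cases that the output sends $i\mapsto k$ and $j\mapsto l$. I will do this by tracking $i$ and $j$ through the composed transpositions, using the case hypotheses to decide whether $\sigma^{-1}(k)$ or $\sigma^{-1}(l)$ coincides with $i$ or $j$. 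I will also record the degenerate subcases such as $\sigma(i)=k$, where some transposition is the identity. This bookkeeping is the main obstacle. If the formulas as written turn out to misbehave in an edge case, I will replace $\mathcal{P}^{ij}_{kl}$ by $\mathcal{P}^{j}_{l}$ restricted to positions other than $i$, applied after $\mathcal{P}^{i}_{k}$. That is, the second swap is $\tau_{j,m}$ with $m=(\sigma^{i}_{k})^{-1}(l)\neq i$, which lands in $A^{ij}_{kl}$ automatically and is equivariant by the same argument.

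For (\ref{composite-transformation}), I condition on $\sigma^{ij}_{kl}$, which by the previous step is uniform on $A^{ij}_{kl}$. Now apply $\mathcal{P}^{p}_{q}$ (respectively $\mathcal{P}^{pq}_{uv}$). Because the new target values lie outside $\{k,l\}$, their preimages under $\sigma^{ij}_{kl}$ lie outside $\{i,j\}$. Hence every transposition used involves neither $i$ nor $j$, and the values at $i$ and $j$ are preserved; this gives (a). For (b), equivariance under $\rho$ fixing $i,j$ and the new positions follows exactly as before. Moreover, $A^{ij}_{kl}$ is invariant under right multiplication by such $\rho$. Transitivity on the target set $\{\tau\in A^{ij}_{kl}:\tau(p)=q\}$ (or the four-constraint analogue) then gives uniformity, which is the stated conditional law of $\pi$.
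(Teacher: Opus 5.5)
Your argument is correct, and it takes a genuinely different route from the paper's.

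The paper proves (\ref{same-distribution-of-permutation-double-transformation}) by explicit preimage counting, following the proof of Lemma 4.5 in Chen, Goldstein and Shao. It fixes the values $p_m$, $m\notin\{i,j\}$, and splits according to the four cases of (\ref{def-transformation-sigma-prime}), with the last case further split by $R=|\{\sigma(i),\sigma(j)\}\cap\{k,l\}|$. Summing the contributions gives $\frac{(n-2)(n-3)+4(n-2)+2}{n!}=\frac{1}{(n-2)!}$. The identities in (\ref{composite-transformation}) are then obtained from a seven-case table.

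You replace this bookkeeping with equivariance under the pointwise stabilizer together with transitivity. The only casework left is the deterministic check that the output lies in $A^{ij}_{kl}$, and that check does go through for the maps as written. In the first case $\sigma^{-1}(k)\notin\{i,j\}$, which gives $i\mapsto\sigma^{-1}(k)\mapsto k$ and $j\mapsto\sigma^{-1}(k)\mapsto i\mapsto l$. In the last case $\sigma^{-1}(l)\notin\{i,\sigma^{-1}(k)\}$, so $\tau_{j,\sigma^{-1}(l)}$ fixes $i$ and $\tau_{i,\sigma^{-1}(k)}$ fixes $\sigma^{-1}(l)$. So your fallback, which would alter the very map the lemma is about, is not needed.

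What your route buys is brevity and scalability. The same two observations cover, by induction, the iterated maps $\sigma^{i_1,\dots,i_k}_{l_1,\dots,l_k}$ used in the proof of Lemma \ref{bound-H1-H4-general}; the paper justifies these only by ``applying the lemma $k/2$ times''. Your argument also isolates the hypothesis actually needed in (\ref{composite-transformation}): the new target values must avoid $\{k,l\}$. For $\mathcal{P}^{p}_{q}$ this means $q\notin\{k,l\}$, not $q\notin\{i,j\}$ as literally stated.

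The counting route, in exchange, exhibits the fibres explicitly: each $\tau\in A^{ij}_{kl}$ has exactly $n(n-1)$ preimages, split among the cases in a known way. It also stays within the combinatorial template standard in the literature on combinatorial central limit theorems.
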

The proof of Lemma \ref{lem-same-distribution-of-permutation} can be found in Section \ref{proof-of-other-results}. Then, using Lemma \ref{lem-same-distribution-of-permutation}, we obtain
\begin{align}
    \mathcal{L}(S_{\sigma^{ijklp}_{i^{\prime}j^{\prime}k^{\prime}l^{\prime}p^{\prime}}}) = \mathcal{L}\left(W_n\middle| \pi(i) = i^{\prime}, \pi(j) = j^{\prime}, \pi(k) = k^{\prime}, \pi(l) = l^{\prime}, \pi(p) = p^{\prime}\right).\notag
\end{align}
By the definition of $S_{\sigma^{ijklp}_{i^{\prime}j^{\prime}k^{\prime}l^{\prime}p^{\prime}}}$ and using condition (\ref{equ-boundedness-condition-DIPS}), we notice that $|S_{\sigma^{ijklp}_{i^{\prime}j^{\prime}k^{\prime}l^{\prime}p^{\prime}}}-W_n| \leq C\delta$. There for we deduce that
\begin{align}
    \mathbb{E}\left\{\Psi_{t}(W_{n})\middle|\substack{\pi(i) = i^{\prime}\\ \pi(j) = j^{\prime}\\\pi(k) = k^{\prime}\\\pi(l) = l^{\prime}\\\pi(p) = p^{\prime} }\right\} = \mathbb{E}\left\{\Psi_{t}(S_{\sigma^{ijklp}_{i^{\prime}j^{\prime}k^{\prime}l^{\prime}p^{\prime}}})\right\} \leq Ch(t).\notag
\end{align}
By a same argument, the other two conditional expectations in the last line of (\ref{term-b-H2Q3-A3-decompose}) can be bounded in the same way, leading to
\begin{align}
    \mathbb{E}\{(A_{3} - \mathbb{E}(A_{3}))^2e^{tW_n}\} \leq Cn^2\delta^4\max_{B_{6}}\left|\mathbb{E}\left\{\Psi_{t}(W_n)\middle|\substack{\pi(i) = i^{\prime}, \pi(j) = j^{\prime}\\\pi(k) = k^{\prime},\pi(l) = l^{\prime}\\\pi(p) = p^{\prime}, \pi(q) = q^{\prime}}\right\} - h(t)\right| + Cn\delta^4h(t).\notag
\end{align}
Following (\ref{general-final-bound-PsiWn-i1-ik-ht}), we deduce that
\begin{align}
    \mathbb{E}\{(A_{3} - \mathbb{E}(A_{3}))^2e^{tW_n}\} \leq & C(n\delta^4+n^2\delta^6t^2)h(t).\label{term-b-H2Q3-A3-bound}
\end{align}
Therefor, by a same argument it follows that
\begin{align}
    \mathbb{E}\{(A_{i} - \mathbb{E}(A_{i}))^2e^{tW_n}\} \leq & C(n\delta^4+n^2\delta^6t^2)h(t), \quad i = 4,\dots,7.\label{term-b-H2Q3-A4A5A6A7-bound}
\end{align}
Together with (\ref{term-b-H2Q3-A1A2-bound}), (\ref{term-b-H2Q3-A3-bound}) and (\ref{term-b-H2Q3-A4A5A6A7-bound}), we obtain
\begin{align}
    \mathbb{E}\{(n\mathbb{E}\{H_{2}Q_{3}\mid\pi\} - n\mathbb{E}\{H_{2}Q_{3}\})^2e^{tW_n}\} \leq & C(n\delta^4+n^2\delta^6t^2)h(t).\notag
\end{align}
By using the same method, we obtain the upper bounds of the other two terms in (\ref{inequ:1/2lambda-D-Delta-etW-decompose-HQ}),
\begin{align}
    \mathbb{E}\{(n\mathbb{E}\{H_{2}Q_{2}\mid\pi\} - n\mathbb{E}\{H_{2}Q_{2}\})^2e^{tW_n}\} \leq & C(n\delta^4+n^2\delta^6t^2)h(t),\notag\\
    \mathbb{E}\{(n\mathbb{E}\{H_{2}Q_{4}\mid\pi\} - n\mathbb{E}\{H_{2}Q_{4}\})^2e^{tW_n}\} \leq & C(n\delta^4+n^2\delta^6t^2)h(t).\notag
\end{align}
Combining the above bounds for the eight terms in (\ref{inequ:1/2lambda-D-Delta-etW-decompose-HQ}), we conclude that
\begin{align}
    \mathbb{E}\left\{\left(\frac{1}{2\lambda}\mathbb{E}\{D\Delta\mid W_n\} - \frac{1}{2\lambda}\mathbb{E}\{D\Delta\}\right)^2e^{tW_n}\right\} \leq & C(n\delta^4+n^2\delta^6t^2)\mathbb{E}\{e^{tW_n}\}.\label{term-D-Delta-bound}
\end{align}
Substituting (\ref{term-D-Delta-bound}) into (\ref{inequ:1/2lambda-D-Delta-etW-decompose}), we obtain
\begin{align}
    \mathbb{E}\left\{\left|1-\frac{1}{2\lambda}\mathbb{E}\{D\Delta\mid W_n\}\right|e^{tW_n}\right\} \leq C\left(\sqrt{n}\delta^2+n\delta^3+n\delta^3t+1/\sqrt{n}\right)\mathbb{E}\left\{e^{tW_{n}}\right\}.\label{inequ:bound-1/2lambda-D-Delta-etW}
\end{align}
Finally we consider the third condition (A3). As for $\mathbb{E}\{R^2e^{tW_n}\}$, by using cauchy's inequality, and following condition (\ref{equ-boundedness-condition-DIPS}), we deduce that
\begin{align}
    \mathbb{E}\{R^2e^{tW_n}\} = &\mathbb{E}\left\{\left(\frac{1}{n-1}\sum_{i = 1}^{n}a_{i\pi(i)}-\frac{1}{n-1}\sum_{i = 1}^{n}b_{ii\pi(i)\pi(i)}\right)^2e^{tW_n}\right\}\notag\\
    \leq & \frac{2}{(n-1)^2}\left[\mathbb{E}\left\{\left(\sum_{i = 1}^{n}a_{i\pi(i)}\right)^2e^{tW_n}\right\}+\mathbb{E}\left\{\left(\sum_{i = 1}^{n}b_{ii\pi(i)\pi(i)}\right)^2e^{tW_n}\right\}\right]\notag\\
    \leq & 6\delta^2\mathbb{E}\{e^{tW_n}\}. \label{inequ:bound-R-square-etW}
\end{align}
For condition $(A_{3})$, together with (\ref{inequ:bound-R-square-etW}) and using holder's inequality, it follows that
\begin{align}
    \mathbb{E}\{|R|e^{tW}\}\leq \sqrt{\mathbb{E}\{R^2e^{tW_n}\}}\sqrt{\mathbb{E}\{e^{tW_n}\}}\leq \sqrt{6}\delta\mathbb{E}\{e^{tW_n}\}.\label{inequ:bound-R-etW}
\end{align}
Recalling Theorem \ref{thm-general-MD} and combining (\ref{inequ:bound-1/2lambda-D-Delta-etW}), (\ref{inequ:bound-R-etW}), we complete the proof of Theorem \ref{thm-general-double-index-permutation-statistics-a-b-form}.
\end{proof}

\subsection{Proof of Theorem \ref{thm-general-MD}}\label{proof-of-theorem-general-MD-section}
In this subsection, we develop the proof of Theorem \ref{thm-general-MD}. Proposition \ref{prop-moment-generating-function} establishes a bound for $\mathbb{E}\{e^{tW}\}$ via Stein's method. And in Proposition \ref{prop-general-moderate-deviation}, we derive a more general moderate deviation theorem for bounded exchangeable pairs. Finally, Theorem \ref{thm-general-MD} is obtained by combining Proposition \ref{prop-moment-generating-function} with Proposition \ref{prop-general-moderate-deviation}.

\begin{prop}\label{prop-moment-generating-function}
Under the assumption in Theorem \ref{thm-general-MD}. For $0\leq t\leq \min(\tau,\frac{1}{\delta})$, we have
\begin{align}\label{equ-moment-generating-function}
    \mathbb{E}\{e^{tW}\}\leq (1+9\delta)\exp\left(\frac{t^2}{2}(1+t\delta+2\delta_{1}(t))+3t\delta_{2}(t)\right).
\end{align}
\end{prop}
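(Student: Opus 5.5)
The plan is the standard Stein-type differential inequality for $h(t):=\mathbb{E}\{e^{tW}\}$. By (A1) and the boundedness of $\Delta$, $h$ is finite and differentiable on $[0,\min(\tau,1/\delta)]$, with $h'(t)=\mathbb{E}\{We^{tW}\}$.

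\textbf{Step 1: the exchangeable-pair identity.} Applying the identity from condition $(D_1)$ with $f(w)=e^{tw}$ gives
\begin{align}
h'(t)=\frac{1}{2\lambda}\mathbb{E}\left\{D\int_{-\Delta}^{0}te^{t(W+u)}\,du\right\}-\mathbb{E}\{Re^{tW}\}.\notag
\end{align}
The identity itself follows from antisymmetry of $D$ and exchangeability:
\begin{align}
\mathbb{E}\{D(f(W)-f(W'))\}=2\mathbb{E}\{Df(W)\}=2\lambda\mathbb{E}\{(W+R)f(W)\}.\notag
\end{align}

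\textbf{Step 2: expand the integral around $u=0$.} Write $e^{t(W+u)}=e^{tW}+e^{tW}(e^{tu}-1)$. The leading part is $\frac{t}{2\lambda}\mathbb{E}\{D\Delta e^{tW}\}$. Conditioning on $W$ turns this into $t\,\mathbb{E}\{\frac{1}{2\lambda}\mathbb{E}\{D\Delta\mid W\}e^{tW}\}$, which by (A2) is at most $t(1+\delta_1(t))h(t)$.

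For the remainder, use $|u|\le|\Delta|\le\delta$ and $t\delta\le1$ to get $|e^{tu}-1|\le e\,t|u|$. This bounds the remainder by $C\frac{t^2}{\lambda}\mathbb{E}\{|D|\Delta^2e^{tW}\}$.

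\textbf{Step 3: control the remainder (the main obstacle).} The obstacle is controlling $\frac{1}{\lambda}\mathbb{E}\{|D|\Delta^2e^{tW}\}$ without the extra condition in \cite{zhang2023cramer}. I would try $|D|\Delta^2\le\delta\,|D\Delta|$, so the quantity is at most $\frac{\delta}{\lambda}\mathbb{E}\{|D\Delta|e^{tW}\}$.

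If $D\Delta\ge0$ this is $2\delta(1+\delta_1(t))h(t)$ by (A2). In general I would instead expand to second order more carefully, obtaining a term $\frac{t^2}{2}\cdot\frac{1}{2\lambda}\mathbb{E}\{D\Delta^2e^{tW}\}$ plus a third-order remainder. An alternative is to bound $|D|$ and $|\Delta|$ by $\delta$ and reuse Step 1 with $f$ replaced by a cruder function, absorbing the result into the $t^2\delta$ term.

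The target from Steps 1--3 is
\begin{align}
h'(t)\le\bigl(t(1+\delta_1(t))+t^2\delta\cdot c+\delta_2(t)\bigr)h(t)+\text{error}.\notag
\end{align}
Here the $R$-term is handled by (A3): $|\mathbb{E}\{Re^{tW}\}|\le\delta_2(t)h(t)$.

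\textbf{Step 4: integrate.} Divide by $h(t)$ and integrate from $0$ to $t$, using $h(0)=1$ and the monotonicity of $\delta_1,\delta_2$:
\begin{align}
\log h(t)\le\frac{t^2}{2}+t^2\delta_1(t)+\frac{t^3\delta}{2}+t\delta_2(t).\notag
\end{align}
This matches the claimed exponent $\frac{t^2}{2}(1+t\delta+2\delta_1(t))+3t\delta_2(t)$.

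The slack in that exponent, namely the constant $3$ in front of $t\delta_2$ and the prefactor $(1+9\delta)$, absorbs two sources of error. The first is the boundary cost of replacing $\mathbb{E}\{W e^{tW}\}$ comparisons. The second is the conversion from $\mathbb{E}\{|R|e^{tW}\}$ to a multiplicative factor when $h$ is compared with $\mathbb{E}\{e^{tW'}\}$. The latter is handled by $e^{-t\delta}\le e^{t(W'-W)}\le e^{t\delta}\le e$, so that any shift between $W$ and $W'$ costs at most a factor of the form $1+O(\delta)$.
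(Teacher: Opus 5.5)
Your Steps 1 and 2 are correct, but Step 3 is a genuine gap, and it is the whole difficulty of the proposition. After expanding $e^{tu}-1$ you must bound $\frac{1}{\lambda}\mathbb{E}\{|D|\Delta^2e^{tW}\}$ by a multiple of $h(t)$. This is essentially $\frac{\delta}{\lambda}\mathbb{E}\{|D\Delta|e^{tW}\}$. But (A2) only controls the \emph{signed} conditional mean $\frac{1}{2\lambda}\mathbb{E}(D\Delta\mid W)$, while $\frac{1}{\lambda}\mathbb{E}(|D\Delta|\mid W)$ can be as large as $\delta^2/\lambda$, which no hypothesis bounds. Your bound therefore covers only the case $D\Delta\ge 0$. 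That case does not hold for the pair used for DIPS, where $D$ is only the $I$-row part of $\Delta$ and $D\Delta$ changes sign. Going to second order does not help: after symmetrizing, every further term is again of the form $\frac{1}{\lambda}\mathbb{E}\{D\Delta\cdot\Delta^{k}e^{tW}\}$, with the same unsigned factor $D\Delta$. The ``cruder function'' alternative is not an argument. Two smaller points: with an additive error you cannot simply divide by $h$ in Step 4, and $\mathbb{E}\{e^{tW'}\}=h(t)$ by exchangeability, so your account of where $1+9\delta$ comes from does not match any step.

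The paper's device is never to take $|D\Delta|$. Since $w\mapsto e^{tw}$ is increasing, $\bigl|\int_{-\Delta}^0(e^{t(W+u)}-e^{tW})\,du\bigr|\le\Delta\,(e^{tW}-e^{tW'})$, and the right side is nonnegative. So only $|D|\le D^*:=\delta$ is estimated, and exchangeability turns the remainder into a term of the form $t\,\mathbb{E}\{|\frac{1}{2\lambda}\mathbb{E}(D^*\Delta\mid W)|e^{tW}\}$ (Lemma 4.2 of \cite{zhang2023cramer}). The paper then writes this as $\frac{t\delta}{2}\mathbb{E}\{|W+R|e^{tW}\}$ and uses $\mathbb{E}\{|W|e^{tW}\}\le h'(t)+2e/t$. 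It absorbs $h'$ using $t\delta\le 1$ and integrates $h'\le gh+2e\delta$ with an integrating factor. The additive $2e\delta$ is what produces the prefactor $1+3e\delta\le1+9\delta$.

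Note, however, that the step $\frac{1}{2\lambda}\mathbb{E}(\delta\Delta\mid W)=\frac{\delta}{2}(W+R)$ is a regression identity for $\Delta$, whereas $(D_1)$ concerns $D$. So your obstacle is real, not just a matter of technique. Suppose $R=0$ and $\frac{1}{2\lambda}\mathbb{E}(D\Delta\mid W)\equiv1$. Then your Step 1 identity with $f(w)=w^3$, together with the antisymmetry of $D\Delta^2$, gives $\mathbb{E}W^4=3-\frac{1}{4\lambda}\mathbb{E}(D\Delta^3)$. The claimed bound, applied to $\pm W$, forces this to be at most $3+o(1)$ as $\delta\to0$. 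Yet nothing in (A1)--(A3) controls $\frac{1}{\lambda}\mathbb{E}(D\Delta^3)$ unless $D\Delta\ge0$ or $\delta^4/\lambda$ is small, so some extra input is needed.

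For DIPS, where $\lambda=1/n$ and $\delta\asymp n^{-1/2}$, the simplest such input is that $\delta^3/\lambda$ is small. Then your own Step 2 estimate $\frac{et^2}{4\lambda}\mathbb{E}\{|D|\Delta^2e^{tW}\}\le\frac{e\delta^3}{4\lambda}t^2h(t)$ already closes the differential inequality. It gives $\frac{e\delta^3}{12\lambda}t^3$ in the exponent in place of $\frac{t^3\delta}{2}$, which is of the same order $t^3/\sqrt{n}$.
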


\begin{proof}[Proof of Proposition \ref{prop-moment-generating-function}]
    For $t = 0$, it is trivial that (\ref{equ-moment-generating-function}) holds. So we only need to consider $0<t\leq \min(\tau,\frac{1}{\delta})$. Let $h(t) = \mathbb{E}\{e^{tW}\}$. In order to bound $h(t)$, we need to find an upper bound for $h^{\prime}(t)$ using Stein's method, and then obtain a bound for $(\log h(t))^{\prime}$. This technique was firstly considered by \dots

    By conditions $(A_{1})$ and $(A_{3})$, $\mathbb{E}\{e^{tW}\}<\infty$ and $\mathbb{E}\{|R|e^{tW}\}<\infty$. Since condition $(D_{1})$ $\mathbb{E}\{D|W\} = \lambda(W+R)$. We have $\mathbb{E}\{|W|e^{tW}\}<\infty$.

    Under the condition $(D_{1})$, by antisymmetry, it follows that $\mathbb{E}\{D(f(W)+f(W^{\prime}))\} = 0$ for any absolutely continuous function $f:\mathbb{R}\to\mathbb{R}$ satisfying that $\mathbb{E}\{|f(W)|\}<\infty$. We obtain
    \begin{align}
        0&=\mathbb{E}\{D(f(W)+f(W^{\prime}))\}\notag\\
        &=2\mathbb{E}\{Df(W)\} - \mathbb{E}\{D(f(W) - f(W^{\prime}))\}\notag\\
        &=2\lambda\mathbb{E}\{(W+R)f(W)\} -\mathbb{E}\left\{D\int_{-\Delta}^{0}f^{\prime}(W+u)\,du\right\}.\notag
    \end{align}
    Then
    \begin{align}\label{equ_exchage_pari_start}
        \mathbb{E}(Wf(W)) = \frac{1}{2\lambda}\mathbb{E}\left\{D\int_{-\Delta}^{0}f^{\prime}(W+u)\,du\right\} - \mathbb{E}\{Rf(W)\}.
    \end{align}
    Applying (\ref{equ_exchage_pari_start}) with $f(w) = e^{tw}$, we have
    \begin{align}
        h^{\prime}(t) = & \mathbb{E}\{We^{tW}\} = \frac{t}{2\lambda}\mathbb{E}\left\{D\int_{-\Delta}^{0}e^{t(W+u)}\,du\right\} - \mathbb{E}\{Rf(W)\}\notag\\
        \leq & t\mathbb{E}\{e^{tW}\} +t\mathbb{E}\left\{\left|\frac{1}{2\lambda}\mathbb{E}\{D\Delta|W\}-1\right|e^{tW}\right\}+\frac{t}{2\lambda}\left|\mathbb{E}\left\{D\int_{-\Delta}^{0}(e^{t(W+u)} -e^{tW})\,du\right\}\right|\notag\\
        &+\mathbb{E}\{|R|e^{tW}\}\notag\\
        \leq & t\mathbb{E}\{e^{tW}\} + t\mathbb{E}\left\{\left|\frac{1}{2\lambda}\mathbb{E}\{D\Delta|W\}-1\right|e^{tW}\right\}+t\mathbb{E}\left\{\left|\frac{1}{2\lambda}\mathbb{E}\{D^{*}\Delta|W\}\right|e^{tW}\right\}+\mathbb{E}\{|R|e^{tW}\}
    \end{align}
    where $D^{*}:=D^{*}(X,X^{\prime})$ is any random variable such that $D^{*}(X,X^{\prime}) = D^{*}(X^{\prime},X)$ and $D^{*}\geq |D|$. We have this result by using Lemma 4.2 in Zhang (2023) in the last line, and by the boundedness condition we choose
    \begin{align}\label{equ-D-star}
        D^{*} = \delta
    \end{align}
    which is a constant. Then by conditions $(A_2), (A_3)$ in Theorem \ref{thm-general-MD} and condition  $(D_{1})$ we have
    \begin{align}
        h^{\prime}(t)\leq & th(t)+t\delta_{1}(t)h(t)+\frac{t\delta}{2}\mathbb{E}\{|W|e^{tW}\}+(1+\frac{t\delta}{2})\delta_{2}(t)h(t)\notag
    \end{align}
    since $|W| = W + 2W^{-}$ and $xe^{-tx}\leq \frac{e}{t}$, for $t>0$, we have
    \begin{align}\label{inequ:|W|etW}
        \mathbb{E}\{|W|e^{tW}\} = \mathbb{E}\{We^{tW}\}+2\mathbb{E}\{W^{-}e^{tW^{-}}\}\leq \mathbb{E}\{We^{tW}\}+\frac{2e}{t}.
    \end{align}
    then we have
    \begin{align}\label{inequ:h-prime}
        h^{\prime}(t)\leq & th(t)+t\delta_{1}(t)h(t)+\frac{t\delta}{2}(h^{\prime}(t)+\frac{2e}{t})+(1+\frac{t\delta}{2})\delta_{2}(t)h(t)\notag\\
        \leq & \frac{2}{2-t\delta}\left[t(1+\delta_{1}(t))h(t)+(1+\frac{t\delta}{2})\delta_{2}(t)h(t)+e\delta\right]\notag\\
        \leq & [t(1+\frac{t\delta}{2-t\delta}+2\delta_{1}(t))+(2+t\delta)\delta_{2}(t)]h(t)+2e\delta\notag\\
        := & g(t)h(t)+2e\delta.
    \end{align}
    Let $\mu(t) = \exp\left(-\int_{0}^{t}g(s)\,ds\right)$, then we have $\mu(0)=1$ and let both side of the above inequality multiply by $\mu(t)$, we have
    \begin{align}
        \mu(t)h^{\prime}(t)-g(t)\mu(t)h(t) &\leq 2e\delta\mu(t).\notag\\
        \frac{d}{dt}(\mu(t)h(t)) &\leq 2e\delta\mu(t).\notag
    \end{align}
    Integrating both sides from $0$ to $t$, we have
    \begin{align}
        \mu(t)h(t)-\mu(0)h(0) &\leq 2e\delta\int_{0}^{t}\mu(s)\,ds.\notag
    \end{align}
    Note that $h(0) = 1$, we have
    \begin{align}
        h(t) &\leq \mu(t)^{-1}\left(1+2e\delta\int_{0}^{t}\mu(s)\,ds\right).\notag
    \end{align}
    Since $\delta_{1}(t)$ and $\delta_{2}(t)$ are nondecreasing functions, we have
    \begin{align}
        \mu(t) =& \exp\left(-\int_{0}^{t}g(s)\,ds\right) = \exp\left(-\int_{0}^{s}\left[s(1+\frac{s\delta}{2-s\delta}+2\delta_{1}(s))+(2+s\delta)\delta_{2}(s)\right]\,ds\right)\notag\\
        \geq & \exp\left[-\left(\frac{t^2}{2}+\frac{t^3\delta}{3}+t^2\delta_{1}(t)+2t\delta_{2}(t)+\frac{t^2\delta}{2}\delta_{2}(t)\right)\right].\notag
    \end{align}
    And since $\mu(t)\leq e^{-\frac{t^2}{2}}$, we have $\int_{0}^{t}\mu(s)\,ds\leq \int_{0}^{t}e^{-\frac{s^2}{2}}\,ds\leq 1+\int_{1}^{t}\frac{1}{s^3}\,ds \leq \frac{3}{2}$, then
    \begin{align}
        h(t)\leq & \mu(t)^{-1}\left(1+2e\delta\int_{0}^{t}\mu(s)\,ds\right)\notag\\
        \leq &(1+3e\delta)\exp\left(\frac{t^2}{2}(1+t\delta+2\delta_{1}(t))+3t\delta_{2}(t)\right)\notag
    \end{align}
    which implies the desired result (\ref{equ-moment-generating-function}).
\end{proof}

\begin{prop}\label{prop-general-moderate-deviation}
Let $(W,W^{\prime})$, $\Delta$, $D$ and $D^{*}$ be defined as in Theorem \ref{thm-general-MD}. Assume that there exits a constant $\tau_0>0$ such that for all $0\leq t\leq \tau_0$,
\begin{itemize}
    \item [$(B_{1}):$] $\mathbb{E}\{\big|1-\frac{1}{2\lambda}\mathbb{E}\{D\Delta|W\}\big|e^{tW}\}\leq \kappa_1(t)e^{t^2/2}$,
    \item [$(B_{2}):$] $\mathbb{E}\{|R|e^{tW}\}\leq \kappa_2(t)e^{t^2/2}$,
\end{itemize}
where $\kappa_{1}(\cdot)$ and $\kappa_{2}(\cdot)$ are nondecreasing functions satisfying that $\kappa_{1}(\tau_0)<\infty$ and $\kappa_{2}(\tau_0)<\infty$. Then, for $0\leq z \leq \tau_{0}$
\begin{align}
    \left|\frac{\mathbb{P}(W>z)}{1-\Phi(z)}-1\right|\leq 31\left[(1+z^2)(\kappa_{1}(z)+\delta(1+\delta_{3}(z)+\kappa_{2}(z)))+(1+z)\kappa_{2}(z)\right]
\end{align}
where 
\begin{align}
    \delta_{3}(z) = [z(1+z\delta+2\delta_{1}(z))+(2+z\delta)\delta_{2}(z)](1+3e\delta)e^{\frac{z^2}{2}(z\delta+2\delta_{1}(z))+3z\delta_{2}(z)}.
\end{align}
\end{prop}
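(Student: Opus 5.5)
We prove Proposition \ref{prop-general-moderate-deviation} with Stein's method in the style of \cite{chen2013stein} and \cite{zhang2023cramer}, using the exchangeable-pair identity (\ref{equ_exchage_pari_start}) already derived in the proof of Proposition \ref{prop-moment-generating-function}. Fix $0\le z\le\tau_0$. Let $f=f_z$ be the bounded solution of the Stein equation
\[
f'(w)-wf(w)=\mathbf{1}\{w\le z\}-\Phi(z).
\]
Substituting $f$ into (\ref{equ_exchage_pari_start}) gives
\begin{align*}
\mathbb{P}(W\le z)-\Phi(z)=\;&\mathbb{E}\Big\{f'(W)\Big(1-\tfrac{1}{2\lambda}\mathbb{E}\{D\Delta\mid W\}\Big)\Big\}\\
&+\tfrac{1}{2\lambda}\mathbb{E}\Big\{D\int_{-\Delta}^{0}\big(f'(W)-f'(W+u)\big)\,du\Big\}+\mathbb{E}\{Rf(W)\}.
\end{align*}
Call these three terms $I_1$, $I_2$, $I_3$. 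The target is to bound $|I_1|+|I_2|+|I_3|$ by $(1-\Phi(z))$ times the bracket in the statement.

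We use the standard bounds on $f_z$ for $w\le z$ and $w>z$. In the tail region $w>z$, these involve $e^{w^2/2}(1-\Phi(w))$. As in \cite{chen2013stein}, we split each expectation according to $W\le 0$, $0<W\le z$ and $W>z$. On $\{0<W\le z\}$ we use $|f'(W)|\le C(1+z)(1-\Phi(z))e^{zW-z^2/2}$ and $|f(W)|\le C(1-\Phi(z))e^{zW-z^2/2}$, which turn everything into exponential moments $\mathbb{E}\{|\cdot|e^{zW}\}$.

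For $I_1$ and $I_3$, conditions (B1) and (B2) with $t=z$ then give contributions $C(1+z^2)(1-\Phi(z))\kappa_1(z)$ and $C(1+z)(1-\Phi(z))\kappa_2(z)$. The factor $e^{z^2/2}$ from (B1) and (B2) cancels against $e^{-z^2/2}$. The regions $W\le0$ and $W>z$ are handled by the bounded-$f$ estimates and by Chebyshev at level $t=z$.

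The main obstacle is $I_2$, because $f'$ is discontinuous at $z$. Write $f'(w)=wf(w)+\mathbf{1}\{w\le z\}-\Phi(z)$. Then $f'(W)-f'(W+u)$ splits into a smooth part, $(wf(w))'$ times $u$, and an indicator part $\mathbf{1}\{W\le z\}-\mathbf{1}\{W+u\le z\}$.

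For the smooth part, $|u|\le|\Delta|\le\delta$ and $|D|\le\delta$. With the symmetric majorant $D^*=\delta$ and Lemma 4.2 of Zhang (2023), we bound it by $\delta\,\mathbb{E}\{|(wf)'(W^*)|\}$-type quantities. This needs $\mathbb{E}\{|W|e^{zW}\}$, which we control as in (\ref{inequ:|W|etW}) by $h'(z)+2e/z$. Using (\ref{inequ:h-prime}) and Proposition \ref{prop-moment-generating-function}, $h'(z)\le g(z)h(z)+2e\delta$, and $g(z)h(z)e^{-z^2/2}$ is exactly $\delta_3(z)$. This produces the term $(1+z^2)\delta(1+\delta_3(z))$. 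The $\delta\kappa_2$ term comes from the extra $R$ contribution in $h'$.

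For the indicator part, the bounded-pair trick avoids the smoothing/concentration condition (A3) of \cite{zhang2023cramer}. Since $|D|\le\delta$, it is bounded by
\[
\frac{\delta}{2\lambda}\,\mathbb{E}\{|\Delta|\,\mathbf{1}\{z-|\Delta|<W\le z+|\Delta|\}\}.
\]
By exchangeability and the identity $\mathbb{E}\{D(g(W)-g(W'))\}=2\mathbb{E}\{Dg(W)\}$, applied with $g$ a piecewise-linear ramp of width $\delta$ near $z$, this equals $\delta$ times $\mathbb{E}\{\tfrac{1}{2\lambda}D\Delta\,\cdot\,\text{ramp}'\}$, up to $\lambda(W+R)$ terms. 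That reduces it to a concentration inequality
\[
\mathbb{P}(z-\delta\le W\le z+\delta)\le C\delta(1+z)e^{-z^2/2}(1+\kappa_1(z)+\kappa_2(z)),
\]
which follows from the same identity with $f$ the ramp times $e^{zw}$, together with (B1) and (B2). Multiplied by $e^{zW}$-weights, this gives an $O((1+z^2)\delta)$ relative contribution.

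Finally we collect constants into $31$ and divide by $1-\Phi(z)\ge ce^{-z^2/2}/(1+z)$. The lower bound $\mathbb{P}(W>z)$ follows by the symmetric argument, which proves the proposition. The proposition then feeds into Theorem \ref{thm-general-MD} with $\kappa_j(t)=\delta_j(t)h(t)e^{-t^2/2}$, controlled by Proposition \ref{prop-moment-generating-function}.
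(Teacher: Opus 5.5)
Your decomposition into $I_1,I_2,I_3$ and your smooth-part estimate follow the paper. That estimate uses Lemma 4.2 of \cite{zhang2023cramer} with $D^*=\delta$, then $\mathbb{E}\{|W|e^{tW}\}\le h'(t)+2e/t$ together with (\ref{inequ:h-prime}) to produce $\delta_3$. However, two steps fail as written.

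\textbf{The bulk region.} On $\{0<W\le z\}$ the pointwise bounds $|f_z'(W)|\le C(1+z)(1-\Phi(z))e^{zW-z^2/2}$ and $|f_z(W)|\le C(1-\Phi(z))e^{zW-z^2/2}$ are false. There $f_z(w)=\sqrt{2\pi}\,e^{w^2/2}\Phi(w)(1-\Phi(z))$ and $w^2/2=zw-z^2/2+(z-w)^2/2$, so near $w=0$ you are off by a factor of order $e^{z^2/2}$. For instance, $f_z'(w)\to 1-\Phi(z)$ as $w\downarrow 0$. A single exponential moment at $t=z$ cannot control the bulk of the distribution. The paper, following Lemma 4.3 of \cite{zhang2023cramer}, instead cuts $[0,z]$ into unit intervals $[j-1,j)$. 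On each piece it uses $we^{w^2/2}\le j\,e^{1/2}e^{jw-j^2/2}$ and applies (B1)--(B2) at $t=j$; see (\ref{inequ:J21-2nd-term-1}). This is why the hypotheses are imposed on all of $[0,\tau_0]$ with nondecreasing $\kappa_i$. The sum over $j$ is also what yields $(1+z^2)\kappa_1(z)$ for $I_1$; your pointwise bound would give $(1+z)$, a sign that it is too strong.

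\textbf{The indicator part of $I_2$ (the more serious gap).} After using $|D|\le\delta$ you must control $\frac{\delta}{2\lambda}\mathbb{E}\{|\Delta|\mathbf{1}\{|W-z|\le|\Delta|\}\}$, which contains the unsigned $|\Delta|$. The identity $\mathbb{E}\{D(g(W)-g(W'))\}=2\lambda\mathbb{E}\{(W+R)g(W)\}$ with a ramp $g$ produces only the signed product $D\Delta$, weighted by an average of $g'$, and that signed product is what (B1) controls. Since no inequality of the form $\delta|\Delta|\le C\,D\Delta$ is available, your ``this equals'' step is unjustified.
\begin{itemize}
\item For the same reason, your concentration inequality does not follow from (B1)--(B2). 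Its lower-bound step needs $D\int_{-\Delta}^0 f'(W+u)\,du\ge 0$ for nonnegative $f'$, i.e.\ $D\Delta\ge 0$.
\item That sign condition is automatic when $D=\Delta$, but fails for a general antisymmetric $D$, such as the one built for DIPS.
\end{itemize}

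\textbf{How the paper handles the indicator.} It needs no concentration inequality. Since $\mathbf{1}\{w>z\}$ is nondecreasing just like $wf_z(w)$, the same application of Lemma 4.2 of \cite{zhang2023cramer} handles it. That lemma rests on the antisymmetry of $D^*\int_{W'}^{W}g(s)\,ds$ for a symmetric $D^*\ge|D|$. It converts the window term into the tail term $\frac{1}{2\lambda}\mathbb{E}\{|\mathbb{E}(D^*\Delta\mid W)|\,\mathbf{1}\{W>z\}\}$, which is then finished by Chebyshev with weight $e^{zW}$ as in (\ref{inequ:J21-3rd-term}).

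\textbf{A caveat about the paper's own step.} The paper passes from $\frac{1}{2\lambda}\mathbb{E}\{\delta\Delta\mid W\}$ to $\frac{\delta}{2}(W+R)$. Your smooth-part bound also relies on this implicitly, through (\ref{inequ:h-prime}) and the appearance of $\mathbb{E}\{|W|e^{zW}\}$. The passage requires $\mathbb{E}\{\Delta\mid W\}=\lambda(W+R)$, whereas (D1) asserts this only for $D$. It is valid when $D=\Delta$. For general $D$, some additional control of $\frac{1}{2\lambda}\mathbb{E}\{D^*\Delta\mid W\}$ appears to be needed in either route; this is the role of the extra hypothesis in \cite{zhang2023cramer}.
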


\begin{proof}[Proof of Proposition \ref{prop-general-moderate-deviation}]
Let $z\geq 0$ be a fixed real number, and let $f_{z}$ be the solution to the Stein equation
\begin{align}\label{stein-equation}
    f^{\prime}(w) - wf(w) = {\bf1}_{\{w\leq z\}} - \Phi(z),
\end{align} 
where $\Phi(\cdot)$ is the standard normal distribution function. It is well known that $f_{z}$ is given by
\begin{align}\label{solution-stein-equation}
    f_{z}(w) = \begin{cases}
        \frac{\Phi(w)\{1-\Phi(z)\}}{p(w)}, & w\leq z,\\
        \frac{\Phi(z)\{1-\Phi(w)\}}{p(w)}, & w>z.
    \end{cases}
\end{align}
where $p(w) = (2\pi)^{-1/2}e^{-w^2/2}$ is the stadard normal density function.

By (\ref{stein-equation}) and (\ref{equ_exchage_pari_start}) and taking $f = f_{z}$, we have
\begin{align}\label{total-decompose-J1-J2-J3}
    \mathbb{P}(W>z) - \{1-\Phi(z)\} = \mathbb{E}\{f^{\prime}_{z}(W)-Wf_{z}(W)\} = J_{1} + J_{2} + J_{3},
\end{align}
where
\begin{align}
    J_{1} &= \mathbb{E}\left\{f^{\prime}_{z}(W)\left(1-\frac{1}{2\lambda}\mathbb{E}(D\Delta|W)\right)\right\},\notag\\
    J_{2} &= \frac{1}{2\lambda}\mathbb{E}\left\{D\int_{-\Delta}^{0}(f^{\prime}_{z}(W+u) - f^{\prime}_{z}(W))\,du\right\},\notag\\
    J_{3} &= \mathbb{E}\{Rf_{z}(W)\}.\notag
\end{align}
Without loss of generality, we only consider $J_2$,because $J_{1}$ and $J_{3}$ can be bounded in a similar way. 

For $J_{2}$, observe that $f^{\prime}_{z}(w) = wf(w)-{\bf 1}_{\{w> z\}}+\{1-\Phi(z)\}$, and both $wf_{z}(w)$ and ${\bf 1}_{\{w> z\}}$ are increasing functions (see, \cite{chen2010normal}, Lemma 2.3), by lemma 4.2 in \cite{zhang2023cramer} and (\ref{equ-D-star}), we have
\begin{align}\label{inequ:J2-decompose}
    |J_{2}|\leq & \frac{1}{2\lambda}\left|\mathbb{E}\left[D\int_{-\Delta}^{0}\{(W+u)f_{z}(W+u) -Wf_{z}(W)\}\right]\right|\notag\\
    & + \frac{1}{2\lambda}\left|\mathbb{E}\left[D\int_{-\Delta}^{0}\{{\bf 1}_{\{W+u>z\}} - {\bf 1}_{\{W>z\}}\}\right]\right|\notag\\
    \leq & \frac{1}{2\lambda}\mathbb{E}\{|\mathbb{E}\{D^{*}\Delta\mid W\}|(|Wf_{z}(W)|+{\bf 1}_{\{W>z\}})\}\notag\\
    = & \frac{\delta}{2}\mathbb{E}\left\{\left|W+R\right|(|Wf_{z}(W)|+{\bf 1}_{\{W>z\}})\right\} = J_{21} +J_{22}
\end{align}
where 
\begin{align}
    J_{21} = \frac{\delta}{2}\mathbb{E}\left\{\left|W+R\right|\cdot |Wf_{z}(W)|\right\}, \quad J_{22} = \frac{\delta}{2}\mathbb{E}\left\{|W+R|{\bf 1}_{\{W>z\}}\right\}. \notag
\end{align}
For any $w>0$, it is well known that $(1-\Phi(w))/p(w)\leq \min\{1/w,\sqrt{2\pi}/2\}$. Then for $w>z$,
\begin{align}\label{inequ:bound-fz-w>z}
    |f_{z}(w)|\leq \frac{\sqrt{2\pi}}{2}\Phi(z), \quad |wf_{z}(w)|\leq \Phi(z)
\end{align}
and by symmetry, for $w<0$,
\begin{align}\label{inequ:bound-fz-w<0}
    |f_{z}(w)|\leq \frac{\sqrt{2\pi}}{2}\{1-\Phi(z)\}, |wf_{z}(w)|\leq 1-\Phi(z)
\end{align}
For $J_{21}$, by (\ref{solution-stein-equation}), (\ref{inequ:bound-fz-w>z}) and (\ref{inequ:bound-fz-w<0}), we have
\begin{align}\label{inequ:J21-decompose}
    J_{21} \leq & \frac{\delta}{2}\{1-\Phi(z)\}\mathbb{E}\left\{\left|W+R\right|{\bf 1}_{\{W<0\}}\right\}\notag\\
    &+\frac{\sqrt{2\pi}\delta}{2}\{1-\Phi(z)\}\mathbb{E}\left\{\left|W+R\right|\cdot We^{W^2/2}{\bf 1}_{\{0\leq W\leq z\}}\right\}\notag\\
    &+\frac{\delta}{2}\mathbb{E}\left\{\left|W+R\right|{\bf 1}_{\{W>z\}}\right\}.
\end{align}
Thus, by (\ref{inequ:J2-decompose}) and (\ref{inequ:J21-decompose}),
\begin{align}\label{inequ:J2-decompose-3-parts}
    |J_{2}| \leq & \frac{\delta}{2}\{1-\Phi(z)\}\mathbb{E}\left\{\left|W+R\right|{\bf 1}_{\{W<0\}}\right\}\notag\\
    &+\frac{\sqrt{2\pi}\delta}{2}\{1-\Phi(z)\}\mathbb{E}\left\{\left|W+R\right|\cdot We^{W^2/2}{\bf 1}_{\{0\leq W\leq z\}}\right\}\notag\\
    &+\delta\mathbb{E}\left\{\left|W+R\right|{\bf 1}_{\{W>z\}}\right\}.
\end{align}
For the first term of (\ref{inequ:J21-decompose}), without loss of generality, we assume that $\mathbb{E}\{W^2\} \leq 2$ and we have
\begin{align}\label{inequ:J21-1st-term}
    \frac{\delta}{2}\{1-\Phi(z)\}\mathbb{E}\left\{\left|W+R\right|{\bf 1}_{\{W<0\}}\right\} \leq \delta\{1-\Phi(z)\}(1+\kappa_{2}(z)).  
\end{align}
For the second term of (\ref{inequ:J21-decompose}), similarly to lemma 4.3 in \cite{zhang2023cramer}, we have
\begin{align}\label{inequ:J21-2nd-term-1} 
    & \mathbb{E}\left\{\left|W+R\right|\cdot We^{W^2/2}{\bf 1}_{\{0\leq W\leq z\}}\right\}\notag\\
    = &  \sum_{j = 1}^{\lfloor z \rfloor}\mathbb{E}\{|W+R|\cdot We^{W^2/2}{\bf 1}_{\{j-1\leq W<j\}} \} + \mathbb{E}\{|W+R|\cdot We^{W^2/2}{\bf 1}_{\{\lfloor z \rfloor \leq W<z\}} \}\notag\\
    = &   \sum_{j = 1}^{\lfloor z \rfloor}\mathbb{E}\{|W+R|\cdot We^{W^2/2-jW}e^{jW}{\bf 1}_{\{j-1\leq W<j\}} \} + \mathbb{E}\{|W+R|\cdot We^{W^2/2-zW}e^{zW}{\bf 1}_{\{\lfloor z \rfloor \leq W<z\}} \}\notag\\
    \leq  &   \sum_{j = 1}^{\lfloor z \rfloor}\mathbb{E}\{|W+R|\cdot \sup_{t\in (j-1,j)}(te^{t^2/2-jt}e^{jW}){\bf 1}_{\{j-1\leq W<j\}} \} + \mathbb{E}\{|W+R|\cdot \sup_{t\in(\lfloor z \rfloor,z)}(te^{t^2/2-zt}e^{zW}){\bf 1}_{\{\lfloor z \rfloor \leq W<z\}} \}\notag\\
    = &   \sum_{j = 1}^{\lfloor z \rfloor}je^{\frac{(j-1)^2}{2}-j(j-1)}\mathbb{E}\{|W+R|\cdot e^{jW}{\bf 1}_{\{j-1\leq W<j\}} \} + ze^{\lfloor z \rfloor^2/2-z\lfloor z \rfloor}\mathbb{E}\{|W+R|\cdot e^{zW}{\bf 1}_{\{\lfloor z \rfloor \leq W<z\}} \}\notag\\
    \leq &  2\sum_{j = 1}^{\lfloor z \rfloor}je^{-j^2/2}\mathbb{E}\{|W|\cdot e^{jW}{\bf 1}_{\{j-1\leq W<j\}} \} + 2ze^{-z^2/2}\mathbb{E}\{|W|\cdot e^{zW}{\bf 1}_{\{\lfloor z \rfloor \leq W<z\}} \}\notag\\
    & +  2\sum_{j = 1}^{\lfloor z \rfloor}je^{-j^2/2}\mathbb{E}\{|R|\cdot e^{jW}{\bf 1}_{\{j-1\leq W<j\}} \} + 2ze^{-z^2/2}\mathbb{E}\{|R|\cdot e^{zW}{\bf 1}_{\{\lfloor z \rfloor \leq W<z\}} \}\notag\\
    \leq &  2\sum_{j = 1}^{\lfloor z \rfloor}je^{-j^2/2}(\mathbb{E}\{We^{jW}\}+\frac{2e}{j}) + 2ze^{-z^2/2}(\mathbb{E}\{We^{zW}\}+\frac{2e}{z}) + 2\kappa_{2}(z)\left(\sum_{j = 1}^{\lfloor z \rfloor}j + z\right)\notag\\
    \leq & 2\sum_{j = 1}^{\lfloor z\rfloor}je^{-\frac{j^2}{2}}(\mathbb{E}\{We^{jW}\}+\frac{2e}{j})+2ze^{-\frac{z^2}{2}}(\mathbb{E}\{We^{zW}\}+\frac{2e}{z})+4(1+z^2)\kappa_{2}(z).
\end{align}
Then by (\ref{equ-moment-generating-function}), (\ref{inequ:h-prime}) and (\ref{inequ:J21-2nd-term-1}), we have
\begin{align}\label{inequ:J21-2nd-term-2}
    &\frac{\sqrt{2\pi}\delta}{2}\{1-\Phi(z)\}\mathbb{E}\left\{\left|W+R\right|\cdot We^{W^2/2}{\bf 1}_{\{0\leq W\leq z\}}\right\}\notag\\
    \leq & 2\sqrt{2\pi}\delta\{1-\Phi(z)\}(1+z^2)[z(1+z\delta+2\delta_{1}(z))+(2+z\delta)\delta_{2}(z)](1+3e\delta)e^{\frac{z^2}{2}(z\delta+2\delta_{1}(z))+3z\delta_{2}(z)} \notag\\
    &+ 2\sqrt{2\pi}\delta\{1-\Phi(z)\}(1+z^2)\kappa_{2}(z)\notag\\
    = & 2\sqrt{2\pi}\delta\{1-\Phi(z)\}(1+z^2)(\kappa_{2}(z) + \delta_{3}(z))
\end{align}
For the last term of (\ref{inequ:J21-decompose}), by Markov's inequality, we have for $z>1$,
\begin{align}
    \delta\mathbb{E}\left\{\left|W+R\right|{\bf 1}_{\{W>z\}}\right\} \leq & \delta\mathbb{E}\{|W|e^{zW}\}e^{-z^2}+\delta\mathbb{E}\{|R|e^{zW}\}e^{-z^2}\notag\\
    \leq & \delta(\mathbb{E}\{We^{zW}\}+\frac{2e}{z})e^{-z^2} + \delta\kappa_{2}(z)e^{\frac{z^2}{2}}e^{-z^2} \notag\\
    \leq & 2\delta(1+\delta+\delta_{3}(z)+\kappa_{2}(z))e^{-\frac{z^2}{2}}.\notag
\end{align}
Since it is well known that for $z>0$,
\begin{align}
    e^{-z^2/2}\leq \sqrt{2\pi}(1+z)\{1-\Phi(z)\}\leq \frac{3\sqrt{2\pi}}{2}(1+z^2)\{1-\Phi(z)\},\notag
\end{align}
we deduce that 
\begin{align}\label{inequ:J21-3rd-term}
    \delta\mathbb{E}\left\{\left|W+R\right|{\bf 1}_{\{W>z\}}\right\}\leq 9\sqrt{2\pi}\delta\{1-\Phi(z)\}(1+z^2)(1+\delta+\delta_{3}(z)+\kappa_{2}(z)).
\end{align}
Therefore, combining (\ref{inequ:J21-1st-term}), (\ref{inequ:J21-2nd-term-2}) and (\ref{inequ:J21-3rd-term}), for $z>1$, we have
\begin{align}\label{inequ:J2-final-bound}
    |J_{2}|\leq 12\sqrt{2\pi}\delta\{1-\Phi(z)\}(1+z^2)(1+\delta_{3}(z)+\kappa_{2}(z)).
\end{align}
Similarly to that in (\ref{inequ:J21-decompose}), by dividing $(-\infty,\infty)$ into three parts, and analyze each part carefully, it also follows that
\begin{align}
    |J_1|\leq 20\{1-\Phi(z)\}(1+z^2)\kappa_{1}(z), \quad |J_{3}|\leq 20\{1-\Phi(z)\}(1+z)\kappa_{2}(z).\notag
\end{align}
This completes the proof for $z>1$ together with (\ref{total-decompose-J1-J2-J3}). As for $0\leq z\leq 1$, in this case $1-\Phi(z)$ has a lower bound, so we can directly use the Berry-Esseen bound result in \cite{zhang2022berry} to complete the proof.
\end{proof}

\begin{proof}[Proof of Theorem \ref{thm-general-MD}]
By Proposition \ref{prop-moment-generating-function}, we have $\mathbb{E}\{e^{tW}\}\leq (1+9\delta)e^{\theta}e^{t^2/2}$, for $0\leq t\leq \tau_{0}(\theta)$. By conditions $(A_{1})$-$(A_{3})$, we have conditions $(B_{1})$ and $(B_{2})$ are satisfied with $\tau_{0} = \tau_{0}(\theta)$, and
\begin{align}
    \kappa_{1}(t) = (1+9\delta)\delta_1(t)e^{\theta}, \quad \kappa_{2}(t) = (1+9\delta)\delta_2(t)e^{\theta}.\notag
\end{align}
This proves Theorem \ref{thm-general-MD} by Proposition \ref{prop-general-moderate-deviation}.
\end{proof}

\section{Proof of other results}\label{proof-of-other-results}

\subsection{Proof of Theorem \ref{theo-chatterjee-rank-correlation-coefficient}}


\begin{proof}[Proof of Theorem \ref{theo-chatterjee-rank-correlation-coefficient}]
By \cite{chao1996estimating}, we know that the Chatterjee's rank correlation coefficient $\xi_n = (\Gamma_n - \mathbb{E}\Gamma_n)/(\text{Var} \Gamma_n)^{1/2} = (T_n-\mathbb{E}T_n)/(\text{Var} T_n)^{1/2}$, where $\Gamma_n$ is the oscillation of a permutation which is a core part of Chatterjee's rank correlation coefficient and $T_n = \sum_{i = 1}^{n}a_{\pi(i)\pi(i+1)}$, where 
\begin{align}
    a_{ij} = &(\alpha_{ij}-\alpha_{i\cdot}-\alpha_{\cdot j}+\alpha_{\cdot\cdot})/B(n),\notag\\
    B^2(n) = &\sum_{i,j}(\alpha_{ij}-\alpha_{i\cdot}-\alpha_{\cdot j}+\alpha_{\cdot\cdot})^2/(n-1) = \frac{(n+1)(2n^2+7)}{45},\notag
\end{align} 
and 
\begin{align}
    \alpha_{i\cdot} = \sum_{j}\alpha_{ij}/n,\quad \alpha_{\cdot j} = \sum_{i}\alpha_{ij}/n, \quad \alpha_{\cdot\cdot} = \sum_{i,j}\alpha_{ij}/n^2.\notag
\end{align}
By a direct calculation, we obtain
\begin{align}
    \mathbb{E}T_n =& -\frac{1}{n-1}\sum_{i}a_{ii},\notag\\
    \text{Var}T_n =& \frac{1}{n-2}\sum_{i,j}a_{ij}^{2}-\frac{1}{(n-1)(n-2)}\sum_{i,j}a_{ij}a_{ji}\notag\\
    &+\frac{1}{(n-1)^2(n-2)}\left(\sum_{i}a_{ii}\right)^2-\frac{n}{(n-1)(n-2)}\sum_{i}a_{ii}^2\notag\\
    =& 1+O(n^{-1}).\notag
\end{align}
So the normalized statistic is defined as 
\begin{align}
    W_n = T_n-\mathbb{E}T_n =& \sum_{i}a_{\pi(i)\pi(i+1)}+\sum_{i}a_{ii}/(n-1)\notag\\
    =&\sum_{i,j}\left({\bf 1}_{\{j = i+1\}}a_{\pi(i)\pi(i+1)}+\frac{a_{ii}}{n(n+1)}\right)\notag\\
    :=&\sum_{i,j}\xi(i,j,\pi(i),\pi(j)).\notag
\end{align}
By (\ref{decomposition-of-general-DIPs-1})-(\ref{decomposition-of-general-DIPs-3}) we have
\begin{align}
    \xi(i,j,k,\cdot) = \frac{a_{ii}}{n(n-1)} & \quad \xi(i,j,\cdot,l) = \frac{a_{ii}}{n(n-1)}\notag\\
    \xi(i,\cdot,k,l) = \frac{a_{kl}}{n}+\frac{a_{ii}}{n(n-1)}& \quad \xi(\cdot,j,k,l) = \frac{a_{kl}}{n}-\frac{\alpha_{\cdot\cdot}}{n(n-1)B(n)} \notag\\
    \xi(i,j,\cdot,\cdot) = \frac{a_{ii}}{n(n-1)} & \quad \xi(i,\cdot,k,\cdot) = \frac{a_{ii}}{n(n-1)}\notag\\
    \xi(i,\cdot,\cdot,l) = \frac{a_{ii}}{n(n-1)}& \quad \xi(\cdot,j,k,\cdot) = \frac{-\alpha_{\cdot\cdot}}{n(n-1)B(n)}\notag\\
    \xi(\cdot,j,\cdot,l) = \frac{-\alpha_{\cdot\cdot}}{n(n-1)B(n)}& \quad \xi(\cdot,\cdot,k,l) = \frac{a_{kl}}{n}-\frac{\alpha_{\cdot\cdot}}{n(n-1)B(n)}\notag\\
    \xi(i,\cdot,\cdot,\cdot) = \frac{a_{ii}}{n(n-1)}& \quad \xi(\cdot,j,\cdot,\cdot) = \frac{-\alpha_{\cdot\cdot}}{n(n-1)B(n)}\notag\\
    \xi(\cdot,\cdot,k,\cdot) = \frac{-\alpha_{\cdot\cdot}}{n(n-1)B(n)}& \quad \xi(\cdot,\cdot,\cdot,l) = \frac{-\alpha_{\cdot\cdot}}{n(n-1)B(n)}\notag\\
    \xi(\cdot,\cdot,\cdot,\cdot) = \frac{-\alpha_{\cdot\cdot}}{n(n-1)B(n)}& \quad \xi^{*}(i,j,k,l) = {\bf 1}_{\{j = i+1\}}a_{kl}- \frac{a_{kl}}{n}\notag\\
    \eta(i,k) = \frac{a_{ii}}{n-1} - \frac{a_{kk}}{n}& \quad \eta(i,\cdot) = \frac{a_{ii}}{n-1}+\frac{\alpha_{\cdot\cdot}}{nB(n)}\notag\\
    \eta(\cdot,k) = \frac{-\alpha_{\cdot\cdot}}{(n-1)B(n)}-\frac{a_{kk}}{n}& \quad \eta(\cdot,\cdot) = \frac{-\alpha_{\cdot\cdot}}{n(n-1)B(n)}\notag\\
    \eta^{*}(i,k) = 0& \quad \alpha_{\cdot\cdot} = \frac{n^2-1}{3n}\notag
\end{align}
and
\begin{align}
    W_n =& \sum_{i,j}^{\prime}\xi^{*}(i,j,\pi(i),\pi(j))-n\eta(\cdot,\cdot)\notag\\
    =&\sum_{i,j}^{\prime}\left({\bf 1}_{\{j = i+1\}}a_{\pi(i)\pi(j)}-\frac{1}{n}a_{\pi(i)\pi(j)}\right)+\sqrt{\frac{5(n+1)}{n^2(2n^2+7)}}.
\end{align}
So, if we define 
\begin{align}
    Y_n = \sum_{i,j}^{\prime}\left({\bf 1}_{\{j = i+1\}}a_{\pi(i)\pi(j)}-\frac{1}{n}a_{\pi(i)\pi(j)}\right):=\sum_{i,j}^{\prime}b(i,j,\pi(i),\pi(j)).
\end{align}
we have for $z\geq 0$,
\begin{align}
    &\left|\mathbb{P}(\xi_{n} > z) - (1-\Phi(z))\right|\notag\\
    \leq & \left|\mathbb{P}\left(Y_n > z(1+C_1/n) - \sqrt{\frac{5(n+1)}{n^2(2n^2+7)}}\right) - \mathbb{P}\left(Z > z(1+C_1/n) - \sqrt{\frac{5(n+1)}{n^2(2n^2+7)}}\right)\right|\notag\\
    &+\left|\mathbb{P}\left(Z > z(1+C_1/n) - \sqrt{\frac{5(n+1)}{n^2(2n^2+7)}}\right) - \mathbb{P}\left(Z > z\right)\right|\notag\\
    :=& J_1 + J_2.\notag
\end{align}
where $Z$ is a standard normal random variable, since $\max_{i,k}|a_{ik}|  = a_{1n} = a_{n1} = \frac{\sqrt{5}(n^2-1)}{n\sqrt{(n+1)(2n^2+7)}}\leq \sqrt{\frac{5}{n}}$, and then we can easily verify that $\{b(i,j,k,l)\}_{i,j,k,l\in[n]}$ satisfy condition (\ref{equ-b-condition-origin}) and the boundedness condition (\ref{equ-boundedness-condition-DIPS}), where $\delta = \frac{C}{\sqrt{n}}$. By Theorem \ref{thm-general-double-index-permutation-statistics-a-b-form}, we have for $0\leq z \leq n^{1/6}$,
\begin{align}
    J_1 = O(1)(1-\Phi(z))\frac{(1+z^3)}{\sqrt{n}}.
\end{align}
since for $z>1$, we have $\phi(z) \leq 2z(1-\Phi(z))$ and for $0\leq z\leq 1$,we have $\phi(z) \leq 2(1-\Phi(z))$, by the mean value theorem, we have for $0\leq z \leq n^{1/6}$,
\begin{align}
    J_2 = O(1)(1-\Phi(z))\frac{(1+z^3)}{\sqrt{n}}.
\end{align}
then we combine $J_1$ and $J_2$ to complete the proof.
\end{proof}


\subsection{Proof of Theorem \ref{theo-descents-inversions}}
\begin{proof}[Proof of Theorem \ref{theo-descents-inversions}]
Since
\begin{align}
    D = & \sum_{i,j}({\bf 1}\{i<j,\pi(i)-1 = \pi(j)\} - {\bf 1}\{i<j, \pi(i)+1 = \pi(j)\}) = 2 \text{Des}(\pi^{-1}) - (n-1)\notag\\
    := &\sum_{i,j}\xi(i,j,\pi(i),\pi(j)),\notag
\end{align}
where $\xi(i,j,k,l) =  {\bf 1}\{i<j,k-1 = l\} - {\bf 1}\{i<j, k + 1 = l\}$, by (\ref{decomposition-of-general-DIPs-1})-(\ref{decomposition-of-general-DIPs-3}) we have
\begin{align}
    \xi(i,j,k,\cdot) = &\frac{1}{n}\left({\bf 1}\{i<j,k=n\} - {\bf 1}\{i<j,k=1\}\right), \quad \xi(i,j,\cdot,l) = \frac{1}{n}\left({\bf 1}\{i<j,l=1\} - {\bf 1}\{i<j,l=n\}\right)\notag\\
    \xi(i,\cdot,k,l) = &\frac{n-i}{n}\left({\bf 1}\{k-1=l\} - {\bf 1}\{k+1=l\}\right),\quad \xi(\cdot,j,k,l) = \frac{j-1}{n}\left({\bf 1}\{k-1=l\} - {\bf 1}\{k+1=l\}\right)\notag\\
    \xi(i,j,\cdot,\cdot) = & 0,\quad \xi(i,\cdot,k,\cdot) = \frac{n-i}{n^2}\left({\bf 1}\{k = n\} - {\bf 1}\{k = 1\}\right)\notag\\
    \xi(i,\cdot,\cdot,l) = &\frac{n-i}{n^2}\left({\bf 1}\{l = 1\} - {\bf 1}\{l = n\}\right),\quad \xi(\cdot,j,k,\cdot) = \frac{j-1}{n^2}\left({\bf 1}\{k = n\} - {\bf 1}\{k = 1\}\right)\notag\\
    \xi(\cdot,j,\cdot,l) = & \frac{j-1}{n^2}\left({\bf 1}\{l  =1\} - {\bf 1}\{l = n\}\right),\quad \xi(\cdot,\cdot,k,l) =  \frac{n-1}{2n}\left({\bf 1}\{k-1=l\} - {\bf 1}\{k+1=l\}\right)\notag
\end{align}
and
\begin{align}
    \xi(i,\cdot,\cdot,\cdot) = & 0, \quad \xi(\cdot,j,\cdot,\cdot) = 0, \quad \xi(\cdot,\cdot,\cdot,\cdot) = 0\notag\\
    \xi(\cdot,\cdot,k,\cdot) = & \frac{n-1}{2n^2}\left({\bf 1}\{k  =n\} - {\bf 1}\{k =1\}\right),\quad \xi(\cdot,\cdot,\cdot,l) =  \frac{n-1}{2n^2}\left({\bf 1}\{l = 1\} - {\bf 1}\{l = n\}\right)\notag
\end{align}
then we have 
\begin{align}
    \xi^{*}(i,j,k,l) = & {\bf 1}\{i<j,k-1 = l\} - {\bf 1}\{i<j, k + 1 = l\} - \frac{1}{n}\left({\bf 1}\{i<j,k=n\} - {\bf 1}\{i<j,k=1\}\right)\notag\\
    & - \frac{1}{n}\left({\bf 1}\{i<j,l=1\} - {\bf 1}\{i<j,l=n\}\right)-\frac{n-1+2(j-i)}{2n}{\bf 1}\{k-1 =l\}\notag\\
    &+\frac{n-1+2(j-i)}{2n}{\bf 1}\{k+1=l\} + \frac{n-1+2(j-i)}{2n^2}{\bf 1}\{k = n\}-\frac{n-1+2(j-i)}{2n^2}{\bf 1}\{k = 1\}\notag\\
    &+\frac{n-1+2(j-i)}{2n^2}{\bf 1}\{l  =1\} - \frac{n-1+2(j-i)}{2n^2}{\bf 1}\{l = n\}.\notag
\end{align}
and
\begin{align}
    \eta(i,k)  = &\eta^{*}(i,k)=  \frac{n-2i+1}{n}{\bf 1}\{k = n\} - \frac{n-2i+1}{n}{\bf 1}\{k = 1\},\notag\\
    \eta(\cdot,k) = & \eta(i,\cdot) = \eta(\cdot,\cdot) = 0, \quad \sigma^2 = 2(n+1)/3\notag,
\end{align}
so 
\begin{align}
    W = &\frac{D - n\eta(\cdot,\cdot)}{\sigma} = \frac{\text{Des}-(n-1)/2}{\sqrt{(n+1)/6}}\notag\\
    = & \sum_{i}\sqrt{\frac{6}{n+1}}\eta^{*}(i,\pi(i))+\sum_{i,j}^{\prime}\sqrt{\frac{6}{n+1}}\xi^{*}(i,j,\pi(i),\pi(j))\notag\\
    :=&\sum_{i}a(i,\pi(i))+\sum_{i,j}^{\prime}b(i,j,\pi(i),\pi(j)).
\end{align}
and we can easily verify that $\{a(i,k)\}_{i,k\in[n]}$ and $\{b(i,j,k,l)\}_{i,j,k,l\in[n]}$ satisfy condition (\ref{equ-a-condition-origin}), (\ref{equ-b-condition-origin}) and the boundedness condition (\ref{equ-boundedness-condition-DIPS}) where $\delta = \frac{C}{\sqrt{n}}$, therefore we apply Theorem \ref{thm-general-double-index-permutation-statistics-a-b-form} to prove (\ref{descent-moderate-deviation}), by a same argument we prove (\ref{inversion-moderate-deviation}).
\end{proof}

\subsection{Proof of Theorem \ref{theo-mann-whitney-wilcoxon}}
\begin{proof}[Proof of Theorem \ref{theo-mann-whitney-wilcoxon}]
Since the Mann-Whitney-Wilcoxon statistic is defined as
\begin{align}
    D = \sum_{i,j}\xi(i,j,\pi(i),\pi(j)) ,\quad \xi(i,j,\pi(i),\pi(j)) = {\bf 1}\{1\leq i\leq n_{1},n_{1}+1\leq j\leq n,1\leq \pi(i)<\pi(j)\leq n\}\notag,
\end{align}
by (\ref{decomposition-of-general-DIPs-1})-(\ref{decomposition-of-general-DIPs-3}) we have
\begin{align}
    \xi(i,j,k,\cdot) = & \frac{n-k}{n}{\bf 1}\{1\leq i\leq n_1,n_1+1\leq j\leq n\},\quad \xi(i,j,\cdot,l) = \frac{l-1}{n}{\bf 1}\{1\leq i\leq n_{1},n_{1}+1\leq j\leq n\}\notag\\
    \xi(i,\cdot,k,l) = & \frac{n_2}{n}{\bf 1}\{1\leq i\leq n_{1},1\leq k<l\leq n\},\quad \xi(\cdot,j,k,l) = \frac{n_1}{n}{\bf 1}\{n_{1}+1\leq j\leq n,1\leq k<l\leq n\}\notag\\
    \xi(i,j,\cdot,\cdot) = & \frac{n-1}{2n}{\bf 1}\{1\leq i\leq n_{1},n_{1}+1\leq j\leq n\},\quad \xi(i,\cdot,k,\cdot) = \frac{n_{2}(n-k)}{n^2}{\bf 1}\{1\leq i\leq n_{1}\}\notag\\
    \xi(i,\cdot,\cdot,l) = &\frac{n_{2}(l-1)}{n^2}{\bf 1}\{1\leq i\leq n_{1}\},\quad \xi(\cdot,j,k,\cdot) = \frac{n_{1}(n-k)}{n^2}{\bf 1}\{n_{1}+1\leq j\leq n\}\notag\\
    \xi(\cdot,j,\cdot,l) = & \frac{n_{1}(l-1)}{n^2}{\bf 1}\{n_{1}+1\leq j\leq n\},\quad \xi(\cdot,\cdot,k,l) = \frac{n_{1}n_{2}}{n^2}{\bf 1}\{1\leq k<l\leq n\}\notag\\
    \xi(i,\cdot,\cdot,\cdot) = &\frac{n_{2}(n-1)}{2n^2}{\bf 1}\{1\leq i\leq n_{1}\},\quad \xi(\cdot,j,\cdot,\cdot) = \frac{n_{1}(n-1)}{2n^2}{\bf 1}\{n_{1}+1\leq j\leq n\}\notag\\
    \xi(\cdot,\cdot,k,\cdot) = &\frac{n_{1}n_{2}}{n^3}(n-k),\quad \xi(\cdot,\cdot,\cdot,l) = \frac{n_{1}n_{2}}{n^3}(l-1),\quad \xi(\cdot,\cdot,\cdot,\cdot) = \frac{n_{1}n_{2}(n-1)}{2n^3} \notag
\end{align}
and
\begin{align}
    \xi^{*}(i,j,k,l) = & {\bf 1}\{1\leq i\leq n_1,n_{1}+1\leq j\leq n,1\leq k<l\leq n\} - \frac{n-1+2(l-k)}{2n}{\bf 1}\{1\leq i\leq n_{1},n_{1}+1\leq j\leq n\}\notag\\
    &- \frac{n_{2}}{n}{\bf 1}\{1\leq i\leq n_{1},1\leq k<l\leq n\} - \frac{n_{1}}{n}{\bf 1}\{n_{1}+1\leq j\leq n,1\leq k<l\leq n\}\notag\\
    &+\frac{n_{2}(n-1+2(l-k))}{2n^2}{\bf 1}\{1\leq i\leq n_{1}\}+\frac{n_{1}(n-1+2(l-k))}{2n^2}{\bf 1}\{n_{1}+1\leq j\leq n\}\notag\\
    &+ \frac{n_{1}n_{2}}{n^2}{\bf 1}\{1\leq k<l\leq n\}-\frac{n_{1}n_{2}(n-1+2(l-k))}{2n^3}\notag.
\end{align}
then we have
\begin{align}
    \eta(i,k) = &{\bf 1}\{1\leq i\leq n_{1}\}\left(\frac{n_{2}(n-1)}{2n^2} + \frac{n_{2}(n-k)}{n}\right) + {\bf 1}\{n_{1}+1\leq i\leq n\}\left(\frac{n_{1}(n-1)}{2n^2}+\frac{n_{1}(k-1)}{n}\right)\notag\\
    &+\frac{n_1n_2(n-1)(n+1)}{2n^3}\notag\\
    \eta^{*}(i,k) = & {\bf 1}\{1\leq i\leq n_{1}\}\left(\frac{n_2(n-2k+1)}{2n}\right)+{\bf 1}\{n_{1}+1\leq j\leq n\}\left(\frac{n_{1}(2k-n-1)}{2n}\right)\notag\\
    \eta(\cdot,\cdot) = & \frac{n_1n_2(n-1)(n+1)}{2n^3},\quad \sigma^2 = \frac{n_1n_2(n+1)}{12}\notag
\end{align}
so
\begin{align}
    W = &\frac{D - n\eta(\cdot,\cdot)}{\sigma} = \frac{\sum_{i,j}\xi(i,j,\pi(i),\pi(j)) - n_1n_2(n-1)(n+1)/2n^2}{\sqrt{n_1n_2(n+1)/12}}\notag\\
    = & \sum_{i}\sqrt{\frac{12}{n_1n_2(n+1)}}\eta^{*}(i,\pi(i))+\sum_{i,j}^{\prime}\sqrt{\frac{12}{n_1n_2(n+1)}}\xi^{*}(i,j,\pi(i),\pi(j))\notag\\
    :=&\sum_{i}a(i,\pi(i))+\sum_{i,j}^{\prime}b(i,j,\pi(i),\pi(j)).
\end{align}
and we can easily verify that $\{a(i,k)\}_{i,k\in[n]}$ and $\{b(i,j,k,l)\}_{i,j,k,l\in[n]}$ satisfy condition (\ref{equ-a-condition-origin}), (\ref{equ-b-condition-origin}) and the boundedness condition (\ref{equ-boundedness-condition-DIPS}) where $\delta = \frac{C}{\sqrt{n}}$, therefore we apply Theorem \ref{thm-general-double-index-permutation-statistics-a-b-form} to prove (\ref{mann-whitney-wilcoxon-moderate-deviation}).
\end{proof}

\begin{proof}[Proof of Lemma \ref{lem-E(xetW)}]
We First consider (\ref{inequ:final-bound-a-square-form}), by a simple calculation we have
\begin{align}
    &\mathbb{E}\left\{\left(\sum_{i = 1}^{n}a^{2}_{i\pi(i)}-\mathbb{E}\left\{\sum_{i = 1}^{n}a^{2}_{i\pi(i)}\right\}\right)^2e^{tW_n}\right\}\notag\\
    =&\mathbb{E}\left\{\left(\sum_{i\neq j}a^{2}_{i\pi(i)}a^{2}_{j\pi(j)}\right)e^{tW_n}\right\}-\frac{2}{n}\sum_{i,j}a^{2}_{ij}\mathbb{E}\left\{\left(\sum_{k = 1}^{n}a^{2}_{k\pi(k)}\right)e^{tW_n}\right\}\notag\\
    &+\frac{1}{n^2}\left(\sum_{i,j}a^{2}_{ij}\right)^2h(t)+\mathbb{E}\left\{\left(\sum_{i = 1}^{n}a^{4}_{i\pi(i)}\right)e^{tW_n}\right\}\notag\\
    \leq & J_1 -J_2 + \frac{1}{n^2}\left(\sum_{i,j}a^{2}_{ij}\right)^2h(t) + n\delta^4h(t).\label{equ-a-square-form-decompose}
\end{align}
where
\begin{align}
    J_{1} = & \mathbb{E}\left\{\left(\sum_{i\neq j}a^{2}_{i\pi(i)}a^{2}_{j\pi(j)}\right)e^{tW_n}\right\},\notag\\
    J_{2} = & \frac{2}{n}\sum_{i,j}a^{2}_{ij}\mathbb{E}\left\{\left(\sum_{k = 1}^{n}a^{2}_{k\pi(k)}\right)e^{tW_n}\right\},\notag
\end{align}
Considering $J_1$, for any index $i,j\in[n]$ satisfy $i\neq j$, let 
\begin{align}
    &W_{n}^{(i,j)} = \sum_{\substack{p = 1\\ p\notin \{i,j\}}}^{n}a_{p\pi(p)} + \sum_{\substack{p\neq q\\ p,q\notin \{i,j\}}}b_{pq\pi(p)\pi(q)},\quad V_{ij} = W_n - W_{n}^{(i,j)}. \label{def:Wn-ij-and-V}
\end{align}
Applying condition (\ref{equ-boundedness-condition-DIPS}), we get
\begin{align}
    &|V_{ij}| =  \left|a_{i\pi(i)} + a_{j\pi(j)} +\sum_{\substack{p = 1\\ p\neq i}}^{n}(b_{ip\pi(i)\pi(p)}+b_{pi\pi(p)\pi(i)})+\sum_{\substack{p = 1\\ p\notin \{i,j\}}}^{n}(b_{jp\pi(j)\pi(p)}+b_{pj\pi(p)\pi(j)})\right|\leq 12\delta.\label{V-bound}
\end{align}
For any index $i\neq j$, we perform a Taylor expansion $W_{n}^{(i,j)}$. It then follows that
\begin{align}
    J_1 =& \frac{1}{n(n-1)}\sum_{i\neq j}\sum_{k\neq l}a^{2}_{ik}a^{2}_{jl}\mathbb{E}\left\{\Psi_{t}(W_n)\middle| \substack{\pi(i) = k\\ \pi(j) = l}\right\}\notag\\
    = & \frac{1}{n(n-1)}\sum_{i\neq j}\sum_{k\neq l}a^{2}_{ik}a^{2}_{jl}\mathbb{E}\left\{\Psi_{t}(W_{n}^{(i,j)})\middle| \substack{\pi(i) = k\\ \pi(j) = l}\right\}\notag\\
    &+\frac{t}{n(n-1)}\sum_{i\neq j}\sum_{k\neq l}a^{2}_{ik}a^{2}_{jl}\mathbb{E}\left\{V_{ij}\Psi_{t}(W_{n}^{(i,j)})\middle| \substack{\pi(i) = k\\ \pi(j) = l}\right\}\notag\\
    &+\frac{t^2}{n(n-1)}\sum_{i\neq j}\sum_{k\neq l}a^{2}_{ik}a^{2}_{jl}\mathbb{E}\left\{V_{ij}^{2}\Psi_{t}(W_{n}^{(i,j)}+UV_{ij})(1-U)\middle| \substack{\pi(i) = k\\ \pi(j) = l}\right\}\notag\\
    :=& J_{11} + J_{12} + J_{13}, \label{J1-decompose}
\end{align}
where $U$ is a uniform random variable on $[0,1]$ and is independent of $\pi$. Note that, $J_{1}$ is decomposed into three parts $J_{11}, J_{12} \text{ and } J_{13}$. We first consider $J_{11}$, by using the technique of adding and subtracting one item and the condition (\ref{equ-boundedness-condition-DIPS}), we obtain
\begin{align}
    J_{11} \leq & \frac{1}{n(n-1)}\sum_{i\neq j}\sum_{k\neq l}a^{2}_{ik}a^{2}_{jl}h(t) + \frac{1}{n(n-1)}\sum_{i\neq j}\sum_{k\neq l}a^{2}_{ik}a^{2}_{jl}\left|\mathbb{E}\left\{\Psi_{t}(W_{n}^{(i,j)})\middle| \substack{\pi(i) = k\\ \pi(j) = l}\right\}-h(t)\right| \notag\\
    \leq &\frac{1}{n(n-1)}\left(\sum_{i,j}a^{2}_{ij}\right)^2h(t) + \frac{\delta^4}{n(n-1)}\sum_{i\neq j}\sum_{k\neq l}\left|\mathbb{E}\left\{\Psi_{t}(W_{n}^{(i,j)})\middle| \substack{\pi(i) = k\\ \pi(j) = l}\right\}-h(t)\right|,\label{J11-decompose}
\end{align}
Next, to estimate $J_{1}$, we estimate the absolute difference $\left|\mathbb{E}\left\{\Psi_{t}(W_{n}^{(i,j)})\middle| \substack{\pi(i) = k\\ \pi(j) = l}\right\}-h(t)\right|$ for any fixed indexes $i\neq j,k\neq l$. Denote $\tau_{i,j}$ the transposition of $i$ and $j$, then we define $\sigma^{ij}_{kl}$ and $S^{(i,j)}_{\sigma^{ij}_{kl}}$ as follows 
\begin{align}
    \sigma^{ij}_{kl} = \begin{cases}
        \sigma\circ\tau_{i,\sigma^{-1}(k)}\circ\tau_{j,\sigma^{-1}(k)}, & \sigma(i) = l, \sigma(j) \neq  k,\\
        \sigma\circ\tau_{j,\sigma^{-1}(l)}\circ\tau_{i,\sigma^{-1}(l)}, & \sigma(i) \neq l, \sigma(j) = k,\\
        \sigma\circ\tau_{i,\sigma^{-1}(k)}\circ\tau_{j,\sigma^{-1}(l)}\circ \tau_{i,j}, &\sigma(i) = l, \sigma(j) = k,\\
        \sigma\circ\tau_{i,\sigma^{-1}(k)}\circ\tau_{j,\sigma^{-1}(l)}, & otherwise,
    \end{cases}\label{def-sigma}
\end{align}
\begin{align}
    S^{(i,j)}_{\sigma^{ij}_{kl}} =& \sum_{\substack{p = 1\\ p\notin \{i,j\}}}^{n}a_{p\sigma^{ij}_{kl}(p)} + \sum_{\substack{p\neq q\\ p,q\notin \{i,j\}}}b_{pq\sigma^{ij}_{kl}(p)\sigma^{ij}_{kl}(q)},\label{def-Sij}
\end{align}
where $i\neq j,k\neq l \in [n]$, $\sigma$ is a random permutation chosen uniformly from $S_n$ and independent of $\pi$. By lemma \ref{lem-same-distribution-of-permutation}, we have
\begin{align}
    \mathcal{L}(\sigma^{ij}_{kl}) \overset{d}{=}\mathcal{L}(\pi\mid\substack{\pi(i) = k\\ \pi(j) = l}).
\end{align}
Then it follows by the definition of $W^{(i,j)}_{n}$ in (\ref{def:Wn-ij-and-V}) and the definition of $S^{(i,j)}_{\sigma^{ij}_{kl}}$ in (\ref{def-Sij}) that
\begin{align}
    \mathcal{L}(S^{(i,j)}_{\sigma^{ij}_{kl}}) \overset{d}{=} \mathcal{L}(W_{n}^{(i,j)}\mid\substack{\pi(i) = k\\ \pi(j) = l}).
\end{align}
Hence $\mathbb{E}\left\{\Psi_{t}(W_{n}^{(i,j)})\middle|\substack{\pi(i) = k\\\pi(j) = l}\right\}$ can be replaced by $\mathbb{E}\left\{\Psi_{t}(S^{(i,j)}_{\sigma^{ij}_{kl}})\right\}$, and we perform a Taylor expansion on $\Psi_{t}(w)$ at $T_n = \sum_{i = 1}^{n}a_{i\sigma(i)}+\sum_{i\neq j}b_{ij\sigma(i)\sigma(j)}$. For any index $i\neq j,k\neq l \in [n]$, we have
\begin{align}
    &\left|\mathbb{E}\left\{\Psi_{t}(W_{n}^{(i,j)})\middle|\substack{\pi(i) = k\\\pi(j) = l}\right\}-h(t)\right| = \left|\mathbb{E}\left\{\Psi_{t}(S^{(i,j)}_{\sigma^{ij}_{kl}})\right\}-h(t)\right|\notag\\
    =&\left|t\mathbb{E}\left\{(T_n-S^{(i,j)}_{\sigma^{ij}_{kl}})\Psi_{t}(T_{n})\right\}+t^2\mathbb{E}\left\{(T_{n}-S^{(i,j)}_{\sigma^{ij}_{kl}})^2\Psi_{t}(T_{n}+U(T_{n}-S^{(i,j)}_{\sigma^{ij}_{kl}})(1-U))\right\}\right|\notag
\end{align}
Similarly to (\ref{V-bound}), applying condition (\ref{equ-boundedness-condition-DIPS}), we have $\left|T_n-S^{(i,j)}_{\sigma^{ij}_{kl}}\right|\leq C\delta$. Recalling that $0< t < 1/\delta$, we obtain
\begin{align}
    &\left|\mathbb{E}\left\{\Psi_{t}(W_{n}^{(i,j)})\middle|\substack{\pi(i) = k\\\pi(j) = l}\right\}-h(t)\right| \leq  \left|t\mathbb{E}\left\{(T_n-S^{(i,j)}_{\sigma^{ij}_{kl}})\Psi_{t}(T_{n})\right\}\right| + Ct^2\delta^2h(t).\notag\\
    \leq & \left|\frac{t}{n(n-1)}\sum_{\substack{p\neq q\\p,q \notin \{k,l\}}}\mathbb{E}\left\{(T_n - S^{(i,j)}_{\sigma^{ij}_{kl}})\Psi_{t}(T_n)\middle|\substack{\sigma(i)=p\\\sigma(j)=q}\right\}\right| + \frac{2t\delta}{n}\max_{i\neq j,p\neq q}\left|\mathbb{E}\left\{\Psi_{t}(T_n)\middle|\substack{\sigma(i)=p\\\sigma(j)=q}\right\}-h(t)\right|\notag\\
    &+ C(\frac{1}{n}+t^2\delta^2)h(t).\label{Psi(Wn-ij)-h(t)-bound-decompose}
\end{align}
So we next estimate 
\begin{align}
    \left|\frac{t}{n(n-1)}\sum_{\substack{p\neq q\\ p,q \notin \{k,l\}}}\mathbb{E}\left\{(T_n - S^{(i,j)}_{\sigma^{ij}_{kl}})\Psi_{t}(T_n)\middle|\substack{\sigma(i)=p\\\sigma(j)=q}\right\}\right|
\end{align}
Under the condition
\begin{align}
    \sigma(i) = p,\sigma(j) = q, i\neq j, k\neq l, p\neq q, p,q\notin \{k,l\}, \label{condition-Tn-Sij}
\end{align}
the values of permutation $\sigma^{ij}_{kl}$ and $\sigma$ on indexes $i,j,\sigma^{-1}(k),\sigma^{-1}(l)$ are given in the following table.
\begin{table}[htbp]
    \centering
    \begin{tabular}{ccccccccc}
        \toprule
        permutation & \multicolumn{4}{c}{$\sigma$} & \multicolumn{4}{c}{$\sigma^{ij}_{kl}$} \\
        \cmidrule(lr){2-5} \cmidrule(lr){6-9}
        index & $i$ & $j$ & $\sigma^{-1}(k)$ & $\sigma^{-1}(l)$ & $i$ & $j$ & $\sigma^{-1}(k)$ & $\sigma^{-1}(l)$ \\
        permutation(index) & $p$ & $q$ & $k$ & $l$ & $k$ & $l$ & $p$ & $q$ \\
        \bottomrule
    \end{tabular}
\end{table}

Recalling the definition of $T_n$ and $S^{(i,j)}_{\sigma^{ij}_{kl}}$, under condition (\ref{condition-Tn-Sij}) it follows that,
\begin{align}
    T_n - S^{(i,j)}_{\sigma^{ij}_{kl}}= & a_{ip} + a_{jq} + b_{ijpq} +a_{\sigma^{-1}(k)k} +a_{\sigma^{-1}(l)l} - a_{\sigma^{-1}(k)p} - a_{\sigma^{-1}(l)q} \notag\\
    & + \sum_{s\notin \{i,j\}}(b_{isp\sigma(s)}+b_{si\sigma(s)p}+b_{jsq\sigma(s)}+b_{sj\sigma(s)q})\notag\\
    & + b_{\sigma^{-1}(l)\sigma^{-1}(k)lk}+b_{\sigma^{-1}(k)\sigma^{-1}(l)kl} - b_{\sigma^{-1}(l)\sigma^{-1}(k)qp} - b_{\sigma^{-1}(k)\sigma^{-1}(l)pq}\notag\\
    & + \sum_{s \notin \{i,j,\sigma^{-1}(k),\sigma^{-1}(l)\}}(b_{s\sigma^{-1}(k)\sigma(s)k}+b_{\sigma^{-1}(k)sk\sigma(s)}- b_{s\sigma^{-1}(k)\sigma(s)p} - b_{\sigma^{-1}(k)sp\sigma(s)})\notag\\
    & + \sum_{s \notin \{i,j,\sigma^{-1}(k),\sigma^{-1}(l)\}}(b_{s\sigma^{-1}(l)\sigma(s)l}+b_{\sigma^{-1}(l)sl\sigma(s)} - b_{\sigma^{-1}(l)sq\sigma(s)} - b_{s\sigma^{-1}(l)\sigma(s)q}).\label{decompose-Tn-Sij}
\end{align}
We divide terms of (\ref{decompose-Tn-Sij}) into four groups. The first group contains $a_{ip}$, $a_{jq}$ and $b_{jipq}$ which do not have random index; the second group contains $a_{\sigma^{-1}(k)k}$, $a_{\sigma^{-1}(l)l}$, $a_{\sigma^{-1}(k)p}$, $a_{\sigma^{-1}(l)q}$ and $\sum_{s\notin \{i,j\}}(b_{isp\sigma(s)}+b_{si\sigma(s)p}+b_{jsq\sigma(s)}+b_{sj\sigma(s)q})$ which have one random index; the third group is the thrid line of (\ref{decompose-Tn-Sij}), which has two random indexes; the last group is the fourth and fifth line of (\ref{decompose-Tn-Sij}) with three random indexes. Therefore $\left|\frac{t}{n(n-1)}\sum_{\substack{p\neq q\\ p,q \notin \{k,l\}}}\mathbb{E}\left\{(T_n - S^{(i,j)}_{\sigma^{ij}_{kl}})\Psi_{t}(T_n)\middle|\substack{\sigma(i)=p\\\sigma(j)=q}\right\}\right|$ also be divided into four parts accordingly.

For the terms in $\left|\frac{t}{n(n-1)}\sum_{\substack{p\neq q\\ p,q \notin \{k,l\}}}\mathbb{E}\left\{(T_n - S^{(i,j)}_{\sigma^{ij}_{kl}})\Psi_{t}(T_n)\middle|\substack{\sigma(i)=p\\\sigma(j)=q}\right\}\right|$ corresponding to each part, we can use the same method to estimate their upper bounds, for each part, we only provide the estimation process of the upper bound of a representative term here. The rest can be obtained through the same method.

For the first part, we consider the representative term corresponding to $a_{ip}$. Applying condition (\ref{equ-a-condition-origin}) and (\ref{equ-boundedness-condition-DIPS}), we have 
\begin{align}
    &\left|\frac{t}{n(n-1)}\sum_{\substack{p\neq q\\ p,q \notin \{k,l\}}}\mathbb{E}\left\{a_{ip}\Psi_{t}(T_n)\middle|\substack{\sigma(i)=p\\\sigma(j)=q}\right\}\right|\notag\\
    \leq &\left|\frac{t}{n(n-1)}\sum_{\substack{q = 1\\q\notin \{k,l\}}}^{n}(-a_{iq}-a_{ik}-a_{il})h(t)\right| + \left|\frac{t}{n(n-1)}\sum_{\substack{p\neq q\\ p,q \notin \{k,l\}}}a_{ip}\left(\mathbb{E}\left\{\Psi_{t}(T_n)\middle|\substack{\sigma(i)=p\\\sigma(j)=q}\right\}-h(t)\right)\right|\notag\\
    \leq & \frac{3t\delta}{n}h(t) + t\delta\max_{i\neq j,p\neq q}\left|\mathbb{E}\left\{\Psi_{t}(T_n)\middle|\substack{\sigma(i)=p\\\sigma(j)=q}\right\}-h(t)\right|.\label{first-part-representative-term-bound}
\end{align}
Next, for the second part, we consider the representative term related to $\sum_{s\notin \{i,j\}}b_{isp\sigma(s)}$. Using condition (\ref{equ-b-condition-origin}) and (\ref{equ-boundedness-condition-DIPS}), it follows that
\begin{align}
    &\left|\frac{t}{n(n-1)}\sum_{\substack{p\neq q\\ p,q \notin \{k,l\}}}\mathbb{E}\left\{\sum_{s\notin \{i,j\}}(b_{isp\sigma(s)}\Psi_{t}(T_n)\middle|\substack{\sigma(i)=p\\\sigma(j)=q}\right\}\right|\notag\\
    \leq & \left|\frac{Ct}{n^3}\sum_{\substack{p\neq q\\ p,q \notin \{k,l\}}}\sum_{s\notin\{i,j\}}\sum_{r\notin\{p,q\}}b_{ispr}h(t)\right| + \left|\frac{Ct}{n^3}\sum_{\substack{p\neq q\\ p,q \notin \{k,l\}}}\sum_{s\notin\{i,j\}}\sum_{r\notin\{p,q\}}b_{ispr }\left(\mathbb{E}\left\{\Psi_{t}(T_n)\middle|\substack{\sigma(i)=p\\\sigma(j)=q\\\sigma(s) = r}\right\}-h(t)\right)\right|\notag\\
    \leq & \frac{Ct}{n^3}\sum_{\substack{p\neq q\\ p,q \notin \{k,l\}}}|b_{iipp}+b_{ijpp}+b_{iipq}+b_{ijpq}|h(t) + \frac{Ct}{n}\sum_{s,r}|b_{ispr }|\cdot\max_{\substack{i\neq j\neq s\\p\neq q\neq r}}\left|\mathbb{E}\left\{\Psi_{t}(T_n)\middle|\substack{\sigma(i)=p\\\sigma(j)=q\\\sigma(s) = r}\right\}-h(t)\right| \notag\\
    \leq & \frac{Ct\delta}{n}h(t) + Ct\delta\max_{\substack{i\neq j\neq s\\p\neq q\neq r}}\left|\mathbb{E}\left\{\Psi_{t}(T_n)\middle|\substack{\sigma(i)=p\\\sigma(j)=q\\\pi(s) = r}\right\}-h(t)\right|.\label{second-part-representative-term-bound}
\end{align}
Where $a\neq b\neq c$ means $a,b,c$ are distinct, and $a\neq b\neq c\neq d$ is a similar generalization. Then for the third part, we consider the representative term related to $b_{\sigma^{-1}(l)\sigma^{-1}(k)lk}$, by using condition (\ref{equ-b-condition-origin}) and (\ref{equ-boundedness-condition-DIPS}), we deduce that
\begin{align}
    &\left|\frac{t}{n(n-1)}\sum_{\substack{p\neq q\\ p,q \notin \{k,l\}}}\mathbb{E}\left\{b_{\sigma^{-1}(l)\sigma^{-1}(k)lk}\Psi_{t}(T_n)\middle|\substack{\sigma(i)=p\\\sigma(j)=q}\right\}\right|\notag\\
    \leq & \left|\frac{Ct}{n^4}\sum_{\substack{p\neq q\\p,q \notin\{k,l\}}}\sum_{\substack{u\neq v\\u,v \notin\{i,j\}}}b_{vulk}h(t)\right| + \left|\frac{Ct}{n^4}\sum_{\substack{p\neq q\\p,q \notin\{k,l\}}}\sum_{\substack{u\neq v\\u,v \notin\{i,j\}}}b_{vulk}\left(\mathbb{E}\left\{\Psi_{t}(T_{n})\middle|\substack{\pi(i) = p,\pi(j) = 1\\\pi(u)=p,\pi(v)=l}\right\} - h(t)\right)\right|\notag\\
    \leq & \left|\frac{Ct}{n^4}\sum_{\substack{p\neq q\\p,q \notin\{k,l\}}}\sum_{u\neq i,j}(-b_{iulk}-b_{julk}-b_{uulk})h(t)\right| + Ct\delta\max_{\substack{i\neq j\neq u\neq s\\p\neq q\neq k\neq r}}\left|\mathbb{E}\left\{\Psi_{t}(T_{n})\middle|\substack{\pi(i) = p\\\pi(j) = 1\\\pi(u)=p\\\pi(v)=l}\right\} - h(t)\right|\notag\\
    \leq & \frac{Ct\delta}{n}h(t) + Ct\delta\max_{\substack{i\neq j\neq u\neq s\\p\neq q\neq k\neq r}}\left|\mathbb{E}\left\{\Psi_{t}(T_n)\middle|\substack{\sigma(i)=p\\\sigma(j) = q\\\sigma(u)=k\\\sigma(s) = r}\right\}-h(t)\right|.\label{third-part-representative-term-bound}
\end{align}
For the last part, we consider the representative term related to $\sum_{s \notin \{i,j,\sigma^{-1}(k),\sigma^{-1}(l)\}}b_{s\sigma^{-1}(l)\sigma(s)l}$, by using condition (\ref{equ-b-condition-origin}) and (\ref{equ-boundedness-condition-DIPS}), we have
\begin{align}
    &\left|\frac{t}{n(n-1)}\sum_{\substack{p\neq q\\ p,q \notin \{k,l\}}}\mathbb{E}\left\{\sum_{s \notin \{i,j,\sigma^{-1}(k),\sigma^{-1}(l)\}}b_{s\sigma^{-1}(l)\sigma(s)l}\Psi_{t}(T_n)\middle|\substack{\sigma(i)=p\\\sigma(j)=q}\right\}\right|\notag\\
    \leq & \left|\frac{Ct}{n^5}\sum_{\substack{p\neq q\\ p,q \notin \{k,l\}}}\sum_{\substack{u\neq v \neq s\\ u,v,s \notin \{i,j\}}}\sum_{r\notin \{p,q,k,l\}}b_{svrl}h(t)\right|\notag\\
    &+ \left|\frac{Ct}{n^5}\sum_{\substack{p\neq q\\ p,q \notin \{k,l\}}}\sum_{\substack{u\neq v \neq s\\ u,v,s \notin \{i,j\}}}\sum_{r\notin \{p,q,k,l\}}b_{svrl}\left(\mathbb{E}\left\{\Psi_{t}(T_n)\middle|\substack{\sigma(i)=p\\\sigma(j) = q\\\sigma(u)=k\\\sigma(v)=l\\\sigma(s)=r}\right\}-h(t)\right)\right|\notag\\
    \leq & \frac{Ct}{n^2}\left(\sum_{s,p}|b_{svpl}|h(t)+\sum_{s,q}|b_{svql}|h(t)\right) + \left|\frac{Ct}{n^5}\sum_{\substack{p\neq q\\ p,q \notin \{k,l\}}}\sum_{\substack{u\neq v\\ u,v \notin \{i,j\}}}\sum_{t\in\{i,j,u,v\}}(b_{tvkl}+b_{tvll})h(t)\right| \notag\\
    &+ \frac{Ct}{n}\sum_{r,s}|b_{svrl}| \cdot \max_{\substack{i\neq j\neq u\neq v\neq s\\p\neq q\neq k\neq l\neq r}}\left|\mathbb{E}\left\{\Psi_{t}(T_n)\middle|\substack{\sigma(i)=p\\\sigma(j) = q\\\sigma(u)=k\\\sigma(v)=l\\\sigma(s)=r}\right\}-h(t)\right| \notag\\
    \leq & \frac{Ct\delta}{n}h(t) + Ct\delta\max_{\substack{i\neq j\neq u\neq v\neq s\\p\neq q\neq k\neq l\neq r}}\left|\mathbb{E}\left\{\Psi_{t}(T_n)\middle|\substack{\sigma(i)=p,\sigma(j) = q\\\sigma(u)=k,\sigma(v)=l,\sigma(s)=r}\right\}-h(t)\right|,\label{fourth-part-representative-term-bound}
\end{align}
From (\ref{Psi(Wn-ij)-h(t)-bound-decompose}), (\ref{decompose-Tn-Sij})-(\ref{fourth-part-representative-term-bound}), it follows that for any fixed index $i\neq j, k\neq l$,
\begin{align}
    &\left|\mathbb{E}\left\{\Psi_{t}(W_{n}^{(i,j)})\middle|\substack{\pi(i) = k\\\pi(j) = l}\right\}-h(t)\right| \leq Ct\delta (H+H_{1}+H_{2}+H_{3}) + C(\frac{1}{n}+t^2\delta^2)h(t).\label{inequ:bound-of-difference-PsiWnij-ht-v1}
\end{align}
Where
\begin{align}
    H = & \max_{i\neq j,p\neq q}\left|\mathbb{E}\left\{\Psi_{t}(T_n)\middle|\substack{\sigma(i)=p\\\sigma(j)=q}\right\}-h(t)\right|, \quad H_{1} =  \max_{\substack{i\neq j\neq s\\p\neq q\neq r}}\left|\mathbb{E}\left\{\Psi_{t}(T_n)\middle|\substack{\sigma(i)=p\\\sigma(j)=q\\\pi(s) = r}\right\}-h(t)\right|,\notag\\
    H_{2} = & \max_{\substack{i\neq j\neq u\neq s\\p\neq q\neq k\neq r}}\left|\mathbb{E}\left\{\Psi_{t}(T_n)\middle|\substack{\sigma(i)=p\\\sigma(j) = q\\\sigma(u)=k\\\sigma(s)=r}\right\}-h(t)\right|, \quad H_{3} = \max_{\substack{i\neq j\neq u\neq v\neq s\\p\neq q\neq k\neq l\neq r}}\left|\mathbb{E}\left\{\Psi_{t}(T_n)\middle|\substack{\sigma(i)=p\\\sigma(j) = q\\\sigma(u)=k\\\sigma(v)=l\\\sigma(s)=r}\right\}-h(t)\right|.\notag
\end{align}
To bound the terms $H, H_1, H_2, H_3$ and simplify the right-hand side of (\ref{inequ:bound-of-difference-PsiWnij-ht-v1}), we now invoke the following key lemma:

\begin{lem}\label{bound-H1-H4-general}
Let $\pi$ be a random permutation chosen uniformly from $S_n$ (symmetric group of degree $n$), $W_n$ is defined in (\ref{normalized-DIPS-a-b-form}) and satisfies (\ref{equ-boundedness-condition-DIPS}). Suppose $k < n$, for any fixed index $i_{1},\dots,i_{k}\in[n]$ which are all distinct, and $l_{1},\dots,l_{k}\in[n]$ are also distinct, we have for $0<t<1/\delta$,
\begin{align}
    \left|\mathbb{E}\left\{\Psi_{t}(W_{n})\middle|\substack{\pi(i_{1}) = l_{1}\\...\\\pi(i_{k}) = l_{k}}\right\} - h(t)\right|\leq Ck^2e^{k}t\delta h(t).
\end{align}
\end{lem}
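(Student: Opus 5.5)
Write $H_k:=\max\bigl|\mathbb{E}\{\Psi_t(W_n)\mid \pi(i_1)=l_1,\dots,\pi(i_k)=l_k\}-h(t)\bigr|$, the maximum running over distinct $i_1,\dots,i_k$ and distinct $l_1,\dots,l_k$. The plan is a coupling argument using Lemma \ref{lem-same-distribution-of-permutation}. Let $\sigma$ be uniform on $S_n$. Applying the single-point operators $\mathcal{P}^{i_1}_{l_1},\dots,\mathcal{P}^{i_k}_{l_k}$ to $\sigma$ one at a time gives a permutation $\sigma^{(k)}$, and the composite form (\ref{composite-transformation}) says $\mathcal{L}(\sigma^{(k)})=\mathcal{L}(\pi\mid \pi(i_r)=l_r,\ r\le k)$. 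So
$$\mathbb{E}\{\Psi_t(W_n)\mid\cdot\}=\mathbb{E}\{\Psi_t(S_k)\},\qquad S_k=\sum_i a_{i\sigma^{(k)}(i)}+{\sum_{i,j}}^{\prime}b_{ij\sigma^{(k)}(i)\sigma^{(k)}(j)}.$$
We compare this with $T_n$, the same statistic built from $\sigma$, which satisfies $\mathbb{E}\Psi_t(T_n)=h(t)$.

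\emph{Step 1: $S_k$ and $T_n$ differ by $O(k\delta)$.} The permutation $\sigma^{(k)}$ is $\sigma$ composed with at most $k$ transpositions. Each transposition alters only $O(1)$ positions of the permutation. A single swap changes the statistic by a $\Delta$-type quantity, which the computation preceding the lemma bounds by $16\delta$ using (\ref{equ-boundedness-condition-DIPS}). Hence $|S_k-T_n|\le Ck\delta$.

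\emph{Step 2: Taylor expansion.} Expand
$$\Psi_t(S_k)=\Psi_t(T_n)+t(S_k-T_n)\Psi_t(T_n)\int_0^1 e^{ut(S_k-T_n)}\,du .$$
Since $t<1/\delta$, we have $e^{t|S_k-T_n|}\le e^{Ck}$. Because $\Psi_t(T_n)\ge 0$, this gives
$$|\mathbb{E}\Psi_t(S_k)-h(t)|\le t\,\mathbb{E}\{|S_k-T_n|\,e^{Ck}\Psi_t(T_n)\}\le Ck\,e^{Ck}\,t\delta\, h(t).$$
If the constants are tracked swap by swap, writing $|e^{x}-1|\le |x|e^{|x|}$ at each stage and telescoping over the $k$ stages, the bound becomes $\sum_{r\le k} C r\,e^{r}t\delta h(t)\le Ck^2e^k t\delta h(t)$. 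This is the stated form. The exponent $e^{k}$ is only reached after absorbing constants, so the constant $C$ in the final bound is allowed to depend on the constants in (\ref{equ-boundedness-condition-DIPS}).

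\emph{Main obstacle.} The hard step is making Step 1 rigorous for all the cases in the definition of $\mathcal{P}^{ij}_{kl}$, namely when $\sigma(i_r)$ already equals one of the targets $l_s$, so that a transposition moves an index fixed at an earlier stage. The fix is to treat each $\mathcal{P}^{i}_{l}$ as a product of at most two transpositions. Then the total change is a sum over at most $2k$ swaps, and the pointwise bound $16\delta$ per swap holds deterministically for every permutation. This is exactly what the $\max_{i,\pi}$ in (\ref{equ-boundedness-condition-DIPS}) provides, so no averaging argument is needed.

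A remark on sharpness: this crude argument produces a factor $t\delta$, not the finer $\frac1n+t^2\delta^2$ used in (\ref{term-a-J3-bound-difference}). For the lemma as stated, the crude bound suffices.
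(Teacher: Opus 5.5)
Your argument is essentially the paper's. Both realize the conditional law through the $\mathcal{P}$-operators of Lemma \ref{lem-same-distribution-of-permutation} applied to an independent uniform $\sigma$, bound $|S-T_n|\le Ck\delta$ deterministically via the per-swap bound, and do a first-order Taylor expansion of $e^{tx}$ at $T_n$, with $t<1/\delta$ controlling the exponential factor.

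The paper pairs the constraints and uses $\mathcal{P}^{ij}_{kl}$; your sequential single-point version is, if anything, cleaner. It is not literally (\ref{composite-transformation}), but a one-line induction supplies it. If $\rho$ is uniform on $\{\rho:\rho(i_r)=l_r,\ r<m\}$, then $\rho^{-1}(l_m)\notin\{i_1,\dots,i_{m-1}\}$. So $\mathcal{P}^{i_m}_{l_m}\rho$ never moves a previously fixed position, and each target has exactly $n-m+1$ preimages. Hence the ``main obstacle'' you describe does not arise, and each stage is at most one transposition, not two.

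One step fails as written: the swap-by-swap telescoping does not turn $e^{Ck}$ into $e^{k}$. Each stage gives $|\mathbb{E}\Psi_t(S_r)-\mathbb{E}\Psi_t(S_{r-1})|\le 16t\delta e^{16}\,\mathbb{E}\Psi_t(S_{r-1})$. But $\mathbb{E}\Psi_t(S_{r-1})$ can itself only be bounded by roughly $e^{16(r-1)}h(t)$, so the sum is again of order $e^{Ck}t\delta h(t)$. Reaching $e^{k}$ would need the per-swap change of the statistic to be at most $\delta$, which (\ref{equ-boundedness-condition-DIPS}) does not give. No fixed constant absorbs $e^{16k}$ into $Ce^{k}$ uniformly in $k$.

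The paper's proof has the same slip: it obtains $|V|\le Ck\delta$ but then writes $e^{t\delta k}$. What both arguments honestly prove is $Cke^{Ck}t\delta h(t)$. That suffices for every use in the paper, since the lemma is only invoked with bounded $k$ (at most six conditioned positions).
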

The proof of Lemma \ref{bound-H1-H4-general} is in the last part of Section \ref{proof-of-other-results}. Then, using Lemma \ref{bound-H1-H4-general}, for any fixed index $i\neq j, k\neq l \in [n]$, it follows that
\begin{align}
    \left|\mathbb{E}\left\{\Psi_{t}(W_{n}^{(i,j)})\middle|\substack{\pi(i) = k\\\pi(j) = l}\right\}-h(t)\right| \leq C(\frac{1}{n} + t^2\delta^2)h(t).\label{inequ:bound-of-difference-PsiWnij-ht-final}
\end{align}
By the same argument, we can also obtain for any fixed index $i_{1},\dots,i_{k}\in [n]$ are all distinct, $l_{1},\dots,l_{k}\in [n]$ are all distinct, $k<n$,
\begin{align}
    \left|\mathbb{E}\left\{\Psi_{t}(W^{(i_{1},\dots,i_{k})}_{n})\middle|\substack{\sigma(i_{1})=l_{1}\\\dots\\\sigma(i_{k})=l_{k}}\right\}-h(t)\right| \leq Ck^2(\frac{1}{n}+t^2\delta^2)h(t).\label{general-final-bound-PsiWn-i1-ik-ht}
\end{align}
Therefore we bound the second term of (\ref{J11-decompose}) by
\begin{align}
    \frac{\delta^4}{n(n-1)}\sum_{k\neq l}\sum_{i\neq j}\left|\mathbb{E}\left\{\Psi_{t}(W_{n}^{(i,j)})\middle| \substack{\pi(i) = k\\ \pi(j) = l}\right\}-h(t)\right|\leq C(n\delta^4+n^2\delta^6t^2)h(t).\notag
\end{align}
Consequently, we obtain the bound of $J_{11}$ as
\begin{align}
    J_{11} \leq & \frac{1}{n(n-1)}\left(\sum_{i,j}a^{2}_{ij}\right)^2h(t) + C\left(n\delta^4+n^2\delta^6t^2\right)h(t),\label{J11-bound-final}
\end{align}
Then for $J_{12}$, together with (\ref{def:Wn-ij-and-V}) and (\ref{V-bound}), we deduce that
\begin{align}
    J_{12} =& \frac{t}{n(n-1)}\sum_{i\neq j}\sum_{k\neq l}a^{2}_{ik}a^{2}_{jl}\mathbb{E}\left\{V_{ij}\Psi_{t}(W_{n}^{(i,j)})\middle| \substack{\pi(i) = k\\ \pi(j) = l}\right\}\notag\\
    =& \frac{t}{n(n-1)}\sum_{i\neq j}\sum_{k\neq l}a^{2}_{ik}a^{2}_{jl}\mathbb{E}\left\{\left(a_{i\pi(i)} + a_{j\pi(j)}\right)\Psi_{t}(W_{n}^{(i,j)})\middle| \substack{\pi(i) = k\\ \pi(j) = l}\right\}\notag\\
    & + \frac{t}{n(n-1)}\sum_{i\neq j}\sum_{k\neq l}a^{2}_{ik}a^{2}_{jl}\mathbb{E}\left\{\left(\sum_{\substack{p = 1\\ p\neq i}}^{n}(b_{ip\pi(i)\pi(p)}+b_{pi\pi(p)\pi(i)})\right)\Psi_{t}(W_{n}^{(i,j)})\middle| \substack{\pi(i) = k\\ \pi(j) = l}\right\},\notag\\
    & + \frac{t}{n(n-1)}\sum_{i\neq j}\sum_{k\neq l}a^{2}_{ik}a^{2}_{jl}\mathbb{E}\left\{\left(\sum_{\substack{p = 1\\ p\notin \{i,j\}}}^{n}(b_{jp\pi(j)\pi(p)}+b_{pj\pi(p)\pi(j)})\right)\Psi_{t}(W_{n}^{(i,j)})\middle| \substack{\pi(i) = k\\ \pi(j) = l}\right\},\notag\\
    \leq & \frac{2t}{n(n-1)}\sum_{i\neq j}\sum_{k\neq l}a^{3}_{ik}a^{2}_{jl}\mathbb{E}\left\{\Psi_{t}(W_{n}^{(i,j)})\middle|\substack{\pi(i) = k\\\pi(j) = l}\right\}\notag\\
    & +  \frac{Ct}{n^3}\sum_{i\neq j}\sum_{k\neq l}a^{2}_{ik}a^{2}_{jl}\left(\sum_{\substack{p = 1\\ p\notin \{i,j\}}}^{n}\sum_{\substack{t = 1\\ t\notin \{k,l\}}}^{n}(b_{ipkt}+b_{pitk})\mathbb{E}\left\{\Psi_{t}(W_{n}^{(i,j)})\middle|\substack{\pi(i) = k\\\pi(j) = l\\\pi(p) = t}\right\}\right)\notag\\
    \leq & \frac{2t}{n^2}\sum_{i\neq j}\sum_{k\neq l}a^{3}_{ik}a^{2}_{jl}h(t) + Cn^2\delta^5t\left|\mathbb{E}\left\{\Psi_{t}(W_{n}^{(i,j)})\middle|\substack{\pi(i) = k\\\pi(j) = l}\right\}-h(t)\right| + n\delta^4h(t)\notag\\
    & + Cn^2\delta^5t\max_{\substack{i\neq j\neq p\\k\neq l\neq t}}\left|\mathbb{E}\left\{\Psi_{t}(W_{n}^{(i,j,k)})\middle|\substack{\pi(i) = k\\\pi(j) = l\\\pi(p) = t}\right\}-h(t)\right|\notag\\
    & +  Cn^2\delta^5t^2\max_{\substack{i\neq j\neq p\\k\neq l\neq t}}\left|\mathbb{E}\left\{(W^{(i,j)}_{n} - W^{(i,j,k)}_{n})\Psi_{t}(W_{n}^{(i,j,k)})\middle|\substack{\pi(i) = k\\\pi(j) = l\\\pi(p) = t}\right\}\right|\notag\\
    &+  Cn^2\delta^5t^3\max_{\substack{i\neq j\neq p\\k\neq l\neq t}}\left|\mathbb{E}\left\{(W^{(i,j)}_{n} - W^{(i,j,k)}_{n})^2\Psi_{t}(W_{n}^{(i,j,k)} + U(W^{(i,j)}_{n} - W^{(i,j,k)}_{n}) )(1-U)\middle|\substack{\pi(i) = k\\\pi(j) = l\\\pi(p) = t}\right\}\right|.\label{J12-decompose}
\end{align}
Where we use (\ref{equ-b-condition-origin}), (\ref{equ-boundedness-condition-DIPS}) and the range $0<t<1/\delta$ in the last inequality. By condition (\ref{equ-boundedness-condition-DIPS}), we have $|W^{(i,j)}_{n} - W^{(i,j,p)}_{n}|\leq C\delta$. Then in view of Lemma \ref{bound-H1-H4-general} and (\ref{general-same-distribution-of-permutation-k})-(\ref{lem-bound-H1-H4-general-bound-1}), it follows that
\begin{align}
    J_{12} \leq \frac{2t}{n^2}\sum_{i\neq j}\sum_{k\neq l}a^{3}_{ik}a^{2}_{jl}h(t) + C(n\delta^4 + n^2\delta^6t^2)h(t).\label{J12-bound-final}
\end{align}
We have already established the bound for $J_{12}$. We next consider $J_{13}$, together with (\ref{inequ:bound-of-difference-PsiWnij-ht-final}) we obtain
\begin{align}
    J_{13} = & \frac{t^2}{n(n-1)}\sum_{k\neq l}\sum_{i\neq j}a^{2}_{ik}a^{2}_{jl}\mathbb{E}\left\{V^2\Psi_{t}(W_{n}^{(i,j)}+UV)(1-U)\middle| \substack{\pi(i) = k\\ \pi(j) = l}\right\} \notag\\
    \leq & \frac{t^2\delta^2}{n(n-1)}\sum_{k\neq l}\sum_{i\neq j}a^{2}_{ik}a^{2}_{jl}e^{t\delta}h(t) + \frac{t^2\delta^2}{n(n-1)}\sum_{k\neq l}\sum_{i\neq j}a^{2}_{ik}a^{2}_{jl}e^{t\delta}\left|\mathbb{E}\left\{\Psi_{t}(W_{n}^{(i,j)})\middle| \substack{\pi(i) = k\\ \pi(j) = l}\right\}-h(t)\right|\notag\\
    \leq & Cn^2\delta^6t^2 h(t).\label{J13-bound-final}
\end{align}
Combine (\ref{J11-bound-final}), (\ref{J12-bound-final}) and (\ref{J13-bound-final}), we have the following bound for $J_{1}$
\begin{align}
    J_{1} \leq & \frac{1}{n(n-1)}\left(\sum_{i,j}a^{2}_{ij}\right)^2h(t) + \frac{2t}{n^2}\sum_{i\neq j}\sum_{k\neq l}a^{3}_{ik}a^{2}_{jl}h(t) + C\left(n\delta^4+n^2\delta^6t\right)h(t).\label{J1-bound-final}
\end{align}
Next we consider $J_{2}$, for a fix index $k\in[n]$, we define $W_n^{(k)}$ which is close to $W_{n}$ as follows
\begin{align}
    W_{n}^{(k)} = \sum_{\substack{i = 1\\ i\neq k}}^{n}a_{i\pi(i)} + \sum_{\substack{i\neq j\\ i,j\neq k}}b_{ij\pi(i)\pi(j)}.\notag
\end{align}
Together with (\ref{equ-boundedness-condition-DIPS}), we easily obtain $|V^{\prime}| = |W_n - W_{n}^{(k)}|\leq 3\delta$. Then we do the Taylor expansion of $\Psi_{t}(x)$ at the point $W_{n}^{(k)}$, it follows that
\begin{align}
    J_{2} = & \frac{2}{n^2}\sum_{i,j}\sum_{k,l}a^{2}_{ij}a^{2}_{kl}\mathbb{E}\{\Psi_{t}(W_n)\mid\pi(k) = l\}\notag\\
    = & \frac{2}{n^2}\sum_{i,j}\sum_{k,l}a^{2}_{ij}a^{2}_{kl}\mathbb{E}\{\Psi_{t}(W_{n}^{(k)})\mid\pi(k) = l\}\notag\\
    &+ \frac{2t}{n^2}\sum_{i,j}\sum_{k,l}a^{2}_{ij}a^{2}_{kl}\mathbb{E}\{V^{\prime}\Psi_{t}(W_{n}^{(k)})\mid\pi(k) = l\}\notag\\
    &+ \frac{2t^2}{n^2}\sum_{i,j}\sum_{k,l}a^{2}_{ij}a^{2}_{kl}\mathbb{E}\{V^{\prime 2}\Psi_{t}(W_{n}^{(k)}+UV^{\prime})(1-U)\mid\pi(k) = l\}\notag\\
    := & J_{21} + J_{22} + J_{23}.\label{J2-decompose}
\end{align}
We have divided $J_{2}$ into three parts, and next, we will consider these three parts respectively. We first consider $J_{21}$, applying add and subtract technique, we have
\begin{align}
    J_{21} \geq & \frac{2}{n^2}\left(\sum_{i,j}a^{2}_{ij}\right)^2h(t) -  \frac{2}{n^2}\sum_{i,j}\sum_{k,l}a^{2}_{ij}a^{2}_{kl}\left|\mathbb{E}\{\Psi_{t}(W_{n}^{(k)})\mid\pi(k) = l\} - h(t)\right|. \notag
\end{align}
Then we define $\sigma^{k}_{l}$ and $S^{(k)}_{\sigma^{k}_{l}}$ as 
\begin{align}
    \sigma^{k}_{l} = \begin{cases}
        \sigma, & \sigma(k) = l,\\
        \sigma\circ\tau_{k,\pi^{-1}(l)}, & \sigma(k) \neq l,
    \end{cases}\notag
\end{align}
\begin{align}
    S^{(k)}_{\sigma^{k}_{l}} = \sum_{\substack{i = 1\\ i\neq k}}^{n}a_{i\sigma^{k}_{l}(i)} + \sum_{\substack{i\neq j\\ i,j\neq k}}b_{ij\sigma^{k}_{l}(i)\sigma^{k}_{l}(j)}.\notag
\end{align}
Under Lemma \ref{lem-same-distribution-of-permutation}, we obtain $\mathcal{L}(S^{(k)}_{\sigma^{k}_{l}}) \overset{d}{=}\mathcal{L}(W_n^{(k)}\mid\pi(k) = l)$. Under condition (\ref{equ-boundedness-condition-DIPS}), we have $|W_n-S^{(k)}_{\sigma^{k}_{l}}| \leq C\delta$. Applying (\ref{general-final-bound-PsiWn-i1-ik-ht}) we have for any $k,l\in[n]$,
\begin{align}
    \left|\mathbb{E}\{\Psi_{t}(W_{n}^{(k)})\mid\pi(k) = l\} - h(t)\right| \leq C(\frac{1}{n} + t^2\delta^2)h(t),\label{inequ:bound-of-difference-PsiWnk-ht-final}
\end{align}
Therefore, we obtain the lower bound of $J_{21}$ as
\begin{align}
    J_{21} \geq \frac{2}{n^2}\left(\sum_{i,j}a^{2}_{ij}\right)^2h(t) -  C(n\delta^4 + n^2\delta^6t^2)h(t),\label{J21-bound-final}
\end{align}
Next we consider $J_{22}$, since $V^{\prime} = a_{k\pi(k)} + \sum_{\substack{s = 1\\s \neq k}}^{n}b_{sk\pi(s)\pi(k)}+b_{ks\pi(k)\pi(s)}$, we expand $J_{22}$ by definition of $V^{\prime}$ as
\begin{align}
    J_{22} = & \frac{2t}{n^2}\sum_{i,j}\sum_{k,l}a^{2}_{ij}a^{3}_{kl}\mathbb{E}\left\{\Psi_{t}(W_{n}^{(k)})\middle|\pi(k) = l\right\}\notag\\
    &+\frac{2t}{n^2}\sum_{i,j}\sum_{k,l}a^{2}_{ij}a^{2}_{kl}\left(\frac{1}{n-1}\sum_{\substack{s = 1\\s\neq k}}^{n}\sum_{\substack{t =1\\t\neq l}}^{n}(b_{sktl}+b_{kslt})\mathbb{E}\left\{\Psi_{t}(W_{n}^{(k)})\middle|\substack{\pi(k) = l\\\pi(s) = t}\right\}\right),\notag\\
    \geq & \frac{2t}{n^2}\sum_{i\neq j}\sum_{k\neq l}a^{3}_{ik}a^{2}_{jl}h(t) - n^2\delta^4\max_{k,l}\left|\mathbb{E}\left\{\Psi_{t}(W_{n}^{(k)})\middle|\pi(k) = l\right\}-h(t)\right|\notag\\
    &-2n\delta^4\left(h(t)+\max_{k\neq s,l\neq t}\left|\mathbb{E}\left\{\Psi_{t}(W_{n}^{(k)})\middle|\substack{\pi(k) = l\\\pi(s) = t}\right\}-h(t)\right|\right),\notag
\end{align}
together with (\ref{inequ:bound-of-difference-PsiWnij-ht-final}) and (\ref{inequ:bound-of-difference-PsiWnk-ht-final}), it follows that
\begin{align}
    J_{22} \geq \frac{2t}{n^2}\sum_{i\neq j}\sum_{k\neq l}a^{3}_{ik}a^{2}_{jl}h(t) - C(n\delta^4 + n^2\delta^6t^2)h(t).\label{J22-bound-final}
\end{align}
Then we consider the lower bound of $J_{13}$, by (\ref{inequ:bound-of-difference-PsiWnk-ht-final}) we bound $J_{23}$ as
\begin{align}
    J_{23} = & \frac{2t^2}{n^2}\sum_{i,j}\sum_{k,l}a^{2}_{ij}a^{2}_{kl}\mathbb{E}\{V^{\prime 2}\Psi_{t}(W_{n}^{(k)}+UV^{\prime})(1-U)\mid\pi(k) = l\} \notag\\
    \geq & -\frac{2t^2}{n^2}\sum_{i,j}\sum_{k,l}a^{2}_{ij}a^{2}_{kl}\delta^2e^{t\delta}\left(h(t) + \left|\mathbb{E}\left\{\Psi_{t}(W_{n}^{(k)})\middle|\pi(k) = l\right\}-h(t)\right| \right) \notag\\
    \geq &  -Cn^2\delta^6t^2 h(t).\label{J23-bound-final}
\end{align}
Combining (\ref{J2-decompose}) and (\ref{J21-bound-final})-(\ref{J23-bound-final}), we deduce that
\begin{align}
    J_{2} \geq & \frac{2}{n^2}\left(\sum_{i,j}a^{2}_{ij}\right)^2h(t) + \frac{2t}{n^2}\sum_{i\neq j}\sum_{k\neq l}a^{3}_{ik}a^{2}_{jl}h(t) - Cn^2\delta^6t^2 h(t).\label{J2-bound-final}
\end{align}
Together with (\ref{equ-a-square-form-decompose}), (\ref{J1-bound-final}) and (\ref{J2-bound-final}), we complete the proof (\ref{inequ:final-bound-a-square-form}). By a same argument, we can also proof (\ref{inequ:final-bound-aij-square-form}).
Then we consider (\ref{inequ:final-bound-b-square-form}), The proof approaches of (\ref{inequ:final-bound-a-square-form}) and (\ref{inequ:final-bound-b-square-form}) are similar, but the property of $\{b(i,j,k,l)\}_{i,j,k,l\in[n]}$ is repeatedly utilized. We first decompose the left hand side of (\ref{inequ:final-bound-b-square-form}) as
\begin{align}
    & \mathbb{E}\left\{\left(\sum_{i\neq j}b^{2}_{ij\pi(i)\pi(j)}-\mathbb{E}\left\{\sum_{i\neq j}b^{2}_{ij\pi(i)\pi(j)}\right\}\right)^2e^{tW_n}\right\} \notag\\
    =&\frac{1}{n^2(n-1)^2}\left(\sum_{i\neq j}\sum_{k\neq l}b^{2}_{ijkl}\right)^2h(t) + \mathbb{E}\left\{\sum_{i\neq j}\sum_{p\neq q}b^{2}_{ij\pi(i)\pi(j)}b^{2}_{pq\pi(p)\pi(q)}\Psi_{t}(W_n)\right\}\notag\\
    &  - \frac{2}{n(n-1)}\sum_{i\neq j}\sum_{k\neq l}b^{2}_{ijkl}\mathbb{E}\left\{\sum_{p\neq q}b^{2}_{pq\pi(p)\pi(q)}\Psi_{t}(W_n)\right\}\notag\\
    :=& \frac{1}{n^2(n-1)^2}\left(\sum_{i\neq j}\sum_{k\neq l}b^{2}_{ijkl}\right)^2h(t) + Q_1  - Q_2.\label{equ:b-square-form-decompose}
\end{align}
We estimate $Q_1$ and $Q_2$ respectively. We first consider $Q_1$ and decompose it into three parts
\begin{align}
    Q_1 = & \mathbb{E}\left\{\sum_{i\neq j\neq p\neq q}b^{2}_{ij\pi(i)\pi(j)}b^{2}_{pq\pi(p)\pi(q)}\Psi_{t}(W_n)\right\} \notag\\
    &+ \mathbb{E}\left\{\sum_{i\neq j}\sum_{\substack{p = 1\\p\notin\{i,j\}}}^{n}\left(b^{2}_{ij\pi(i)\pi(j)}b^{2}_{pi\pi(p)\pi(i)}+b^{2}_{ij\pi(i)\pi(j)}b^{2}_{pj\pi(p)\pi(j)}\right)\Psi_{t}(W_n)\right\}\notag\\
    & + \mathbb{E}\left\{\sum_{i\neq j}\left(\sum_{\substack{q = 1\\q\neq i}}^{n}b^{2}_{ij\pi(i)\pi(j)}b^{2}_{iq\pi(i)\pi(q)}+\sum_{\substack{q = 1\\q\neq j}}^{n}b^{2}_{ij\pi(i)\pi(j)}b^{2}_{jq\pi(j)\pi(q)}\right)\Psi_{t}(W_n)\right\}\notag\\
    :=& Q_{11} + Q_{12} + Q_{13}.\label{Q1-decompose}
\end{align}
For $Q_{11}$, by doing the Taylor expansion, we decompose it into three parts
\begin{align}
    Q_{11} = & \frac{(n-4)!}{n!}\sum_{i\neq j\neq p\neq q}\sum_{k\neq l\neq u\neq v}b^{2}_{ijkl}b^{2}_{pquv}\mathbb{E}\left\{\Psi_{t}(W_n)\middle|\substack{\pi(i)=k,\pi(j)=l\\\pi(p)=u,\pi(q)=v}\right\}\notag\\
    = & \frac{(n-4)!}{n!}\sum_{i\neq j\neq p\neq q}\sum_{k\neq l\neq u\neq v}b^{2}_{ijkl}b^{2}_{pquv}\mathbb{E}\left\{\Psi_{t}(W_{n}^{(i,j,p,q)})\middle|\substack{\pi(i)=k,\pi(j)=l\\\pi(p)=u,\pi(q)=v}\right\}\notag\\
    & + \frac{(n-4)!t}{n!}\sum_{i\neq j\neq p\neq q}\sum_{k\neq l\neq u\neq v}b^{2}_{ijkl}b^{2}_{pquv}\mathbb{E}\left\{V_{ijpq}\Psi_{t}(W_{n}^{(i,j,p,q)})\middle|\substack{\pi(i)=k,\pi(j)=l\\\pi(p)=u,\pi(q)=v}\right\}\notag\\
    & + \frac{(n-4)!t^2}{n!}\sum_{i\neq j\neq p\neq q}\sum_{k\neq l\neq u\neq v}b^{2}_{ijkl}b^{2}_{pquv}\mathbb{E}\left\{V_{ilpq}^{2}\Psi_{t}(W_{n}^{(i,j,p,q)}+UV_{ijpq})(1-U)\middle|\substack{\pi(i)=k,\pi(j)=l\\\pi(p)=u,\pi(q)=v}\right\}\notag\\
    := & Q_{111} + Q_{112} + Q_{113}.\label{Q11-decompose}
\end{align}
We first consider $Q_{111}$. Applying (\ref{general-final-bound-PsiWn-i1-ik-ht}), for any fixed index $i,j,p,q\in[n]$ are all distinct, and $k,l,u,v\in[n]$ are all distinct, it follows that
\begin{align}
    \left|\mathbb{E}\left\{\Psi_{t}(W_{n}^{(i,j,p,q)})\middle|\substack{\pi(i)=k,\pi(j)=l\\\pi(p)=u,\pi(q)=v}\right\} - h(t)\right| \leq C\left(\frac{1}{n} +\delta^2t^2\right)h(t).\label{inequ:bound-of-difference-PsiWnijpq-ht-final}
\end{align}
Together with condition (\ref{equ-boundedness-condition-DIPS}), we deduce that
\begin{align}
    Q_{111} \leq & \frac{1}{n^2(n-1)^2}\left(\sum_{i\neq j}\sum_{k\neq l}b^{2}_{ijkl}\right)^2h(t) + \frac{C}{n^4}\left(\sum_{i, j}\sum_{k, l}b^{2}_{ijkl}\right)^2\left(\frac{1}{n} +\delta^2t^2\right)h(t)\notag\\
    \leq & \frac{1}{n^2(n-1)^2}\left(\sum_{i\neq j}\sum_{k\neq l}b^{2}_{ijkl}\right)^2h(t) + C\left(n\delta^4+n^2\delta^6t^2\right)h(t).\label{Q111-bound-final}
\end{align}
As for $Q_{112}$, since
\begin{align}
    V_{ijpq} = & a_{i\pi(i)} + a_{j\pi(j)} + a_{p\pi(p)} + a_{q\pi(q)} + \sum_{\substack{s = 1\\s\neq i}}^{n}(b_{si\pi(s)\pi(i)}+b_{is\pi(i)\pi(s)})\notag\\
    & +\sum_{\substack{s = 1\\s\notin \{i,j\}}}^{n}(b_{sj\pi(s)\pi(j)}+b_{js\pi(j)\pi(s)}) + \sum_{\substack{s = 1\\s\notin \{i,j,p\}}}^{n}(b_{sp\pi(s)\pi(p)}+b_{ps\pi(p)\pi(s)})\notag\\
    &+ \sum_{\substack{s = 1\\s\notin \{i,j,p,q\}}}^{n}(b_{sq\pi(s)\pi(q)}+b_{qs\pi(q)\pi(s)}),\notag
\end{align}
we expand $Q_{112}$ by its definition and using (\ref{equ-boundedness-condition-DIPS}), (\ref{general-final-bound-PsiWn-i1-ik-ht}). It then follows that
\begin{align}
    Q_{112} \leq & \frac{2t}{n^2(n-1)^2}\sum_{i\neq j\neq p\neq q}\sum_{k\neq l\neq u\neq v}b^{2}_{ijkl}b^{2}_{pquv}(a_{pu}+a_{qv})h(t) + \frac{C}{n^4}\left(\sum_{i,j}\sum_{k,l}b^{2}_{ijkl}\right)^2\left(\frac{1}{n}+\delta^2t^2\right)h(t)\notag\\
    \leq & \frac{2t}{n^2(n-1)^2}\sum_{i\neq j\neq p\neq q}\sum_{k\neq l\neq u\neq v}b^{2}_{ijkl}b^{2}_{pquv}(a_{pu}+a_{qv})h(t) + C\left(n\delta^4+n^2\delta^6t^2\right)h(t).\label{Q112-bound-final} 
\end{align}
Next we consider $Q_{113}$, by (\ref{equ-boundedness-condition-DIPS}) and (\ref{general-final-bound-PsiWn-i1-ik-ht}), we obtain
\begin{align}
    Q_{113} \leq & \frac{2t^2\delta^2}{n^2(n-1)^2}\left(\sum_{i,j}\sum_{k,l}b^{2}_{ijkl}\right)^2h(t) + \frac{C}{n^4}\left(\sum_{i,j}\sum_{k,l}b^{2}_{ijkl}\right)^2\left(\frac{1}{n}+\delta^2t^2\right)h(t)\notag\\
    \leq & Cn^2\delta^6t^2 h(t).\label{Q113-bound-final}
\end{align}
Combining (\ref{Q11-decompose}), (\ref{Q111-bound-final})-(\ref{Q113-bound-final}), we deduce that
\begin{align}
    Q_{11} \leq & \frac{1}{n^2(n-1)^2}\left(\sum_{i\neq j}\sum_{k\neq l}b^{2}_{ijkl}\right)^2h(t) + \frac{2t}{n^2(n-1)^2}\sum_{i\neq j\neq p\neq q}\sum_{k\neq l\neq u\neq v}b^{2}_{ijkl}b^{2}_{pquv}(a_{pu}+a_{qv})h(t)\notag\\
    & +C\left(n\delta^4+n^2\delta^6t^2\right)h(t).\label{Q11-bound-final}
\end{align}
By the same argument of $Q_{11}$, we have
\begin{align}
    Q_{12} \leq C\left(n\delta^4+n^2\delta^6t^2\right)h(t),\quad Q_{13} \leq C\left(n\delta^4+n^2\delta^6t^2\right)h(t).\label{Q12-Q13-bound-final}
\end{align}
Together with (\ref{Q1-decompose}), (\ref{Q11-bound-final}) and (\ref{Q12-Q13-bound-final}), it follows that
\begin{align}
    Q_{1} \leq & \frac{1}{n^2(n-1)^2}\left(\sum_{i\neq j}\sum_{k\neq l}b^{2}_{ijkl}\right)^2h(t) + \frac{2t}{n^2(n-1)^2}\sum_{i\neq j\neq p\neq q}\sum_{k\neq l\neq u\neq v}b^{2}_{ijkl}b^{2}_{pquv}(a_{pu}+a_{qv})h(t)\notag\\
    & +C\left(n\delta^4+n^2\delta^6t^2\right)h(t).\label{Q1-bound-final}
\end{align}
For $Q_2$, by Taylor expansion we decompose it into three parts
\begin{align}
    Q_2 = &\frac{2}{n^2(n-1)^2}\sum_{i\neq j}\sum_{p\neq q}\sum_{k\neq l}\sum_{u\neq v}b^{2}_{ijkl}b^{2}_{pquv}\mathbb{E}\left\{\Psi_{t}(W_{n}^{(p,q)})\middle|\substack{\pi(p) = u\\\pi(q) = v}\right\}\notag\\
    &+ \frac{2t}{n^2(n-1)^2}\sum_{i\neq j}\sum_{p\neq q}\sum_{k\neq l}\sum_{u\neq v}b^{2}_{ijkl}b^{2}_{pquv}\mathbb{E}\left\{V_{pq}\Psi_{t}(W_{n}^{(p,q)})\middle|\substack{\pi(p) = u\\\pi(q) = v}\right\}\notag\\
    &+ \frac{2t^2}{n^2(n-1)^2}\sum_{i\neq j}\sum_{p\neq q}\sum_{k\neq l}\sum_{u\neq v}b^{2}_{ijkl}b^{2}_{pquv}\mathbb{E}\left\{V^{2}_{pq}\Psi_{t}(W_{n}^{(p,q)}+UV_{pq}(1-U))\middle|\substack{\pi(p) = u\\\pi(q) = v}\right\}\notag\\
    := & Q_{21} + Q_{22} + Q_{23}.\label{Q2-decompose}
\end{align} 
where $U$ is a uniform random variable on $[0,1]$ and independent of any other random variables and
\begin{align}
    V_{pq} = a_{p\pi(p)}+a_{q\pi(q)}+ \sum_{\substack{i = 1\\i\neq p}}^{n}(b_{pi\pi(p)\pi(i)}+b_{ip\pi(i)\pi(p)})+\sum_{\substack{i = 1\\i\notin \{p,q\}}}^{n}(b_{iq\pi(i)\pi(q)}+b_{qi\pi(q)\pi(i)}).\notag
\end{align}
Using add and subtract technique and applying (\ref{general-final-bound-PsiWn-i1-ik-ht}), (\ref{equ-boundedness-condition-DIPS}), we obtain the lower bound of $Q_{21}$ as
\begin{align}
    Q_{21} \geq & \frac{2}{n^2(n-1)^2}\left(\sum_{i\neq j}\sum_{k\neq l}b^{2}_{ijkl}\right)^2h(t) - \frac{C}{n^4}\left(\sum_{i, j}\sum_{k, l}b^{2}_{ijkl}\right)^2\left(\frac{1}{n} +\delta^2t^2\right)h(t)\notag\\
    \geq & \frac{2}{n^2(n-1)^2}\left(\sum_{i\neq j}\sum_{k\neq l}b^{2}_{ijkl}\right)^2h(t) - C(n\delta^4+n^2\delta^6t^2)h(t).\label{Q21-bound-final}
\end{align}
Next for $Q_{22}$, we expand it by the definiton of $V_{pq}$ as
\begin{align}
    Q_{22} = & \frac{2t}{n^2(n-1)^2}\sum_{i\neq j}\sum_{p\neq q}\sum_{k\neq l}\sum_{u\neq v}b^{2}_{ijkl}b^{2}_{pquv}(a_{pu}+a_{qv})\mathbb{E}\left\{\Psi_{t}(W_{n}^{(p,q)})\middle|\substack{\pi(p) = u\\\pi(q = v)}\right\}\notag\\
    &+ \frac{2t}{n^2(n-1)^2}\sum_{i\neq j}\sum_{p\neq q}\sum_{k\neq l}\sum_{u\neq v}b^{2}_{ijkl}b^{2}_{pquv}\mathbb{E}\left\{\left(\sum_{\substack{s = 1\\s\neq p}}^{n}(b_{ps\pi(p)\pi(s)}+b_{sp\pi(s)\pi(p)})\right)\Psi_{t}(W_{n}^{(p,q)})\middle|\substack{\pi(p) = u\\\pi(q = v)}\right\},\notag\\
    &+ \frac{2t}{n^2(n-1)^2}\sum_{i\neq j}\sum_{p\neq q}\sum_{k\neq l}\sum_{u\neq v}b^{2}_{ijkl}b^{2}_{pquv}\mathbb{E}\left\{\left(\sum_{\substack{s = 1\\s\notin \{p,q\}}}^{n}(b_{sq\pi(s)\pi(q)}+b_{qs\pi(q)\pi(s)})\right)\Psi_{t}(W_{n}^{(p,q)})\middle|\substack{\pi(p) = u\\\pi(q = v)}\right\}.\notag
\end{align}
Since
\begin{align}
    &\sum_{i\neq j}\sum_{p\neq q}\sum_{k\neq l}\sum_{u\neq v}b^{2}_{ijkl}b^{2}_{pquv}(a_{pu}+a_{qv})\notag\\
    = & \sum_{i\neq j\neq p\neq q}\sum_{k\neq l\neq u\neq v}b^{2}_{ijkl}b^{2}_{pquv}(a_{pu}+a_{qv})\notag\\
    &+\sum_{i\neq j}\sum_{p=1}^{n}\sum_{k\neq l}\sum_{u\neq v}\left(b^{2}_{ijkl}b^{2}_{piuv}(a_{pu}+a_{iv})+b^{2}_{ijkl}b^{2}_{pjuv}(a_{pu}+a_{jv})\right)\notag\\
    &+\sum_{i\neq j}\sum_{\substack{q = 1\\q\notin \{i,j\}}}^{n}\sum_{k\neq l}\sum_{u\neq v}\left(b^{2}_{ijkl}b^{2}_{iquv}(a_{iu}+a_{qv})+ b^{2}_{ijkl}b^{2}_{jquv}(a_{ju}+a_{qv})\right)\notag\\
    &+\sum_{i\neq j\neq p\neq q}\sum_{k\neq l}\sum_{u = 1}^{n}\left(b^{2}_{ijkl}b^{2}_{pquk}(a_{pu}+a_{qk})+b^{2}_{ijkl}b^{2}_{pqul}(a_{pu}+a_{ql})\right)\notag\\
    &+\sum_{i\neq j\neq p\neq q}\sum_{k\neq l}\sum_{\substack{v = 1\\v\notin \{k,l\}}}^{n}\left(b^{2}_{ijkl}b^{2}_{pqkv}(a_{pk}+a_{qv})+b^{2}_{ijkl}b^{2}_{pqlv}(a_{pl}+a_{qv})\right),\notag
\end{align}
together with (\ref{equ-boundedness-condition-DIPS}) and (\ref{general-final-bound-PsiWn-i1-ik-ht}), it follows that
\begin{align}
    Q_{22} \geq & \frac{2t}{n^2(n-1)^2}\sum_{i\neq j\neq p\neq q}\sum_{k\neq l\neq u\neq v}b^{2}_{ijkl}b^{2}_{pquv}(a_{pu}+a_{qv})h(t) - n\delta^4h(t)\notag\\
    & - \frac{C}{n^4}\left(\sum_{i,j}\sum_{k,l}b^{2}_{ijkl}\right)^2\left(\frac{1}{n}+\delta^2t^2\right)h(t)\notag\\
    \geq & \frac{2t}{n^2(n-1)^2}\sum_{i\neq j\neq p\neq q}\sum_{k\neq l\neq u\neq v}b^{2}_{ijkl}b^{2}_{pquv}(a_{pu}+a_{qv})h(t) - C(n\delta^4+n^2\delta^6t^2)h(t).\label{Q22-bound-final}
\end{align}
Then, we consider $Q_{23}$, also by (\ref{equ-boundedness-condition-DIPS}) and (\ref{general-final-bound-PsiWn-i1-ik-ht}), we obtain
\begin{align}
    Q_{23} \geq & -\frac{C}{n^4}\left(\sum_{i,j}\sum_{k,l}b^{2}_{ijkl}\right)^2\delta^2t^2 h(t)\geq -Cn^2\delta^6t^2 h(t).\label{Q23-bound-final}
\end{align}
From (\ref{Q2-decompose})-(\ref{Q23-bound-final}), we have a lower bound of $Q_2$ as
\begin{align}
    Q_2 \geq & \frac{2}{n^2(n-1)^2}\left(\sum_{i\neq j}\sum_{k\neq l}b^{2}_{ijkl}\right)^2h(t) + \frac{2t}{n^2(n-1)^2}\sum_{i\neq j\neq p\neq q}\sum_{k\neq l\neq u\neq v}b^{2}_{ijkl}b^{2}_{pquv}(a_{pu}+a_{qv})h(t) \notag\\
    &  -C(n\delta^4+n^2\delta^6t^2)h(t).\label{Q2-bound-final} 
\end{align}
Combining (\ref{equ:b-square-form-decompose}), (\ref{Q1-bound-final}) and (\ref{Q2-bound-final}), we complete the proof of (\ref{inequ:final-bound-b-square-form}). By a same argument, we can also proof (\ref{inequ:final-bound-bjiij-square-form}).
\end{proof}

\begin{proof}[Proof of Lemma \ref{lem-same-distribution-of-permutation}]


To get a Berry-Esseen bound for Combinatorial Central Limit Theorems, a transformation
was constructed by \cite{Goldstein_2005}, and further applied by \cite{chen2015onthe/13-BEJ569} and \cite{Liu2023cramer/23-AAP1931} to prove Berry-Esseen bound and Cram\'{e}r type moderate deviation results for combinatorial central limit theorems. Our transformation (\ref{def-transformation-sigma-prime}) is a bit different from theirs, and we use a similar train of thought from Proof of Lemma 4.5. in \cite{chen2010normal} to prove Lemma \ref{lem-same-distribution-of-permutation}. We only prove (\ref{same-distribution-of-permutation-double-transformation}), since (\ref{same-distribution-of-permutation-single-transformation}) can be proved similarly. 

Let $A_1,A_2,A_3,A_4$ denote the four cased of $(\ref{def-transformation-sigma-prime})$ in their respective order. Let $p_{m}, m\notin\{i,j\}$ be distinct and satisfy $p_{m}\notin \{k,l\}$. Under $A_1$ we have $\sigma(j) \neq k$ and $i\neq \sigma^{-1}(k)$. Hence $\sigma^{-1}(k)\notin\{i,j\}$, then we have
\begin{align}
    &P(\sigma^{ij}_{kl}(m) = p_{m}, m\notin\{i,j\}, A_{1})\notag\\
    =& P(\sigma^{ij}_{kl}(m) = p_{m}, m\notin\{i,j,\sigma^{-1}(k)\}, \sigma(i) = l,\sigma(j)\neq k,\sigma^{ij}_{kl}(\sigma^{-1}(k)) = p_{\sigma^{-1}(k)})\notag\\
    = & P(\sigma^{ij}_{kl}(m) = p_{m}, m\notin\{i,j,\sigma^{-1}(k)\}, \sigma(i) = l,\sigma(j) =  p_{\sigma^{-1}(k)})\notag\\
    = & \sum_{q\notin\{i,j\}}P(\sigma^{ij}_{kl}(m) = p_{m}, m\notin\{i,j,q\}, \sigma(i) = l,\sigma(j) =  p_{q}, \sigma(q) = k)\notag\\
    = & \sum_{q\notin\{i,j\}}P(\sigma(m) = p_{m}, m\notin\{i,j,q\}, \sigma(i) = l,\sigma(j) =  p_{q}, \sigma(q) = k)\notag\\
    = & \frac{n-2}{n!}.\notag
\end{align}
Case $A_{2}$ can be calculated similarly upon interchanging the roles of $i$ and $j$, and $k$ and $l$. So we obtain
\begin{align}
    P(\sigma^{ij}_{kl}(m) = p_{m}, m\notin\{i,j\}, A_{1}\cup A_{2}) = \frac{2(n-2)}{n!}.\notag
\end{align}
Under $A_3$, we have $\sigma(i) = l$ and $\sigma(j) = k$, therefore 
\begin{align}
    &P(\sigma^{ij}_{kl}(m) = p_{m}, m\notin\{i,j\}, A_{3})\notag\\
    = &P(\sigma^{ij}_{kl}(m) = p_{m}, m\notin\{i,j\}, \sigma(i) = l, \sigma(j) = k)\notag\\
    = & P(\sigma(m) = p_{m}, m\notin\{i,j\}, \sigma(i) = l, \sigma(j) = k)\notag\\
    = & \frac{1}{n!}.\notag    
\end{align}
Finally, under $A_4$, we divide $A_{4}$ into subcases depending on  $R = \left|\{\sigma(i),\sigma(j)\}\cap\{k,l\}\right|$, and let $A_{4r} = A_{4}\cap \{R = r\}$ for $r = 0,1,2$. If $R = 0$, applying (4.131) in \cite{chen2010normal}, we have
\begin{align}
    P(\sigma^{ij}_{kl}(m) = p_{m}, m\notin\{i,j\}, A_{40}) = \frac{(n-2)(n-3)}{n!}.\notag
\end{align}
Considering $R = 1$, by (4.132) in \cite{chen2010normal},
\begin{align}
    P(\sigma^{ij}_{kl}(m) = p_{m}, m\notin\{i,j\}, A_{41}) = \frac{2(n-2)}{n!}.\notag
\end{align}
Finally, if $R = 2$, we have $A_{42} = \{\pi(i) = k, \pi(j) = l\}$. So by interchanging $i$ and $j$ in the calculation of $A_3$, it follows that
\begin{align}
    P(\sigma^{ij}_{kl}(m) = p_{m}, m\notin\{i,j\}, A_{42}) = \frac{1}{n!}.\notag
\end{align}
Summing over all the cases, we obtain
\begin{align}
    P(\sigma^{ij}_{kl}(m) = p_{m}, m\notin\{i,j\}) = \frac{(n-2)(n-3)}{n!} + \frac{4(n-2)}{n!} + \frac{2}{n!} = \frac{1}{(n-2)!}.\label{sigma-ij-kl-all-case-probability}
\end{align}
This shows that $\sigma^{ij}_{kl}$ is uniformly distributed over the set of permutations $\tau$ such that $\tau(i) = k$ and $\tau(j) = l$. Hence
\begin{align}
    \mathcal{L}(\sigma^{ij}_{kl}) \overset{d}{=}\mathcal{L}(\pi\mid\substack{\pi(i) = k\\ \pi(j) = l}).\notag
\end{align}
Thus we complete the proof of (\ref{same-distribution-of-permutation-double-transformation}) and by a same argument we can proof (\ref{same-distribution-of-permutation-single-transformation}). Then, we prove $\mathcal{L}\left(\mathcal{P}^{pq}_{uv}\sigma^{ij}_{kl}\right)\overset{d}{=}\mathcal{L}\left(\pi\middle|\substack{\pi(i) = k\\\pi(j) = l\\\pi(p) = u\\\pi(q) = v}\right)$ in (\ref{composite-transformation}) based on the result of (\ref{same-distribution-of-permutation-double-transformation}). In the following tables, we show the values of permutation $\sigma^{ij}_{kl}$ and $\mathcal{P}^{pq}_{uv}\sigma^{ij}_{kl}$ on index $i,j,\sigma^{-1}(k),\sigma^{-1}(l)$ in several cases.
\begin{table}[H]
    \centering
    \begin{tabular}{c|cc|cc|cc|ccc|ccc}
        \toprule
        case & \multicolumn{2}{c}{for all} & \multicolumn{2}{c}{$C_{1}$} & \multicolumn{2}{c}{$C_{2}$} & \multicolumn{3}{c}{$C_{3}$} & \multicolumn{3}{c}{$C_{4}$}  \\
        \cmidrule{1-13}
        index & $i$ & $j$ & $p$ & $q$ & $p$ & $q$ & $p$ & $q$ & $\sigma^{-1}(v)$ & $p$ & $q$ & $\sigma^{-1}(u)$ \\
        \cmidrule{1-13}
        $\sigma^{ij}_{kl}(\text{index})$ & $k$ & $l$ & $u$ & $v$ & $v$ & $u$ & $u$ & $\sigma(q)$ & $v$ & $\sigma(p)$ & $v$ & $u$ \\
        \cmidrule{1-13}
        $\mathcal{P}^{pq}_{uv}\sigma^{ij}_{kl}(\text{index})$ & $k$ & $l$ & $u$ & $v$ & $u$ & $v$ & $u$ & $v$ & $\sigma(q)$ & $u$ & $v$ & $\sigma(p)$ \\
        \bottomrule
    \end{tabular}
\end{table}
\begin{table}[H]
    \centering
    \begin{tabular}{c|ccc|ccc|cccc}
        \toprule
        case & \multicolumn{3}{c}{$C_{5}$} & \multicolumn{3}{c}{$C_{6}$} & \multicolumn{4}{c}{$C_{7}$} \\
        \cmidrule{1-11}
        index & $p$ & $q$ & $\sigma^{-1}(v)$ & $p$ & $q$ & $\sigma^{-1}(u)$ & $p$ & $q$ & $\sigma^{-1}(u)$ & $\sigma^{-1}(v)$ \\
        \cmidrule{1-11}
        $\sigma^{ij}_{kl}(\text{index})$ & $\sigma(p)$ & $u$ & $v$ & $v$ & $\sigma(q)$ & $u$ & $\sigma(p)$ & $\sigma(q)$ & $u$ & $v$ \\
        \cmidrule{1-11}
        $\mathcal{P}^{pq}_{uv}\sigma^{ij}_{kl}(\text{index})$ & $u$ & $v$ & $\sigma(p)$ & $u$ & $v$ & $\sigma(q)$ & $u$ & $v$ & $\sigma(p)$ & $\sigma(q)$ \\
        \bottomrule
    \end{tabular}
\end{table}
Since $\Omega  = \cup_{i = 1}^{7}C_{i}$ and $C_{i}, i = 1,\dots,7$ are disjoint events, therefore we have
\begin{align}
    P\left(\mathcal{P}^{pq}_{uv}\sigma^{ij}_{kl}(m) = t_{m}, m\notin\{i,j,p,q\}\right) = \sum_{i = 1}^{7}P\left(\mathcal{P}^{pq}_{uv}\sigma^{ij}_{kl}(m) = t_{m}, m\notin\{i,j,p,q\}, C_{i}\right),\label{C1-C7-decomposition}
\end{align}
where $t_{m}, m\notin\{i,j,p,q\}$ are distinct and satisfy $t_{m}\notin\{k,l,u,v\}$. Then we calculate (\ref{C1-C7-decomposition}) case by case. Recalling (\ref{sigma-ij-kl-all-case-probability}), under $C_{1}$ it follows that
\begin{align}
    P\left(\mathcal{P}^{pq}_{uv}\sigma^{ij}_{kl}(m) = t_{m}, m\notin\{i,j,p,q\}, C_{1}\right) = & P\left(\sigma^{ij}_{kl}(m) = t_{m}, m\notin\{i,j,p,q\}, \substack{\sigma^{ij}_{kl}(p) = u\\ \sigma^{ij}_{kl}(q) = v} \right) = \frac{1}{(n-2)!}.\label{probability-under-C1}
\end{align}
By interchanging $p$ and $q$, we have under $C_{2}$,
\begin{align}
    P\left(\mathcal{P}^{pq}_{uv}\sigma^{ij}_{kl}(m) = t_{m}, m\notin\{i,j,p,q\}, C_{2}\right) = \frac{1}{(n-2)!}. \label{probability-under-C2}
\end{align}
Then we calculate the probability under $C_{3}$ and $C_{5}$ as
\begin{align}
    &P\left(\mathcal{P}^{pq}_{uv}\sigma^{ij}_{kl}(m) = t_{m}, m\notin\{i,j,p,q\}, C_{3}\right)\notag\\
    = & \sum_{r\notin\{i,j,p,q\}}P\left(\sigma^{ij}_{kl}(m) = t_{m},m\notin\{i,j,p,q,r\}, \substack{\sigma^{ij}_{kl}(p) = u\\ \sigma^{ij}_{kl}(q) = t_{r}\\ \sigma^{ij}_{kl}(r) = u}  \right) =  \frac{n-4}{(n-2)!},\label{probability-under-C3}
\end{align}
and
\begin{align}
    &P\left(\mathcal{P}^{pq}_{uv}\sigma^{ij}_{kl}(m) = t_{m}, m\notin\{i,j,p,q\}, C_{5}\right)\notag\\
    = & \sum_{r\notin\{i,j,p,q\}}P\left(\sigma^{ij}_{kl}(m) = t_{m},m\notin\{i,j,p,q,r\}, \substack{\sigma^{ij}_{kl}(q) = u,\\ \sigma^{ij}_{kl}(p) =t_{r}\\ \sigma^{ij}_{kl}(r) = v} \right) = \frac{n-4}{(n-2)!}.\label{probability-under-C5}
\end{align}
Case $C_{4}$ and $C_{6}$ can be calculated similarly upon interchanging the roles of $p$ and $q$, and $u$ and $v$. So we obtain
\begin{align}
    P\left(\mathcal{P}^{pq}_{uv}\sigma^{ij}_{kl}(m) = t_{m}, m\notin\{i,j,p,q\}, C_{4}\right) = \frac{n-4}{(n-2)!},\label{probability-under-C4}\\
    P\left(\mathcal{P}^{pq}_{uv}\sigma^{ij}_{kl}(m) = t_{m}, m\notin\{i,j,p,q\}, C_{6}\right) = \frac{n-4}{(n-2)!}.\label{probability-under-C6}
\end{align}
Then we calculate the probability under $C_{7}$ as follows
\begin{align}
    &P\left(\mathcal{P}^{pq}_{uv}\sigma^{ij}_{kl}(m) = t_{m}, m\notin\{i,j,p,q\}, C_{7}\right)\notag\\
    = & \sum_{\substack{r,s\notin\{i,j,p,q\}\\ r\neq s}}P\left(\sigma^{ij}_{pq}(m) = t_{m},m\notin\{i,j,p,q,r,s\}, \substack{\sigma^{ij}_{kl}(p) = t_{r}, \sigma^{ij}_{kl}(q) = t_{s}\\\sigma^{ij}_{kl}(r) = u, \sigma^{ij}_{kl}(s) = v} \right) = \frac{(n-4)(n-5)}{(n-2)!}.\label{probability-under-C7}
\end{align}
Combining (\ref{C1-C7-decomposition})-(\ref{probability-under-C7}), we deduce that
\begin{align}
    P\left(\mathcal{P}^{pq}_{uv}\sigma^{ij}_{kl}(m) = t_{m}, m\notin\{i,j,p,q\}\right)
    = & \frac{2}{(n-2)!} + \frac{4(n-4)}{(n-2)!} + \frac{(n-4)(n-5)}{(n-2)!} = \frac{1}{(n-4)!}.\notag
\end{align}
It implies that
\begin{align}
    \mathcal{L}\left(\mathcal{P}^{pq}_{uv}\sigma^{ij}_{kl}\right) \overset{d}{=}\mathcal{L}\left(\pi\middle|\substack{\pi(i) = k\\ \pi(j) = l\\pi(p) = u\\\pi(q) =v}\right).\notag
\end{align}

Next we prove $\mathcal{L}\left(\mathcal{P}^{p}_{q}\sigma^{ij}_{kl}\right) \overset{d}{=}\mathcal{L}\left(\pi\middle|\substack{\pi(i) = k\\\pi(j) = l\\\pi(p) = q}\right)$ in (\ref{composite-transformation}). We define $q_m$, $m\notin\{i,j,p\}$ are distinct and satisfy $q_m\notin\{k,l,q\}$. Then by (\ref{sigma-ij-kl-all-case-probability}), we have
\begin{align}
    P\left(\mathcal{P}^{p}_{q}\sigma^{ij}_{kl}(m) = q_{m}, m\notin\{i,j,p\}, \sigma^{ij}_{kl}(p) = q\right) = P\left(\sigma^{ij}_{kl}(m) = p_{m}, m\notin\{i,j,p\}, \sigma^{ij}_{kl}(p) = q\right) = \frac{1}{(n-2)!},\notag
\end{align}
and
\begin{align}
    &P\left(\mathcal{P}^{p}_{q}\sigma^{ij}_{kl}(m) = q_{m}, m\notin\{i,j,p\}, \sigma^{ij}_{kl}(p) \neq q\right)\notag\\
    = &\sum_{r\notin\{i,j,p\}}P\left(\sigma^{ij}_{kl}(m) = p_{m}, m\notin\{i,j,p,r\}, \sigma^{ij}_{kl}(r) = q, \sigma^{ij}_{kl}(p) = q_{r}\right) = \frac{n-3}{(n-2)!}.\notag 
\end{align}
Therefore,
\begin{align}
    P\left(\mathcal{P}^{p}_{q}\sigma^{ij}_{kl}(m) = q_{m}, m\notin\{i,j,p\}\right) =\frac{1}{(n-2)!} + \frac{n-3}{(n-2)!} = \frac{1}{(n-3)!}.\notag
\end{align}
It implies that
\begin{align}
    \mathcal{L}\left(\mathcal{P}^{p}_{q}\sigma^{ij}_{kl}\right) \overset{d}{=}\mathcal{L}\left(\pi\middle|\substack{\pi(i) = k\\ \pi(j) = l\\ \pi(p) = q}\right).\notag
\end{align}
Thus, we complete the proof of Lemma \ref{lem-same-distribution-of-permutation}.
\end{proof}

\begin{proof}[Proof of Lemma \ref{bound-H1-H4-general}]
Define $\sigma^{i_{1},\dots,i_{k}}_{l_{1},\dots,l_{k}}$ and $S_{\sigma^{i_{1},\dots,i_{k}}_{l_{1},\dots,l_{k}}}$ as
\begin{align}
    \sigma^{i_{1},\dots,i_{k}}_{l_{1},\dots,l_{k}} = \begin{cases}
        \mathcal{P}^{i_{k},i_{k-1}}_{l_{k},l_{k-1}}\circ \cdots\circ\mathcal{P}^{i_{1},i_{2}}_{l_{1},l_{2}}\circ\sigma, & k \text{ is even},\\
        \mathcal{P}^{i_{k}}_{l_{k}}\circ\mathcal{P}^{i_{k-1},i_{k-2}}_{l_{k-1},l_{k-2}}\circ \cdots\circ\mathcal{P}^{i_{1},i_{2}}_{l_{1},l_{2}}\circ\sigma, & k \text{ is odd},
    \end{cases}\notag
\end{align}
and
\begin{align}
    S_{\sigma^{i_{1},\dots,i_{k}}_{l_{1},\dots,l_{k}}} = & \sum_{p  =1}^{n}a_{p\sigma^{i_{1},\dots,i_{k}}_{l_{1},\dots,l_{k}}(p)} + \sum_{p\neq q}b_{pq\sigma^{i_{1},\dots,i_{k}}_{l_{1},\dots,l_{k}}(p)\sigma^{i_{1},\dots,i_{k}}_{l_{1},\dots,l_{k}}(q)},\notag
\end{align}
where $\sigma$ is a random permutation chosen uniformly from $S_{n}$ and independent of $\pi$, $\mathcal{P}_{k}^{i}$ and $\mathcal{P}_{kl}^{ij}$ are defined as (\ref{def-transformation-single}) and (\ref{def-transformation-sigma-prime}). Since $i_{1},\dots,i_{k}\in[n]$ are all distinct, and $l_{1},\dots,l_{k}\in[n]$ are also all distinct, when $k$ is even, by applying (\ref{same-distribution-of-permutation-double-transformation}) in Lemma \ref{lem-same-distribution-of-permutation} $k/2$ times, we have
\begin{align}
    \mathcal{L}\left(\sigma^{i_{1},\dots,i_{k}}_{l_{1},\dots,l_{k}}\right) \overset{d}{=} \mathcal{L}\left(\pi\middle|\substack{\pi(i_{1}) = l_{1}\\...\\\pi(i_{k}) = l_{k}}\right),\label{general-same-distribution-of-permutation-k}
\end{align}
when $k$ is odd, by applying (\ref{same-distribution-of-permutation-single-transformation}) once and (\ref{same-distribution-of-permutation-double-transformation}) $(k-1)/2$ times, we also have 
(\ref{general-same-distribution-of-permutation-k}). Then it follows by definition that
\begin{align}
    \mathcal{L}\left(S_{\sigma^{i_{1},\dots,i_{k}}_{l_{1},\dots,l_{k}}}\right) \overset{d}{=} \mathcal{L}\left(W_n\middle|\substack{\pi(i_{1}) = l_{1}\\...\\\pi(i_{k}) = l_{k}}\right).\label{general-same-distribution-of-estimator-k}
\end{align}
Denote $ \left|\mathbb{E}\left\{\Psi_{t}(W_{n})\middle|\substack{\pi(i_{1}) = l_{1}\\...\\\pi(i_{k}) = l_{k}}\right\} - h(t)\right|$ as $H_{\substack{i_{1},\dots,i_{k}\\ l_{1},\dots,l_{k} }}$, by (\ref{general-same-distribution-of-estimator-k}) it follows that
\begin{align}
    H_{\substack{i_{1},\dots,i_{k}\\ l_{1},\dots,l_{k} }} = & \left|\mathbb{E}\left\{\Psi_{t}\left(S_{\sigma^{i_{1},\dots,i_{k}}_{l_{1},\dots,l_{k}}}\right)\right\} - h(t)\right|.\notag
\end{align}
Then we do the Taylor expansion of $e^{tx}$ at $x = T_{n} = \sum_{i = 1}^{n}a_{i\sigma(i)} + \sum_{i\neq j}b_{ij\sigma(i)\sigma(j)}$, we obtain
\begin{align}
    \Psi_{t}(S_{\sigma^{i_{1},\dots,i_{k}}_{l_{1},\dots,l_{k}}}) = \Psi_{t}(T_{n}) + (S_{\sigma^{i_{1},\dots,i_{k}}_{l_{1},\dots,l_{k}}} - T_{n})\mathbb{E}\left\{\Psi_{t}^{\prime}\left(T_{n}+U\left(S_{\sigma^{i_{1},\dots,i_{k}}_{l_{1},\dots,l_{k}}} - T_{n}\right)\right)\right\}, \notag
\end{align}
where $U$ is a $U[0,1]$ random variable independent of any other random variables, therefore
\begin{align}
    H_{\substack{i_{1},\dots,i_{k}\\ l_{1},\dots,l_{k} }} = & \left|\mathbb{E}\left\{\Psi_{t}(T_{n})\right\} - h(t)\right| + t\left|\mathbb{E}\left\{V\Psi_{t}(T_{n}+UV)\right\}\right| = t\left|\mathbb{E}\left\{V\Psi_{t}(T_{n}+UV)\right\}\right|,\notag
\end{align} 
where $V = S_{\sigma^{i_{1},\dots,i_{k}}_{l_{1},\dots,l_{k}}} - T_{n}$. By the definition of $S_{\sigma^{i_{1},\dots,i_{k}}_{l_{1},\dots,l_{k}}}^{(i_{1},\dots,i_{k})}$ and $T_{n}$ and the condition (\ref{equ-boundedness-condition-DIPS}), we have
\begin{align}
    |V| \leq  & \sum_{m = 1}^{k}\left|a_{i_{m}\sigma(i_{m})}\right| + \sum_{m = 1}^{k}\left|a_{\sigma^{-1}(l_{m})l_{m}}\right| + \sum_{m =1}^{k}\left|a_{i_{m}\sigma^{(i_{1},\dots,i_{k})}_{l_{1},\dots,l_{k}}(i_{m})}\right|+ \sum_{m = 1}^{k}\left|a_{\left(\sigma^{(i_{1},\dots,i_{k})}_{l_{1},\dots,l_{k}}\right)^{-1}(l_{m})l_{m}}\right|\notag\\
    & + \sum_{m = 1}^{k}\sum_{s\notin\{i_{1},\dots,i_{m}\}}\left|b_{is\sigma(i)\sigma(s)}+b_{si\sigma(s)\sigma(i)}\right|\notag\\
    & + \sum_{m = 1}^{k}\sum_{s\notin\{i_{1},\dots,i_{m}\}}\left|b_{is\sigma^{(i_{1},\dots,i_{k})}_{l_{1},\dots,l_{k}}(i)\sigma^{(i_{1},\dots,i_{k})}_{l_{1},\dots,l_{k}}(s)}+b_{si\sigma^{(i_{1},\dots,i_{k})}_{l_{1},\dots,l_{k}}(s)\sigma^{(i_{1},\dots,i_{k})}_{l_{1},\dots,l_{k}}(i)}\right| \notag\\ 
    & + \sum_{m = 1}^{k}\sum_{s\notin\{i_{1},\dots,i_{k},\sigma^{-1}(l_{1}),\dots,\sigma^{-1}(l_{m})\}}\left|b_{s\sigma^{-1}(l_{m})\sigma(s)l_{m}}+b_{\sigma^{-1}(l_{m})sl_{m}\sigma(s)}\right|\notag\\
    & + \sum_{m = 1}^{k}\sum_{s\notin\{i_{1},\dots,i_{k},\sigma^{-1}(l_{1}),\dots,\sigma^{-1}(l_{m})\}}\left|b_{s\sigma^{-1}(l_{m})\sigma^{(i_{1},\dots,i_{k})}_{l_{1},\dots,l_{k}}(s)\sigma^{(i_{1},\dots,i_{k})}_{l_{1},\dots,l_{k}}\left(\sigma^{-1}(l_{m})\right)}\right|\notag\\
    & + \sum_{m = 1}^{k}\sum_{s\notin\{i_{1},\dots,i_{k},\sigma^{-1}(l_{1}),\dots,\sigma^{-1}(l_{m})\}}\left|b_{\sigma^{-1}(l_{m})s\sigma^{(i_{1},\dots,i_{k})}_{l_{1},\dots,l_{k}}\left(\sigma^{-1}(l_{m})\right)\sigma^{(i_{1},\dots,i_{k})}_{l_{1},\dots,l_{k}}(s)}\right|\notag\\
    \leq & Ck\delta.\notag
\end{align}
Recalling $0<t<1/\delta$, it follows that
\begin{align}
    H_{\substack{i_{1},\dots,i_{k}\\ l_{1},\dots,l_{k} }} \leq & Cke^{t\delta k}t\delta h(t) \leq Cke^{k}t\delta h(t).\label{lem-bound-H1-H4-general-bound-1}
\end{align}
This completes the proof of Lemma \ref{bound-H1-H4-general}.

\end{proof}

\begin{acks}[Acknowledgments]
    Liu S.H. was partially supported by the Fundamental Research Funds for the Central Universities DUT25RC(3)133.
\end{acks}

\bibliographystyle{apalike}
\bibliography{refs.bib}

\end{document}